\newtheorem{thm}{Theorem}[section]
\newtheorem{prop}[thm]{Proposition}
\newtheorem{lem}[thm]{Lemma}
\newtheorem{cor}[thm]{Corollary}
\newtheorem{question}[thm]{Question}
\theoremstyle{remark}
\newtheorem{rem}[thm]{Remark}
\newtheorem{exa}[thm]{Example}
\theoremstyle{definition}
\newtheorem{defi}[thm]{Definition}
\newcommand{\Q}{\mathbb{Q}}
\newcommand{\R}{\mathbb{R}}
\newcommand{\N}{\mathbb{N}}
\newcommand{\bb}{\partial}
\DeclareMathOperator{\smear}{smear}
\DeclareMathOperator{\im}{im}
\DeclareMathOperator{\Isom}{Isom}
\DeclareMathOperator{\vol}{vol}
\DeclareMathOperator{\sgn}{sgn}
\DeclareMathOperator{\id}{id}
\DeclareMathOperator{\map}{map}
\def\epsilon{\varepsilon}
\def\hyp{\mathbb{H}}
\DeclareMathOperator{\diam}{diam}
\DeclareMathOperator{\Top}{{\sf Top}}
\DeclareMathOperator{\Topp}{{\sf Top}^{2}}
\DeclareMathOperator{\Toppr}{{\sf Top}_{{\sf p}}}
\DeclareMathOperator{\Ob}{Ob}
\DeclareMathOperator{\nCh}{{\sf Ch}_{\R}^{{\sf n}}}
\DeclareMathOperator{\snCh}{{\sf Ch}_{\R}^{{\sf sn}}}
\DeclareMathOperator{\pnCh}{{\sf Ch}_{\R}^{{\sf sn},\infty}}
\def\args{\;\cdot\;}
\def\fa#1{%
  \forall_{#1}\;\;\;}
\def\sv#1{%
  \| #1 \|}
\def\svlf#1{%
  \| #1 \|_{\mathrm{lf}}}
\def\gsv#1{%
  \| #1 \|}
\def\gsvlf#1{%
  \| #1 \|_{\mathrm{lf}}}
\def\qsv#1{%
  \| #1 \|^{\square}}
\def\qsvlf#1{%
  \| #1 \|^{\square}_{\mathrm{lf}}}
\def\glone#1{%
  \| #1 \|}
\def\gclone#1{%
  | #1 |}
\def\qclone#1{%
  | #1 |_1^\square}
\def\clone#1{%
  | #1 |_1
}
\def\cdlone#1{%
  | #1 |_1^\triangle%
}
\def\fclr#1{%
  [#1]_\R}
\def\qfclr#1{%
  [#1]_\R^\square}
\def\fclrlf#1{%
  [#1]_\R^{\mathrm{lf}}}
\def\draftinfo{}
\author{Clara L\"oh}
\address{Fakult\"at f\"ur Mathematik\\
         Universit\"at Regensburg\\
         93040~Regensburg\\
         }
\email{clara.loeh@mathematik.uni-r.de}
\author{Cristina Pagliantini}
\address{Department Mathematik\\
         ETH Zentrum\\
         8092~Z\"urich\\
        }
\email{cristina.pagliantini@math.ethz.ch}
\author{Sebastian Waeber}
\address{Fakult\"at f\"ur Mathematik\\
         Universit\"at Regensburg\\
         93040~Regensburg\\
         }
\email{sebastian.waeber@stud.uni-r.de}
\title{Cubical simplicial volume of $3$-manifolds}
\date{\today.\ \copyright{\ C.~L\"oh, C.~Pagliantini, S.~Waeber 2015}. 
    This work was supported by the CRC~1085 \emph{Higher Invariants} 
    (Universit\"at Regensburg, funded by the DFG) and is related to the 
    Master thesis project of the third author. Moreover, C.L.\ is grateful 
    to the FIM at ETH Z\"urich for its hospitality. C.P.\ was supported by Swiss National
Science Foundation project 144373.\\
    MSC~2010 classification: 55N10, 57M27, 57N65 \draftinfo
}
\begin{document}

\begin{abstract}
  We prove that cubical simplicial volume of oriented closed 
  $3$-manifolds is equal to one fifth of ordinary simplicial volume.
\end{abstract}

\maketitle

\section{Introduction}

Simplicial volumes describe how difficult it is to represent the 
fundamental class of a given manifold in terms of cycles. Classically, 
simplicial volume was introduced by Gromov in terms of singular homology 
(defined via singular simplices)~\cite{vbc,mapsimvol}. Similarly, cubical 
simplicial volume is defined in terms of cubical singular homology. 

Gromov asked whether simplicial volume and cubical simplicial volume
are proportional in every dimension~\cite[5.40]{gromov}. This clearly
holds in dimension~$0$ and~$1$, and is known to be true for
surfaces~\cite{loehplankl}.

We will prove that cubical simplicial volume  $\qsv \cdot$ and ordinary simplicial volume $\sv \cdot$ 
are proportional in dimension~$3$:

\begin{thm}\label{mainthm}
  Let $M$ be an oriented closed $3$-manifold. Then 
  \[ \qsv M = \frac15 \cdot \sv M. 
  \]
\end{thm}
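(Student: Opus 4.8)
The plan is to prove the two inequalities $\qsv M\le\frac15\sv M$ and $\qsv M\ge\frac15\sv M$ separately, and in both cases to reduce to the hyperbolic situation via geometrization. First I would check that cubical simplicial volume, like ordinary simplicial volume, is additive under connected sums of $3$-manifolds and under gluing along incompressible tori, and vanishes on closed $3$-manifolds admitting a non-hyperbolic Thurston geometry (in particular on graph manifolds); the vanishing follows from the elementary estimate $\qsv{\cdot}\le 4\sv{\cdot}$ --- obtained by subdividing each affine $3$-simplex barycentrically into four cubes --- together with the vanishing of $\sv{\cdot}$, while the gluing formulae require an $\ell^1$-Mayer--Vietoris argument for cubical chains and the amenability of $\pi_1$ of a torus. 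Since $\sv M=\frac1{v_3}\sum_i\vol(M_i)$, where $v_3$ is the volume of the regular ideal tetrahedron in $\hyp^3$ and the $M_i$ are the hyperbolic pieces of the JSJ decomposition of $M$ (Gromov, Soma, Thurston), the theorem reduces to the single statement $\qsv N=\vol(N)/(5v_3)$ for every complete finite-volume hyperbolic $3$-manifold $N$; in the cusped case $\qsv{\cdot}$ is to be read in its locally finite version $\qsvlf{\cdot}$, exactly as $\svlf{\cdot}$ is used in Gromov's treatment of the classical proportionality principle.

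\textbf{Lower bound.} For the inequality $\qsv N\ge\vol(N)/(5v_3)$ I would imitate the Gromov--Thurston argument. Let $z=\sum_j a_j\sigma_j$ be a (locally finite) cubical fundamental cycle of $N=\hyp^3/\Gamma$, lift it to $\hyp^3$, and apply the cubical straightening operator $\str$: it is $\Isom(\hyp^3)$-equivariant and chain homotopic to the identity, so since the hyperbolic volume form $\omega_{\vol}$ is closed one obtains
\[
   \vol(N)=\langle\omega_{\vol},z\rangle=\langle\omega_{\vol},\str z\rangle
          =\sum_j a_j\int_{[0,1]^3}(\str\sigma_j)^*\omega_{\vol}.
\]
The heart of the matter is the geometric estimate that a straightened singular $3$-cube in $\hyp^3$ has volume at most $5v_3$, i.e.\ $\bigl|\int_{[0,1]^3}(\str\sigma)^*\omega_{\vol}\bigr|\le 5v_3$ for every singular $3$-cube $\sigma$; granting this, $\vol(N)\le 5v_3\cdot\qsv N$, which is the desired bound. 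To prove the estimate I would decompose the domain cube along the minimal (five-tetrahedra) triangulation and analyse the corresponding configurations of eight ideal points in $\partial\hyp^3$: the naive decomposition of a cube into six tetrahedra around its space diagonal only yields the weaker bound $6v_3$, and the improvement to $5v_3$ must exploit the extra relations that straightening forces on the eight vertices (they rule out the all-regular configuration), with the five-tetrahedra picture providing the true extremum $5v_3$.

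\textbf{Upper bound.} For the matching inequality $\qsv N\le\vol(N)/(5v_3)$ I would carry Gromov's smearing construction over to cubical chains. Fix a \emph{regular ideal cube} $Q$ in $\hyp^3$ --- an ideal $3$-cube of volume $5v_3$ realising the supremum of the previous step --- chosen so that its three pairs of opposite faces are isometric (the central symmetry of the combinatorial cube being realised by an orientation-reversing isometry of $\hyp^3$), or, if the extremum is only approached, a sequence of such cubes with volumes tending to $5v_3$. Smearing the orbit of $Q$ over $G=\Isom(\hyp^3)$ with respect to Haar measure and a fundamental domain for $\Gamma$, and projecting to $N$, produces a cubical cycle $\smear(Q)$; since opposite faces of $Q$ are isometric, the front and back faces of $\smear(Q)$ cancel in the cubical boundary, so that --- after the standard normalisation and after passing from measure chains to ordinary cubical chains via the isometric comparison of the two homology theories --- $\smear(Q)$ is a cubical fundamental cycle of $N$ of $\ell^1$-norm $\vol(N)/\vol(Q)=\vol(N)/(5v_3)$. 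In the cusped case one modifies the smearing cycle near the cusps, controlling the diameters of the cubes introduced there, so as to obtain a genuine locally finite fundamental cycle without affecting the norm in the limit, exactly as in the simplicial case.

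\textbf{The main obstacle.} The crux of the whole proof is the sharp constant $5v_3$: one must prove both that \emph{every} straightened $3$-cube has volume $\le 5v_3$ and that this value is attained, or at least approached, by cubes with isometric opposite faces that can be smeared. In the surface case the analogous constant is forced by Gauss--Bonnet, since every ideal geodesic quadrilateral has area exactly $2\pi=2v_2$; in dimension $3$, however, the volume of a straight ideal cube is not determined by combinatorics, so identifying $5v_3$ as the exact extremum --- and locating a sufficiently symmetric extremal cube --- is a genuine geometric optimisation problem, and it is here that I expect the real work to lie. The remaining ingredients --- additivity of $\qsv{\cdot}$ under connected sum and torus gluing, the $\ell^1$-Mayer--Vietoris and amenability estimates, the comparison between measure and singular cubical homology, and the bookkeeping in the cusped case --- are adaptations of tools already available for ordinary simplicial volume.
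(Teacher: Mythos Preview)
Your upper bound argument is essentially the paper's: smear a (nearly) regular ideal geodesic cube of volume~$5v_3^\triangle$ to get efficient cubical fundamental cycles of the hyperbolic pieces, then use sub-additivity under torus gluings (via amenability/UBC) and vanishing on Seifert pieces. The paper packages the gluing step in a general ``geometric normed model'' framework, but the geometric content is what you describe, and the identification~$v_3^\square=5v_3^\triangle$ is exactly Coxeter's decomposition of the regular ideal cube into five regular ideal tetrahedra.

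The lower bound is where your plan diverges from the paper, and where it has a genuine gap. The paper does \emph{not} reduce the lower bound to hyperbolic geometry at all. Instead it constructs, for every space, a natural chain map $\varphi\colon C_*^\square(\,\cdot\,;\R)\Rightarrow C_*(\,\cdot\,;\R)$ with $\|\varphi_3\|\le 5$, by writing down the five-tetrahedra triangulation of the model cube~$\square^3$ and then symmetrising over~$\Isom(\square^3)$ to force compatibility with the boundary operators. This single combinatorial construction yields $\sv{M}\le 5\,\qsv{M}$ for \emph{every} oriented closed $3$-manifold directly---no geometrization, no straightening, no additivity required.

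Your straightening route, by contrast, would require the sharp inequality~$w_3^\square\le 5v_3^\triangle$ on the volume of an arbitrary \emph{straight} (iterated-geodesic) $3$-cube in~$\hyp^3$, not merely a \emph{geodesic} one (whose faces lie in hyperplanes). The paper carefully distinguishes these two suprema, $v_n^\square$ and $w_n^\square$, and only establishes~$v_3^\square=5v_3^\triangle$; the relation between them is left open. Your suggested attack---restricting the straight-cube parametrisation to a five-tetrahedra triangulation of the domain---does not work as stated, because the restriction of the iterated-geodesic map to a tetrahedron in~$\square^3$ is \emph{not} a geodesic simplex in~$\hyp^3$, so you cannot bound each piece by~$v_3^\triangle$; even the ``naive'' bound~$6v_3^\triangle$ is not clear this way. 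On top of that, reducing the lower bound to the hyperbolic pieces would require \emph{super}-additivity of~$\qsv{\,\cdot\,}$ under torus gluings, which the paper never proves (only sub-additivity is established). So the paper's elementary chain-map argument is both simpler and sidesteps two real obstacles in your plan; what you flag as ``the main obstacle'' is in fact not needed.
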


In higher dimensions, the corresponding question is still an open problem.

As for ordinary simplicial volume, the computation of cubical
simplicial volume in dimension~$3$ is based on decomposing the
manifold in question into hyperbolic and Seifert fibred
pieces. Therefore, the main steps of the proof are as follows:
\begin{itemize}
  \item We establish the general lower bound~$\qsv M \geq 1/5 \cdot \sv M$ 
    by subdividing cubes into five simplices 
    (Section~\ref{subsec:subdivision}).
  \item We show that Seifert fibred pieces have cubical simplicial 
    volume equal to~$0$ (Section~\ref{sec:seifert}).
  \item We show that hyperbolic pieces have cubical simplicial volume
    equal to one fifth of ordinary simplicial volume
    (Section~\ref{sec:hyp}).
  \item We prove a suitable (sub-)additivity for cubical simplicial volume 
    under gluings along tori (Section~\ref{sec:glue}).
\end{itemize}

More precisely, we will prove more general versions of all these steps: 

We develop a framework that allows for a convenient translation from
ordinary simplicial volume to other, generalised, simplicial volumes,
formulated in terms of so-called normed models of the singular chain
complex (see Definition~\ref{def:gnsch}). All normed models lead to
simplicial volumes that only differ by a multiplicative gap
(Proposition~\ref{prop:equivalence}). In particular, we discuss the
inheritance of vanishing of generalised simplicial volumes, which
yields the vanishing for Seifert fibred pieces as a special case
(Proposition~\ref{prop:vanishing},
Corollary~\ref{cor:seifert}). Furthermore, we show that simplicial
volumes associated with so-called geometric normed models satisfy a
sub-additivity with respect to gluings along UBC~boundaries
(Theorem~\ref{thm:glue}); this includes, in particular, manifolds
whose boundary components are tori. Hence, for all geometric normed
models we obtain an upper bound for the corresponding generalised
simplicial volume of $3$-manifolds in terms of their hyperbolic pieces
(Theorem~\ref{mainthmgen}).

Because there exist cubical normed models
(Proposition~\ref{prop:cubex}), we can apply these observations to
cubical simplicial volume. In addition, we investigate more efficient
cubical normed models in dimension~$3$, which give the desired lower
bounds in dimension~$3$ (Corollary~\ref{cor:3lowerbound}). 

Concerning the hyperbolic pieces, we calculate cubical simplicial
volume for oriented hyperbolic manifolds of finite volume in
dimension~$3$ (Corollary~\ref{cor:hyp3}) and give general, geometric
lower and upper bounds for the proportionality constant in all
dimensions (Section~\ref{sec:hyp}).

\subsection*{Organisation of this article}

In Section~\ref{sec:sv}, we recall the definition of classical
simplicial volume and cubical simplicial volume. The language of
normed models is developed in Section~\ref{sec:models} and cubical
models are studied in
Section~\ref{sec:cubicalmodels}. Section~\ref{sec:seifert} treats the
Seifert case and Section~\ref{sec:hyp} treats the hyperbolic case. In
Section~\ref{sec:glue}, we prove sub-additivity of generalised
simplicial volumes under certain gluings, and Section~\ref{sec:proof}
contains the proof of Theorem~\ref{mainthm} and its generalisation to
geometric normed models.

\subsection*{Acknowledgements}

We are very grateful to Michelle Bucher and Roberto Frigerio for
numerous helpful discussions.

\section{Simplicial volume and cubical simplicial volume}\label{sec:sv}

We start with a review of Gromov's simplicial
volume~\cite{vbc,mapsimvol} and some basic properties, as well as the
definition of cubical simplicial volume.

\subsection{Classical simplicial volume}

We denote the singular chain complex with $\R$-coefficients, based on
singular simplices, by~$C_*(\args;\R)$ and the corresponding singular
homology by~$H_*(\args;\R)$. 

Let $M$ be an oriented compact $n$-manifold (with possibly
empty boundary~$\partial M$). The corresponding $\R$-fundamental class
in~$H_n(M,\partial M;\R)$ is denoted by~$\fclr{M,\partial M}$. If
$\partial M = \emptyset$, then we write~$\fclr M$ instead.

\begin{defi}[simplicial volume, absolute case]
  The \emph{simplicial volume} of an oriented closed $n$-manifold
  is defined as
  \[ \sv M := \inf \bigl\{ \clone c 
                   \bigm| c \in C_n(M;\R), \partial c = 0, [c] = \fclr M 
                          \in H_n(M;\R) 
                    \bigr\},
                    \]
  where $\clone{\cdot}$ denotes the $\ell^1$-norm on~$C_n(M;\R)$ with respect 
  to the basis of singular $n$-simplices in~$M$.
\end{defi}

In general, the explicit computation of simplicial volume is rather
hard. Known results include the calculation of simplicial volume of
hyperbolic manifolds~\cite{vbc,thurston} and for manifolds that are
locally isometric to the product of two hyperbolic
planes~\cite{bucher}.

\subsection{Additivity of simplicial volume}

In dimension~$3$, simplicial volume can be computed by
cutting the manifold into smaller, well understood, pieces. To this
end, it is necessary to consider the simplicial volume of manifolds
with boundary and of open manifolds and to prove additivity under 
certain gluings. We now recall this calculation in more detail:

\begin{defi}[simplicial volume, relative case]\label{def:relsv}
The \emph{(relative) simplicial volume} of an oriented compact
manifold~$M$ is defined as
$$
\sv{M,\partial M}:=\inf\{\clone{c} \,|\, c\in C_n(M,\partial M;\R),\partial c =0 , [c]=[M,\partial M]_\R\}, 
$$
where $\clone{\cdot}$ denotes the quotient norm on~$C_*(M,\partial M;\R)$ 
induced by the $\ell^1$-norm on~$C_*(M;\R)$.
\end{defi}

If $M$ is an oriented manifold without boundary, the
fundamental class and the simplicial volume of $M$ admit analogous
definitions in the context of locally finite
homology~\cite{vbc,loehphd}.

\begin{defi}[simplicial volume, non-compact case]\label{def:lfsv}
  The \emph{(locally finite) simplicial volume} of an open $n$-manifold~$M$ is
  \begin{align*}
  \svlf{M} :=\inf\bigl\{\clone{c} 
                    \bigm| c\in C_n^{\mathrm{lf}}(M;\R), \partial c=0, [c]= 
                    \fclrlf M \in H_n^{\mathrm{lf}}(M;\R)
                    \bigr\}
  \in [0,\infty],
  \end{align*}
  where $\fclrlf M$ denotes the fundamental class of~$M$ in locally finite
  homology $H_n^{\mathrm{lf}}(M;\R)$ and $\clone \cdot$ denotes the $\ell^1$-norm on locally
  finite chains $C_n^{\mathrm{lf}}(M;\R)$.
\end{defi}

For example, an analysis of boundaries [or restrictions] of
relative [or locally finite] fundamental cycles gives the following
restrictions on simplicial volume of the
boundary~\cite{vbc,loehphd,loehl1}:

\begin{prop}[simplicial volume of the boundary]\label{prop:svboundary}
  Let $M$ be an oriented compact $n$-manifold and let $M^\circ := M
  \setminus \partial M$.
  \begin{enumerate}
    \item Then $\sv{\partial M} \leq (n+1) \cdot \sv M.$
    \item If $\svlf{M^\circ} < \infty$, then $\sv{\partial M} = 0$.
  \end{enumerate}
\end{prop}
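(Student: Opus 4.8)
The plan is to analyze the low-dimensional faces of a relative (resp.\ locally finite) fundamental cycle and to read off a fundamental cycle for the boundary $\partial M$. First I would prove~(1). Choose a relative cycle $c \in C_n(M, \partial M;\R)$ representing $\fclr{M,\partial M}$ with $\clone c \leq \sv{M,\partial M} + \epsilon$; by definition this means $\partial c \in C_{n-1}(\partial M;\R)$, and a standard fact (the long exact sequence of the pair, compatibility of fundamental classes) says that $\partial c$ is a fundamental cycle for $\partial M$, i.e.\ $[\partial c] = \fclr{\partial M} \in H_{n-1}(\partial M;\R)$. Now each singular $n$-simplex $\sigma$ in $c$ contributes at most $n+1$ faces $\partial_0 \sigma, \dots, \partial_n \sigma$ to $\partial c$, so the $\ell^1$-norm estimate $\clone{\partial c} \leq (n+1)\cdot \clone c$ is immediate from the triangle inequality. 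Taking the infimum over cycles gives $\sv{\partial M} \leq (n+1)\cdot (\sv{M,\partial M} + \epsilon)$ for all $\epsilon > 0$, hence $\sv{\partial M} \leq (n+1)\cdot \sv M$.

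For~(2), the idea is that if $M^\circ$ carries a locally finite fundamental cycle of finite $\ell^1$-norm, one can restrict it to an arbitrarily small collar neighbourhood of the end to obtain a locally finite cycle whose ``boundary contribution'' near $\partial M$ is a fundamental cycle of $\partial M$ but with arbitrarily small norm. Concretely, I would use a collar $\partial M \times [0,\infty) \hookrightarrow M^\circ$ and, given a locally finite fundamental cycle $z$ with $\clone z < \infty$, note that for $t \to \infty$ the tail $z|_{\partial M \times [t,\infty)}$ has $\ell^1$-norm tending to $0$ (since the total norm is finite, the tails must vanish). The restriction of $z$ to $\partial M \times [t, \infty)$ is a locally finite relative cycle for $(\partial M \times [t,\infty), \partial M \times \{t\})$ whose boundary on the slice $\partial M \times \{t\} \cong \partial M$ is a genuine fundamental cycle of $\partial M$; its norm is bounded by $(n+1)$ times the tail norm, which goes to $0$. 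Hence $\sv{\partial M} = 0$. One should be slightly careful: the precise homological bookkeeping identifying the restriction as representing $\pm[\partial M]$ uses that $z$ represents $\fclrlf{M^\circ}$ and naturality of the cap product / fundamental class under the inclusion of the collar tail; this is the standard argument that a locally finite cycle ``detects'' the fundamental class of a collar.

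The main obstacle is the homological bookkeeping in~(2): one must argue carefully that the piece of the locally finite fundamental cycle supported in a collar neighbourhood of the end, after pushing its boundary onto a slice $\partial M \times \{t\}$, genuinely represents the fundamental class of $\partial M$ (up to sign), rather than something homologically trivial. This requires choosing the cycle with an appropriate ``transversality'' or at least knowing that we may first homotope/subdivide $z$ so that its restriction behaves well; alternatively one invokes the known structure of $H_n^{\mathrm{lf}}$ of a manifold-with-collar and the fact that the restriction map $H_n^{\mathrm{lf}}(M^\circ;\R) \to H_n^{\mathrm{lf}}(\partial M \times [t,\infty);\R) \cong H_{n-1}(\partial M;\R)$ sends $\fclrlf{M^\circ}$ to $\fclr{\partial M}$. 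Once this identification is in place, the norm estimate is routine. I would also remark that~(2) can be deduced from~(1) applied to the manifold-with-boundary obtained by truncating the collar, combined with the observation that $\sv{M'} \to 0$ where $M'$ ranges over such truncations — but this still rests on the same finiteness-of-norm-forces-vanishing-tails principle.
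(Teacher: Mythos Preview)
The paper does not actually prove this proposition; it merely states it with a one-line hint (``an analysis of boundaries [or restrictions] of relative [or locally finite] fundamental cycles'') and references to the literature. Your proposal unpacks precisely that hint: part~(1) via the boundary operator on a relative fundamental cycle together with the trivial norm estimate $\|\partial_n\| \leq n+1$, and part~(2) via the vanishing tails of a finite-norm locally finite cycle in a collar. Both arguments are correct, and your identification of the main bookkeeping issue in~(2) is on point.

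It is worth noting that the paper later carries out essentially the same tail argument in detail, in the proof of Proposition~\ref{prop:smallboundaries} (small boundaries), where the stretched manifold~$N(\infty)$ and the truncations~$N(t)$ play exactly the role of your collar $\partial M \times [0,\infty)$ and its tails. There the homological bookkeeping you worry about is handled by working with the inverse-limit description of locally finite chains: a locally finite fundamental cycle restricts, for each compact~$K$, to a relative fundamental cycle of~$(M^\circ, M^\circ \setminus K)$, and the Cauchy condition built into the definition of the locally finite norm forces the differences (and hence the boundaries) to become small. This is a cleaner way to formalise your ``restriction behaves well'' step than invoking transversality or subdivision, and you may prefer to phrase your argument that way.
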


\begin{rem}
Actually, in this situation, the following stronger inequality
$\sv{\partial M} \leq (n-1) \cdot \sv M$
holds~\cite[Lemma~2.3, Proposition~3.1]{BFP}. 
\end{rem}

However, in general it is \emph{not} true that $\svlf{M^\circ} =
\sv{M,\partial M}$, as can be seen by looking at~$M :=
   [0,1]$~\cite{vbc,loehphd}.

In the case with boundary, the exact value of simplicial volume was
computed for products of surfaces with the interval and for compact
$3$-manifolds obtained by adding $1$-handles to Seifert
manifolds~\cite{BFP}.  Further computations can be obtained by taking
connected sums or gluing along $\pi_1$-injective boundary components
with amenable fundamental groups, relying on the following result.

\begin{thm}[additivity of simplicial volume under amenable gluings]\label{thm:additivity}
  Let $k, n \in \N_{\geq 2}$, let $M_1,\dots,M_k$ be oriented compact
  $n$-manifolds, and suppose that the fundamental group of every
  boundary component of every~$M_j$ is amenable. Let $M$ be a manifold
  obtained by gluing $M_1,\dots, M_k$ along (some of) their boundary
  components. Then
  \[ \sv{M,\partial M}\leq \sv{M_1,\partial M_1}+\dots +\sv{M_k,\partial M_k}.
  \]
  In addition, if the gluings defining $M$ are compatible, then
  \[ \sv{M,\partial M}=\sv{M_1,\partial M_1}+\dots +\sv{M_k,\partial M_k}.\]
\end{thm}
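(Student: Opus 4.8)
The plan is to prove both the inequality and the equality (under compatibility) by working with relative fundamental cycles and analysing their behaviour under the gluing map. First I would set up notation: let $q\colon M_1 \sqcup \dots \sqcup M_k \to M$ be the quotient map realising the gluing, and note that $q$ restricts to a map of pairs sending $\bigsqcup_j (M_j, \partial M_j)$ to $(M, \partial M)$. The key structural fact is that $q_*$ sends the sum of the relative fundamental classes $\sum_j [M_j, \partial M_j]_\R$ to $[M, \partial M]_\R$; this is a standard Mayer--Vietoris / excision argument, using that the glued boundary pieces are identified in pairs with opposite induced orientations so their contributions to $\partial$ of the total cycle cancel in $M$. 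Since $q_*$ is $\ell^1$-norm nonincreasing on chains (it sends a singular simplex to a singular simplex), applying $q_*$ to a near-optimal relative fundamental cycle $c_j$ for each $M_j$, summing, and noting $\partial(\sum_j q_*c_j) = 0$ in $C_{n-1}(M, \partial M;\R)$, immediately gives $\sv{M,\partial M} \le \sum_j \sv{M_j,\partial M_j}$; this is the easy half.

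For the reverse inequality (assuming the gluings are \emph{compatible}, in the sense that the gluing homeomorphisms can be chosen to pair up the relevant boundary tori compatibly with fundamental cycles), the strategy is to take a near-optimal relative fundamental cycle $z$ for $(M, \partial M)$ and "cut it apart" along the gluing loci to produce relative cycles on each~$M_j$. The idea: by the uniform boundary condition / the fact that one is gluing along amenable (here toral) boundary components, one can first homotope $z$ so that it is, up to a small correction, transverse to the gluing hypersurfaces and decomposes as $z = \sum_j z_j$ with $z_j$ a chain in $M_j$; the boundary $\partial z_j$ then lies near the gluing tori, and one invokes the vanishing of simplicial volume of the tori (amenability) together with a filling argument to push $\partial z_j$ into $\partial M_j$ at negligible $\ell^1$-cost, yielding genuine relative fundamental cycles $z_j$ for $(M_j, \partial M_j)$ with $\sum_j \clone{z_j} \le \clone z + \epsilon$. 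Summing the resulting lower bounds gives $\sum_j \sv{M_j,\partial M_j} \le \sv{M,\partial M}$.

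The main obstacle is this cutting step: making the decomposition $z = \sum z_j$ precise and controlling the $\ell^1$-norm of the pieces and of the fillings of their boundaries. The honest route is not to attempt naive geometric transversality for singular chains but to pass to a multicomplex or to use Gromov's equivalence theorem / the mapping theorem for amenable gluings (bounded cohomology of the boundary tori vanishes in positive degrees), which is exactly the technology behind Theorem~\ref{thm:additivity} as cited to \cite{BFP}. In other words, I expect the proof to quote (or reprove along the lines of) the Bucher--Frigerio--Pagliantini argument: subadditivity from pushing forward cycles, and superadditivity under compatibility from the fact that the relative fundamental cycle of a glued manifold can be split across an amenable separating hypersurface without increasing norm, using the vanishing of the bounded cohomology of that hypersurface to control the correction terms. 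The delicate points — ensuring the split pieces actually represent the relative fundamental classes (not some other multiple) and that compatibility of the gluings is precisely what prevents cancellation or loss of mass — are where all the real work lies, and I would organise the write-up around carefully stating the compatibility hypothesis and then reducing to the corresponding statement in \cite{BFP}.
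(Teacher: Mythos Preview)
The paper does not actually prove this theorem; it is quoted from the literature with attribution to Gromov~\cite{vbc}, Kuessner~\cite{kuessnerthesis}, and a complete proof in~\cite{BBFIPP} (not~\cite{BFP}, which concerns a different computation). So there is no ``paper's own proof'' to compare against here.

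That said, your sketch of the subadditivity direction contains a genuine gap. You claim that $\partial\bigl(\sum_j q_* c_j\bigr) = 0$ in $C_{n-1}(M,\partial M;\R)$, but this is false in general: if a boundary component~$S \subset \partial M_j$ is glued to~$S' \subset \partial M_{j'}$, then $\partial c_j|_S$ and $\partial c_{j'}|_{S'}$ are each fundamental cycles of~$S \cong S'$ (with opposite orientations), but they need not cancel as \emph{chains} --- they are only homologous. Hence $\sum_j q_* c_j$ is typically not a relative cycle for~$(M,\partial M)$. The amenability hypothesis is already needed at this stage: one uses the uniform boundary condition on the glued components to fill the difference of the two boundary cycles by a chain of arbitrarily small norm, and only then obtains a genuine relative fundamental cycle of~$(M,\partial M)$ with controlled~$\ell^1$-norm. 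This is exactly the mechanism the paper itself exploits in its own gluing result, Theorem~\ref{thm:glue}, in the setting of normed models. Your second paragraph correctly identifies UBC/amenability as the relevant tool, but it enters already for the ``easy'' inequality, not only for superadditivity.
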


Here, a gluing $f:S_1\rightarrow S_2$ of two boundary components
$S_i\subset \partial M_{j_i}$ is called \emph{compatible} if
$\pi_1(f)(K_1)=K_2$ where $K_i$ is the kernel of the map
$\pi_1(S_i)\rightarrow \pi_1(M_{j_i})$ induced by the inclusion.

This statement is originally due to Gromov~\cite[p.~58]{vbc}, and is also
discussed by Kuessner~\cite{kuessnerthesis}. A complete proof was
given in terms of a more algebraic framework~\cite[Theorem~3]{BBFIPP}.


In particular, additivity allows to compute simplicial volume of
closed $3$-mani\-folds with help of geometrization: In view of
additivity, the simplicial volume of a closed $3$-manifold equals the
sum of the simplicial volumes of its hyperbolic pieces~\cite{soma}.

As last step, one needs to calculate the simplicial volume of the
hyperbolic pieces~\cite{vbc,thurston,Fra,FP,FM,BBI}:

\begin{thm}[simplicial volume of hyperbolic manifolds]\label{thm:svhyp}
  Let $M$ be an oriented compact $n$-manifold 
 whose interior~$M^\circ$   admits a complete hyperbolic metric of finite volume. Then
  \[ \svlf{M^\circ} = \sv {M,\partial M} = \frac{\vol (M^\circ)}{v_n^\triangle}, 
  \]
  where $v_n^\triangle$ is the (finite!) volume of ideal, regular
  $n$-simplices in~$\overline{\hyp^n}$.
\end{thm}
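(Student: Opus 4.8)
The plan is to reduce the theorem to two separate statements: $\svlf{M^\circ} = \vol(M^\circ)/v_n^\triangle$, and $\svlf{M^\circ} = \sv{M,\partial M}$ (the latter being vacuous when $M$ is closed, since then $M = M^\circ$). Together these give both claimed equalities. Throughout, fix the complete hyperbolic metric of finite volume on $M^\circ$, let $\omega$ be its volume form — an $L^1$-form with $\int_{M^\circ}\omega = \vol(M^\circ)$ — and identify $\pi_1(M)$ with a lattice $\Ga \le \Isom^+(\hyp^n)$.

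\emph{Lower bound for $\svlf{M^\circ}$.} Here I would run the straightening argument of Gromov and Thurston \cite{vbc,thurston}. The straightening operator $\str_*$ — lift a singular simplex to $\hyp^n$, replace it by the geodesic simplex on the same ordered vertices, and project down — is $\Ga$-equivariant, chain homotopic to the identity, and non-increasing for the $\ell^1$-norm, and it carries every locally finite fundamental cycle to one all of whose simplices are straight. If $c = \sum_i a_i \sigma_i$ is such a cycle with $\clone c < \infty$, then $\sum_i a_i\int_{\Delta^n}\sigma_i^*\omega$ converges absolutely (each term is at most $v_n^\triangle$ in absolute value and $\clone c$ is finite) and equals $\int_{M^\circ}\omega = \vol(M^\circ)$ by de Rham theory for locally finite homology, while $\bigl|\int_{\Delta^n}\sigma_i^*\omega\bigr| \le v_n^\triangle$ since $v_n^\triangle$ is the supremum of the volumes of geodesic $n$-simplices in $\hyp^n$ (Haagerup--Munkholm). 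Hence $\vol(M^\circ) \le v_n^\triangle\cdot\clone c$, and the infimum over $c$ gives $\svlf{M^\circ}\ge \vol(M^\circ)/v_n^\triangle$.

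\emph{Upper bound for $\svlf{M^\circ}$.} Here the tool is Gromov's smearing construction. Fix a positively oriented regular ideal $n$-simplex in $\overline{\hyp^n}$ and, for small $\epsilon > 0$, a regular \emph{compact} $n$-simplex $\sigma_0$ with $\vol(\sigma_0)\ge v_n^\triangle - \epsilon$. Since $\Ga$ is a lattice, $\Ga\backslash\Isom^+(\hyp^n)$ carries a finite Haar measure; integrating the $\Ga$-orbit of $\sigma_0$ against it and pushing to $M^\circ$ produces a measure chain whose invariance under $\Isom^+(\hyp^n)$ makes its boundary vanish and which represents a positive multiple of the fundamental class; normalising it to be a fundamental cycle, its mass is $\vol(M^\circ)/\vol(\sigma_0)\le \vol(M^\circ)/(v_n^\triangle - \epsilon)$. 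Transporting this back to ordinary singular homology without increasing the seminorm — via the isometric identification of measure homology with singular homology, or by a direct approximation — and letting $\epsilon\to 0$ settles the closed case. When $M$ has cusps there is the extra task of replacing the portions of the smeared simplices that run off into the cusps by an honest locally finite fundamental cycle of $M^\circ$: because the cusp cross-sections are flat $(n-1)$-manifolds — with amenable fundamental group, vanishing simplicial volume, and a uniform boundary condition — this modification near the cusps costs nothing in the limit, giving $\svlf{M^\circ}\le \vol(M^\circ)/v_n^\triangle$.

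\emph{Identification of the two seminorms, and the main obstacle.} It remains to prove $\sv{M,\partial M} = \svlf{M^\circ}$: one inequality compresses a locally finite fundamental cycle of $M^\circ$ to a relative fundamental cycle of $(M,\partial M)$, the other extends a relative fundamental cycle across the cusps to a locally finite one, in both cases using that each boundary torus has amenable fundamental group so that, via uniform boundary conditions and the vanishing of $\sv{\partial M}$, the corrections near $\partial M$ are asymptotically free; this coincidence is special to the finite-volume hyperbolic setting — it already fails for $M = [0,1]$ — and is part of the analysis of \cite{Fra,FP,FM,BBI}. The genuinely delicate points of the whole argument are exactly these cuspidal ones: organising the symmetric smeared measure cycle into a fundamental cycle of the non-compact (or truncated) manifold while retaining norm control, and moving between the locally finite and relative pictures. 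The lower bound and the closed-manifold upper bound are, by contrast, soft once geodesic straightening and measure homology are at hand.
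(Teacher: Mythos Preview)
The paper does not actually prove this theorem: it is stated as background, with references to \cite{vbc,thurston,Fra,FP,FM,BBI}, and no proof is given. Your sketch follows precisely the standard route from those references --- Gromov--Thurston straightening for the lower bound, smearing for the upper bound, and the amenable/UBC analysis of the cusp cross-sections to pass between the locally finite and relative pictures --- and you correctly flag the cuspidal steps as the delicate ones.
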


Hence, in total we obtain~\cite{soma}:

\begin{thm}[simplicial volume of $3$-manifolds]\label{thm:sv3}
  Let $M$ be an oriented closed $3$-manifold. Then
  \[ \sv M = \sum_{j=1}^k \frac{\vol{(N_j^\circ)}}{v_3^\triangle}, 
  \]
  where $N_1, \dots, N_k$ are the hyperbolic pieces of~$M$ (see
  Theorem~\ref{thm:geom}).
\end{thm}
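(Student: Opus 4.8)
The plan is to assemble the formula from the ingredients already recalled, following Soma~\cite{soma}: geometrization (Theorem~\ref{thm:geom}), additivity under amenable gluings (Theorem~\ref{thm:additivity}), the vanishing of simplicial volume of Seifert fibred pieces, and the hyperbolic computation (Theorem~\ref{thm:svhyp}).

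First I would reduce to prime manifolds: the prime decomposition writes $M$ as a connected sum $M \cong P_1 \# \dots \# P_r$, and gluing along $2$-spheres (which have trivial, hence amenable, fundamental group, so that such a gluing is automatically compatible) lets us apply Theorem~\ref{thm:additivity} to obtain $\sv M = \sum_{i=1}^r \sv{P_i}$; if some prime summand is $S^2 \times S^1$, it is Seifert fibred and will contribute $0$. For an irreducible summand $P_i$, the JSJ decomposition together with geometrization of the JSJ pieces exhibits $P_i$ as a manifold obtained from finitely many compact pieces, each either Seifert fibred or with interior admitting a complete hyperbolic metric of finite volume, by gluing along incompressible tori. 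Since these tori are $\pi_1$-injective in the adjacent pieces, the kernels of the maps $\pi_1(T^2) \to \pi_1(Q)$ induced by inclusion into a piece $Q$ are trivial, so all the gluings are compatible; as $\pi_1(T^2)$ is amenable, Theorem~\ref{thm:additivity} (with $n = 3$) gives that $\sv{P_i}$ equals the sum of the relative simplicial volumes $\sv{Q,\partial Q}$ of its geometric pieces $Q$. (If $M$ is itself already a single geometric piece, the claim is immediate from Theorem~\ref{thm:svhyp} or from the Seifert vanishing below, so we may assume a genuine decomposition.)

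It remains to evaluate $\sv{Q,\partial Q}$ for the geometric pieces. If $Q$ is Seifert fibred, then $\sv{Q,\partial Q} = 0$: this is classical and follows, via a standard amenability/self-map argument, from the circle-fibred structure of $Q$ over a $2$-orbifold, using that every boundary component of $Q$ is a torus and hence has vanishing simplicial volume (cf.\ Proposition~\ref{prop:svboundary}, \cite{soma}). If $Q$ is a hyperbolic piece, then by construction $Q$ is a compact $3$-manifold whose interior $Q^\circ$ carries a complete hyperbolic metric of finite volume, so Theorem~\ref{thm:svhyp} yields $\sv{Q,\partial Q} = \vol(Q^\circ)/v_3^\triangle$. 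Summing over all pieces, the Seifert contributions vanish and, relabelling the hyperbolic pieces as $N_1,\dots,N_k$, we obtain
\[ \sv M = \sum_{j=1}^k \frac{\vol(N_j^\circ)}{v_3^\triangle}. \]

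The main obstacle — and the point where one genuinely has to quote the literature rather than compute — is the \emph{equality} case of Theorem~\ref{thm:additivity}: one must know that the decomposition coming from geometrization consists of pieces with amenable boundary glued by compatible maps, so that relative simplicial volume is truly additive (and not merely sub-additive) over it; this ultimately rests on Gromov's mapping theorem together with the algebraic framework of~\cite{BBFIPP} and on the incompressibility of the JSJ tori. The vanishing of $\sv{Q,\partial Q}$ for Seifert pieces with toral boundary is the other external input one quotes rather than reproves here.
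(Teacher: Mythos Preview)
Your proposal is correct and follows exactly the strategy the paper indicates: the paper does not actually give a proof of this theorem but presents it as a direct consequence of the cited ingredients (Theorem~\ref{thm:additivity}, Theorem~\ref{thm:svhyp}, and the Seifert vanishing), with a reference to Soma~\cite{soma}. Your write-up is in fact more careful than the paper in one respect: you explicitly pass through the prime decomposition before invoking geometrization, whereas the paper (here and again in Section~\ref{subsec:proof}) applies Theorem~\ref{thm:geom} as though it held for arbitrary closed oriented $3$-manifolds rather than only irreducible ones.
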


\subsection{Cubical simplicial volume}

We will now recall the definition of cubical singular
homology~\cite{masseyhom,eilenberg} and of cubical simplicial volume.
Cubical singular homology is defined in terms of standard cubes
instead of standard simplices: For $n\in \N$ let $\square^n :=
[0,1]^n$ be the \emph{standard $n$-cube}. If $X$ is a topological
space, then continuous maps of type~$\square^n \longrightarrow X$ are
called \emph{singular $n$-cubes of~$X$}. The geometric/combinatorial
boundary of~$\square^n$ consists of $2\cdot n$ cubical faces. 
More precisely, for~$j \in \{1,\dots,n\}$ and $i \in \{0,1\}$ we 
define the \emph{$(j,i)$-face of~$\square^n$} by
\begin{align*}
  \square^n_{(j,i)} \colon \square^{n-1}& \longrightarrow \square^n \\
    x & \longmapsto (x_1,\dots,x_{j-1}, i , x_{j}, \dots, x_{n-1}).
\end{align*}
Correspondingly, for a singular $n$-cube~$c \colon \square^n \longrightarrow X$ 
in a space~$X$ we define the cubical boundary by
\[ \partial c := 
   \sum_{j=1}^n (-1)^{j} \cdot (c \circ \square^n_{(j,0)} - c\circ \square^n_{(j,1)}).
\]
This leads to a chain complex~$Q_*(X;\R)$ of cubical singular chains with
$\R$-coefficients. A singular $n$-cube~$c$ is \emph{degenerate} if it is independent of one of
the coordinates, i.e., if there exists a~$j \in \{1,\dots,n\}$ such that 
for all~$t \in [0,1]$ and all~$x \in \square^{n-1}$ we have
\[ 
     c(x_1,\dots, x_{j-1}, t, x_j, \dots, x_{n-1}) 
   = c(x_1,\dots, x_{j-1}, 0, x_j, \dots, x_{n-1}).
\]
Dividing out the subcomplex~$D_*(X;\R)$ generated
by degenerate singular cubes leads to the \emph{cubical chain complex}
\[ C_*^\square(X;\R) := Q_*(X;\R) / D_*(X;\R)
\] 
and hence to \emph{cubical singular homology~$H_*^\square(X;\R)$}
(which admits a natural extension to a functor). Dividing
out degenerate singular cubes is necessary in order for cubical
singular homology of a point to be concentrated in degree~$0$.

We write~$\qclone{\cdot}$ for the norm on~$C_*^\square(\args;\R)$
induced from the $\ell^1$-norm on~$Q_*(\args;\R)$ with respect to the
basis consisting of singular cubes; notice that this norm coincides
with the $\ell^1$-norm given by the basis of all non-degenerate
singular cubes.

It is well known that there is a canonical natural (both in spaces and
coefficients) isomorphism~$H_*^\square \longrightarrow
H_*$~\cite[Theorem~V]{eilenberg} between cubical singular homology and
ordinary singular homology. However, in general this isomorphism is
\emph{not} isometric with respect to the corresponding
$\ell^1$-semi-norms.

If $M$ is an oriented closed manifold, we denote the
corresponding cubical $\R$-fundamental class by~$\qfclr M \in
H^\square_*(M;\R)$.

\begin{defi}[cubical simplicial volume, absolute case]
  The \emph{cubical simplicial volume} 
  of an oriented closed $n$-manifold~$M$  is defined as
  \[ \qsv M := \inf \bigl\{ \qclone c 
                   \bigm| c \in C^\square_n(M;\R), \partial c = 0, [c] = \qfclr M 
                          \in H_n^\square(M;\R) 
                    \bigr\}.
                    \]
\end{defi}

It is not hard to show that for every~$n \in \N$ there are
$T_n, V_n \in \R_{\geq 0}$ such that for all oriented closed
$n$-manifolds~$M$ we have
\[ T_n \cdot \sv M \leq \qsv M \leq V_n \cdot \sv M 
\]
(see Corollary~\ref{cor:svvsqsv} below). However, inheritance
results such as (sub-)ad\-di\-tivity cannot be derived directly from these
estimates.

For the divide and conquer approach to computing cubical simplicial
volume of $3$-manifolds we will need relative and locally finite
versions of cubical simplicial volume and corresponding
(sub-)additivity statements. While we could this literally in the same
way as in the simplicial case, we prefer to develop these tools in
slightly larger generality in the subsequent sections.

\section{Normed models of the singular chain complex}\label{sec:models}

We will now develop a framework that allows for a convenient
translation from ordinary simplicial volume to other simplicial
volumes, such as cubical simplicial volume.

\subsection{Basic terminology for normed models}

Let $\nCh$ be the category of normed $\R$-chain complexes, i.e., the
category of chain complexes whose chain modules are normed $\R$-vector
spaces, whose boundary maps are continuous $\R$-linear maps and whose
chain maps consist of $\R$-linear maps of norm at most~$1$.  Moreover,
we write $\Top$ for the category of topological spaces and continuous
maps.

\begin{defi}[normed models of the singular chain complex]\label{def:gnsch}
  \hfil
  \begin{itemize}
  \item
    A \emph{functorial normed chain complex} is a functor~$\Top
    \longrightarrow \nCh$.
  \item
    Let $F$, $F' \colon \Top \longrightarrow \nCh$ be functorial normed
    chain complexes. A \emph{natural continuous chain map~$F
      \Longrightarrow F'$} is a natural transformation~$\varphi \colon
    F \Longrightarrow F'$ viewed in the category of $\R$-chain
    complexes, where for every space~$X$ and $n \in \N$ the linear
    map~$\varphi_n^X \colon F_n(X) \longrightarrow F'_n(X)$ is
    continuous (but not necessarily of norm at most~$1$).
  \item
    Let $\varphi, \varphi' \colon F \Longrightarrow F'$ be natural
    continuous chain maps between functorial normed chain complexes.
    A \emph{natural normed chain homotopy $h \colon
      \varphi \simeq \varphi'$} is a family~$(h^X \colon \varphi^X
    \simeq \varphi'{}^X)_{X \in \Ob(\Top)}$ of $\R$-linear chain homotopies
    such that for all spaces~$X$ and all~$n \in \N$ the linear
    map~$h_n^X \colon F_n(X) \longrightarrow F'_{n+1}(X)$ is
    continuous.
  \item
    Natural continuous chain maps~$\varphi \colon F \Longrightarrow
    F'$ and $\psi \colon F' \Longrightarrow F$ are \emph{mutually
      inverse} if there exist natural normed chain homotopies $\psi
    \circ \varphi \simeq \id_F$ and $\varphi \circ \psi \simeq
    \id_{F'}$.  In this case, $\varphi$ and $\psi$ are \emph{natural
      normed chain homotopy equivalences}.
  \item
    A \emph{normed model of the singular chain complex} is a
    triple~$(F, \varphi, \psi)$, where $F \colon \Top \longrightarrow
    \nCh$ is a functorial normed chain complex, and where $\varphi
    \colon F \Longrightarrow C_*(\args;\R)$ and $\psi \colon
    C_*(\args;\R) \Longrightarrow F$ are mutually inverse natural
    normed chain homotopy equivalences. Here, $C_*(\args;\R)$ is 
    equipped with the $\ell^1$-norm.
  \end{itemize}
\end{defi}

For example, $(C_*(\args;\R), \id_{C_*(\args;\R)}, \id_{C_*(\args;\R)})$ is 
a normed model of the singular chain complex. 

\begin{exa}[ignoring degenerate singular simplices]\label{exa:degenerate}
  In analogy with the cubical case, we can also look at the singular 
  chain complex modulo degenerate simplices: For a topological space~$X$, 
  let $E_*(X;\R) \subset C_*(X;\R)$ be the subcomplex generated by 
  all degenerate singular simplices. Then we define
  \[ C_*^\triangle(\args;\R) := C_*(\args;\R) / E_*(\args;\R)
     \colon \Top \longrightarrow \nCh, 
  \]
  where we equip~$C_*^\triangle(\args;\R)$ with the
  norm~$\cdlone{\cdot}$ induced from the~$\ell^1$-norm (this coincides
  with the $\ell^1$-norm with respect to the basis of non-degenerated
  singular simplices).

  We will now construct a normed model of~$C_*(\args;\R)$
  on~$C_*^\triangle(\args;\R)$: Let $\psi \colon C_*(\args;\R)
  \Longrightarrow C_*^\triangle(\args;\R)$ be the natural continuous
  chain map given by the canonical projection. Conversely, we consider the map 
  \begin{align*}
    \varphi_n^X \colon C_n^\triangle(X;\R) 
    & \longrightarrow C_n(X;\R) \\
    [\sigma \colon \Delta^n \rightarrow X] 
    & \longmapsto \frac1{(n+1)!} \cdot \sum_{\pi \in \Isom(\Delta^n)} 
                  (-1)^{\sgn \pi} \cdot \sigma \circ \pi
  \end{align*}
where $\sgn \pi\in \{0,1\}$ encodes whether $\pi$ is orientation preserving or not.
It is   not hard to show that $\varphi_n^X$ defines a well-defined natural chain map that is continuous in every
  degree. Thus, we obtain a natural continuous chain map~$\varphi
  \colon C_*^\triangle(\args;\R) \Longrightarrow C_*(\args;\R)$.

  A standard argument (e.g., via acyclic models~\cite{eilenberg})
  shows that $\varphi$ and $\psi$ are mutually inverse natural normed
  chain homotopy equivalences. Hence, $(C_*^\triangle(\args;\R),
  \varphi,\psi)$ is a normed model of~$C_*(\args;\R)$. Notice that, by
  construction, $\|\varphi^X_n\| \leq 1$ and $\|\psi^X_n\| \leq 1$ for
  all spaces~$X$ and all~$n \in \N$. 
\end{exa}

Cubical normed models of the singular chain complex will be
constructed in Section~\ref{sec:cubicalmodels}.

\subsection{Extending normed models}

Normed models of~$C_*(\args;\R)$ also lead to corresponding notions
for manifolds with boundary and for open manifolds: Let $\Topp$ be the
category of pairs of topological spaces (and maps of pairs) and let
$\Toppr$ be the category of topological spaces and proper continuous
maps. Moreover, let $\snCh$ be the category of semi-normed $\R$-chain
complexes and let $\pnCh$ be the category of semi-normed $\R$-chain
complexes where also the value~$\infty$ is allowed for the norms. The
notions of functorial normed chain complexes etc.\ from
Definition~\ref{def:gnsch} easily generalise to these variations.

We first extend normed models to the relative case:

\begin{defi}[normed models, relative case]
  Let $(F,\varphi,\psi)$ be a normed model of~$C_*(\args;\R)$ with 
  norm~$\gclone{\cdot}^F$. 
  For a pair~$(X,A)$ of spaces with inclusion~$i \colon A \hookrightarrow X$ 
  we set
  \[ F(X,A) := F(X) \bigm/ F(i)(F(A)),
  \]
  which we endow with the quotient semi-norm $\gclone{\cdot}^F$ of~$F(X)$. 
  Hence, $F(\cdot, \cdot)$ defines a functor from $\Topp$ to $\snCh$.
  Moreover,
  $\varphi^X$ and $\psi^X$ induce well-defined natural continuous
  chain maps $\varphi^{(X,A)} \colon F(X,A) \longrightarrow
  C_*(X,A;\R)$ and $\psi^{(X,A)} \colon C_*(X,A;\R) \longrightarrow
  F(X,A)$ (which are also naturally mutually inverse chain homotopy
  equivalences through degree-wise continuous chain homotopies).
\end{defi}

\begin{exa}
Notice that for the trivial normed model~$(C_*(\args;\R), \id, \id)$
of~$C_*(\args;\R)$ this definition yields the same normed structure 
on the relative singular chain complex as the one used in the definition 
of relative simplicial volume (Definition~\ref{def:relsv}).
\end{exa}

\begin{defi}[locally finite normed models]
  Let $(F,\varphi,\psi)$ be a normed model of~$C_*(\args;\R)$. For a
  topological space~$X$ we let $K(X)$ denote the set of all compact
  subspaces of~$X$ (directed by inclusion). For~$X$ we then define
  \[ F^{\mathrm{lf}}(X) 
     := \varprojlim_{K \in K(X)} F(X, X \setminus K),
  \]
  where the inverse limit is taken with respect to the maps induced by
  the inclusions. For~$c = (c_K)_{K \in K(X)} \in F_n^{\mathrm{lf}}(X)$ we set
  \[ \gclone c ^F 
  := \inf_{\overline c \in A(c)} 
  \lim_{K \in K(X)} \gclone{\overline c_K}^F
  \in [0,\infty],
  \]
  where
  \begin{align*}
    A(c) := \bigl\{ (\overline c_K \in F_n(X))_{K \in K(X)}
    \bigm| &\;\text{$\overline c_K$ represents~$c_K$ 
      in $F_n(X,X \setminus K)$}
    \\
    &\;\text{and $(\overline c_K)_{K \in K(X)}$ is $\gclone{\cdot}^F$-Cauchy}  
    \bigr\}
  \end{align*}
  is the (possibly empty) set of all Cauchy families associated
  with~$c$.  This defines a (potentially infinite) semi-norm
  on~$F^{\mathrm{lf}}(X)$.  
  Hence, $F^{\mathrm{lf}}(\cdot)$ defines a functor from $\Toppr$  to $\pnCh$.
  Moreover, $\varphi^X$ and $\psi^X$ induce
  well-defined natural continuous chain maps~$\varphi^{\mathrm{lf},X}:F^{\mathrm{lf}}(X)
  \longrightarrow C^{\mathrm{lf}}_*(X;\R)$ and
  $\psi^{\mathrm{lf},X}:C^{\mathrm{lf}}_*(X;\R) \longrightarrow F^{\mathrm{lf}}(X)$ (which
  are also naturally mutually inverse chain homotopy equivalences
  through degree-wise continuous chain homotopies).
\end{defi}

Notice that we have 
\[ \gclone c ^F
\geq \sup_{K \in K(X)} \gclone{c_K}^F
\]
for all~$c \in F^{\mathrm{lf}}(X)$ and that equality holds for
the trivial model~$C_*(\args;\R)$ and all cubical models. It is not
clear whether equality holds for all normed models of~$C_*(\args;\R)$,
but the above definition seems to be more suitable for geometric
applications.

\begin{exa}\label{exa:lfalternative}
For the trivial normed model~$(C_*(\args;\R), \id, \id)$
of~$C_*(\args;\R)$ the above definition coincides with the locally finite
singular complex and yields the same normed structure on the locally
finite singular chain complex as the one used in the definition of
simplicial volume of open manifolds (Definition~\ref{def:lfsv}).
\end{exa}

\subsection{Generalised simplicial volume}

Normed models of the singular chain complex induce semi-norms on
singular homology:

\begin{defi}[induced semi-norm on homology]
  Let $(F,\varphi,\psi)$ be a normed model of~$C_*(\args;\R)$. For a 
  pair~$(X,A)$ of spaces we define
  \begin{align*}
    \glone{\cdot}^F \colon H_n(X,A;\R) 
    & \longrightarrow \R_{\geq 0}
    \\
    \alpha 
    & \longmapsto
    \inf \bigl\{ |c|^F 
    \bigm| \text{$c \in F_n(X,A)$, $\partial c =0$, 
      $[\varphi_n^{(X,A)}(c)] = \alpha$}
    \bigr\}.
  \end{align*}
  Similarly, for a topological space~$X$ we define
  \begin{align*}
    \glone{\cdot}^F \colon H^{\mathrm{lf}}_n(X;\R)
    & \longrightarrow [0,\infty]
    \\
    \alpha
    & \longmapsto
    \inf \bigl\{ |c|^F 
    \bigm| \text{$c \in F^{\mathrm{lf}}_n(X)$, $\partial c =0$, 
      $[\varphi_n^{\mathrm{lf},X}(c)] = \alpha$}
    \bigr\}.    
  \end{align*}
\end{defi}

In particular, we obtain corresponding simplicial volumes.

\begin{defi}[generalised simplicial volume]
  Let $(F,\varphi,\psi)$ be a normed model of~$C_*(\args;\R)$, 
  and let $M$ be an oriented compact 
  $n$-manifold. Then the \emph{$F$-simplicial volume of~$M$} is
  defined as
  \[ \gsv {M,\partial M}^F 
     := \glone{H_n(\psi_n^{(M,\partial M)})(\fclr {M,\partial M})}^F.
  \]
  If $\partial M = \emptyset$, we abbreviate~$\gsv M ^F := \gsv
  {M,\partial M}^F$. If $M$ is an oriented $n$-manifold 
  without boundary, then we define
  \[ \gsvlf{M}^F 
     := \glone{H_n(\psi_n^{\mathrm{lf},M})(\fclrlf M)}^F.
  \]
\end{defi}

One should note that the $F$-simplicial volume, in general, also
depends on~$\varphi$ and~$\psi$; however, in the cubical case, we will
add a normalisation condition that removes this ambiguity
(Remark~\ref{rem:normalisation}).

For simplicity, in the compact case, cycles in~$F_n(M,\partial M)$
that represent $H_n(\psi_n^{(M,\partial M)})(\fclr{M,\partial M})$ are
also called \emph{(relative) $F$-fundamental cycles}; in the
non-compact case, cycles in~$F_n^{\mathrm{lf}}(M)$ that represent
$H_n(\psi_n^{\mathrm{lf},M})(\fclrlf M)$ are also called \emph{locally
  finite $F$-fundamental cycles}.

It is not hard to show that all these simplicial volumes are
relative/proper homotopy invariants of the manifolds in question. At
this point it is essential that normed models map (proper) continuous
maps to chain maps of norm at most~$1$.

By the very definitions, all generalised norm chain complexes yield
equivalent simplicial volumes in the following sense:

\begin{prop}[equivalence of generalised simplicial volumes]\label{prop:equivalence}
  Let $(F,\varphi,\psi)$ be a normed model for~$C_*(\args;\R)$, and
  let $n \in \N$.
   \begin{enumerate}
    \item Then for all oriented compact $n$-manifolds~$M$ we have
      \begin{align*} 
        \gsv{M,\partial M}^F & \leq \|\psi_n^{\Delta^n}\| \cdot \sv{M,\partial M}
        , \\
        \sv{M,\partial M} & \leq \|\varphi_n^{(M,\partial M)}\| \cdot \gsv{M,\partial M}^F. 
      \end{align*}
    \item
      For all oriented $n$-manifolds~$M$ without boundary we have
      \begin{align*} 
        \gsvlf M ^F & \leq \|\psi_n^{\Delta^n}\| \cdot \svlf M
        , \\
        \svlf M & \leq \|\varphi_n^{\mathrm{lf},M}\| \cdot \gsvlf M^F.
      \end{align*}
  \end{enumerate}
\end{prop}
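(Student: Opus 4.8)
\textbf{Proof strategy for Proposition~\ref{prop:equivalence}.}
The plan is to chase the relevant fundamental cycles through the natural continuous chain maps $\varphi$ and $\psi$ and to estimate norms degree by degree. I will only discuss the absolute compact case $\partial M = \emptyset$; the relative case is obtained by passing to the quotient chain complexes, and the locally finite case by passing to inverse limits and taking Cauchy families, with no essential change to the argument.

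For the first inequality, let $c \in C_n(M;\R)$ be a fundamental cycle of $M$, i.e.\ $\partial c = 0$ and $[c] = \fclr M$. Then $\psi_n^{(M,\partial M)}(c) \in F_n(M,\partial M)$ is a cycle (since $\psi$ is a chain map) representing $H_n(\psi_n)(\fclr M)$, so it is an $F$-fundamental cycle. Since $\psi^{(M,\partial M)}$ is induced by $\psi^M$, which in turn is part of a natural transformation, naturality applied to the singular simplices of $c$ shows that $\psi_n^M$ is determined on each basis element $\sigma\colon \Delta^n \to M$ by $\sigma_*\bigl(\psi_n^{\Delta^n}(\id_{\Delta^n})\bigr)$, and each $\sigma_* = F_n(\sigma)$ has norm at most~$1$ because $F$ takes continuous maps to chain maps of norm $\le 1$. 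Hence $|\psi_n^M(\sigma)|^F \le \|\psi_n^{\Delta^n}\|$ for every singular simplex $\sigma$, and by the triangle inequality $|\psi_n^M(c)|^F \le \|\psi_n^{\Delta^n}\|\cdot \clone c$; passing to the quotient semi-norm and taking the infimum over all fundamental cycles $c$ gives $\gsv{M,\partial M}^F \le \|\psi_n^{\Delta^n}\|\cdot \sv{M,\partial M}$.

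For the second inequality, let $b \in F_n(M,\partial M)$ be an $F$-fundamental cycle, so $\partial b = 0$ and $[\varphi_n^{(M,\partial M)}(b)] = \fclr M$. Then $\varphi_n^{(M,\partial M)}(b)$ is a singular fundamental cycle, so by definition $\sv{M,\partial M} \le \clone{\varphi_n^{(M,\partial M)}(b)} \le \|\varphi_n^{(M,\partial M)}\|\cdot |b|^F$. Taking the infimum over all such $b$ yields $\sv{M,\partial M} \le \|\varphi_n^{(M,\partial M)}\|\cdot \gsv{M,\partial M}^F$.

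The only mildly delicate point — and the one I would write out most carefully — is the reduction of $\|\psi_n^M\|$ on an arbitrary space to the single number $\|\psi_n^{\Delta^n}\|$ via naturality; in the relative and locally finite cases one must additionally check that the induced maps on quotients respectively on Cauchy families do not increase norms (which is immediate for quotient semi-norms and follows for locally finite chains from the fact that each $\overline c_K$ is mapped with the same degree-wise bound, so the limit of the norms is controlled). Everything else is a direct application of the triangle inequality and the definitions of the induced semi-norms on homology and of generalised simplicial volume.
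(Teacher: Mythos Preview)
Your proposal is correct and follows essentially the same approach as the paper: both argue the second inequality directly from the norm bound on~$\varphi_n^{(M,\partial M)}$, and for the first inequality both exploit naturality of~$\psi$ to factor~$\psi_n^M(\sigma) = F(\sigma)\bigl(\psi_n^{\Delta^n}(\id_{\Delta^n})\bigr)$ and then use that $F(\sigma)$ has norm at most~$1$. The paper carries out the computation directly in the relative setting (taking a chain~$c \in C_n(M;\R)$ with~$\partial c \in C_{n-1}(\partial M;\R)$) rather than reducing to the absolute case first, and for part~(2) it simply remarks that continuous linear maps send Cauchy families to Cauchy families, which is exactly the point you identify.
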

\begin{proof}
  \emph{Ad~1.}  The estimate $\sv{M,\partial M} \leq
  \|\varphi_n^{(M,\partial M)}\| \cdot \gsv{M,\partial M}^F$ easily
  follows from the definitions.  Conversely, let $c = \sum_{j=1}^k a_j
  \cdot \sigma_j \in C_n(M;\R)$ be a chain with~$\partial c \in
  C_{n-1}(\partial M;\R)$ that represents~$\fclr{M,\partial M}$. 
  Then
  \begin{align*}
    \bigl| \psi_n^{(M,\partial M)}[c] \bigr|^F
    & \leq \sum_{j=1}^k |a_j| 
           \cdot \bigl|\psi_n^M(\sigma_j \circ \id_{\Delta^n})\bigr|^F
    = \sum_{j=1}^k |a_j| 
           \cdot \bigl|F(\sigma_j) (\psi_n^{\Delta^n}(\id_{\Delta^n}))\bigr|^F
    \\
    & \leq \sum_{j=1}^k |a_j| \cdot 1 \cdot \| \psi_n^{\Delta^n}\| \cdot 1
    \\
    & \leq \|\psi_n^{\Delta^n}\| \cdot \clone c. 
  \end{align*}
  Hence, $\gsv{M,\partial M}^F \leq \|\psi_n^{\Delta^n}\| \cdot \sv{M,\partial M}$. 

  \emph{Ad~2.} This follows via the same arguments as in the first
  part, keeping in mind that continuous linear maps map Cauchy
  families to Cauchy families.
\end{proof}

\begin{exa}\label{exa:equivalence}
  For example, the normed model of~$C_*(\args;\R)$
  on~$C_*^\triangle(\args;\R)$ given in Example~\ref{exa:degenerate}
  leads to the ordinary simplicial volume because the corresponding
  chain homotopy equivalences all have norm at most~$1$.
\end{exa}

In particular, vanishing of generalised simplicial volumes is 
equivalent for all normed models of~$C_*(\args;\R)$. 
In contrast, it is not clear that sub-additivity properties 
of type~$\gsv M ^F \leq \gsv {N_1}^F + \gsv{N_2} ^F$ are also 
inherited in the same way.

The whole point of introducing the framework of normed models of the
singular chain complex and the discussion in Section~\ref{sec:glue} is
that we can in fact prove inheritance of sub-additivity in
sufficiently benign situations.

\subsection{The uniform boundary condition}

We now review the uniform boundary condition~\cite{mm}, which will
play an important role in our gluing constructions below.

\begin{defi}[UBC]\label{def:ubc}
  Let $q \in \N$.
  \begin{itemize}
    \item A normed chain complex~$(C,|\cdot|)$ satisfies the \emph{uniform boundary 
      condition in degree~$q$} (\emph{$q$-UBC}, for short) if there exists 
      a~$\kappa \in \R_{\geq 0}$ with the following property:
      For all~$z \in \im \partial_{q+1} \subset C_q$ there is 
      a~$b \in C_{q+1}$ with
      \[ \partial_{q+1} (b) = z 
         \quad\text{and}\quad
         |b| \leq \kappa \cdot |z|.
      \]
    \item Let $F \colon \Top \longrightarrow \nCh$ be a functorial normed 
      chain complex. A space~$X$ \emph{satisfies $q$-UBC with respect to~$F$} 
      if the normed chain complex~$F(X)$ satisfies $q$-UBC.
  \end{itemize}
\end{defi}

For example, the singular chain complex~$C_*(X;\R)$ satisfies the
uniform boundary condition in all degrees if $X$ is a connected
CW-complex with amenable fundamental group~\cite{mm}. This includes,
in particular, all tori and all simply connected spaces.

\begin{prop}[UBC inheritance]
  Let $(F,\varphi,\psi)$ be a normed model of the singular chain
  complex, let $q \in \N$, and let $X$ be a topological space. Then
  $X$ satisfies $q$-UBC with respect to the trivial
  model~$C_*(\args;\R)$ if and only if $X$ satisfies $q$-UBC with
  respect to~$F$.
\end{prop}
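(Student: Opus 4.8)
The plan is to transfer the uniform boundary condition across the natural continuous chain maps $\varphi^X$ and $\psi^X$, using the natural normed chain homotopies that witness them being mutually inverse. The crucial observation is that every map in sight --- $\varphi^X$, $\psi^X$, and the homotopies $h^X$ relating $\psi^X \circ \varphi^X$, $\varphi^X \circ \psi^X$ to the respective identities --- is a \emph{continuous} linear map in each degree, hence has finite operator norm; let me write $s := \|\varphi_{q+1}^X\|$, $t := \|\psi_q^X\|$, $t' := \|\psi_{q+1}^X\|$, and let $\lambda$ be a bound on the operator norm of the relevant homotopy maps in degrees $q$ and $q+1$. These constants depend on $X$ and $q$, but that is all we need.

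First I would prove the direction ``$C_*(X;\R)$ is $q$-UBC $\Rightarrow$ $F(X)$ is $q$-UBC'', with the symmetric direction being literally the same argument with the roles of $\varphi$ and $\psi$ interchanged. So suppose $C_*(X;\R)$ satisfies $q$-UBC with constant $\kappa$, and let $z \in \operatorname{im}\partial_{q+1} \subset F_q(X)$, say $z = \partial_{q+1} w$ for some $w \in F_{q+1}(X)$. Then $\varphi_q^X(z) = \partial_{q+1}(\varphi_{q+1}^X(w))$ lies in $\operatorname{im}\partial_{q+1}\subset C_q(X;\R)$, so there is a $b \in C_{q+1}(X;\R)$ with $\partial_{q+1} b = \varphi_q^X(z)$ and $\clone{b} \leq \kappa\cdot\clone{\varphi_q^X(z)} \leq \kappa\cdot\|\varphi_q^X\|\cdot|z|^F$. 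Now push $b$ back into $F(X)$ by setting $b' := \psi_{q+1}^X(b)$. This does not yet have the right boundary; rather, using the natural normed chain homotopy $h^X \colon \psi^X\circ\varphi^X \simeq \id_F$ in degree $q$ we compute
\[
  \partial_{q+1}(b')
  = \psi_q^X(\partial_{q+1} b)
  = \psi_q^X\bigl(\varphi_q^X(z)\bigr)
  = z - \partial_{q+1}\bigl(h_q^X(z)\bigr) - h_{q-1}^X(\partial_q z).
\]
Since $z$ is a boundary it is a cycle, so $\partial_q z = 0$ and the last term vanishes. Hence $b'' := b' + h_q^X(z)$ satisfies $\partial_{q+1}(b'') = z$, and
\[
  |b''|^F \leq \|\psi_{q+1}^X\|\cdot\clone{b} + \|h_q^X\|\cdot|z|^F
  \leq \bigl(\|\psi_{q+1}^X\|\cdot\kappa\cdot\|\varphi_q^X\| + \|h_q^X\|\bigr)\cdot|z|^F,
\]
which is the desired uniform bound with the new constant $\kappa' := \|\psi_{q+1}^X\|\cdot\kappa\cdot\|\varphi_q^X\| + \|h_q^X\|$.

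The reverse implication is obtained by the same computation after swapping $\varphi\leftrightarrow\psi$ and invoking the other natural normed chain homotopy $\varphi^X\circ\psi^X\simeq\id_{C_*(\args;\R)}$. I do not anticipate a genuine obstacle here: the only thing one must be careful about is to use the \emph{cycle} hypothesis on $z$ to kill the $h_{q-1}^X(\partial_q z)$ term (otherwise the chain homotopy identity would not produce an exact preimage), and to remember that continuity of all the structure maps is exactly what guarantees the operator norms entering $\kappa'$ are finite --- this is precisely why the definition of normed model insists on degreewise continuity of $\varphi$, $\psi$ and the homotopies rather than merely norm $\leq 1$.
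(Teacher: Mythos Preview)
Your argument is correct and essentially identical to the paper's: both push a boundary across via~$\varphi^X$, invoke $q$-UBC there, pull back via~$\psi^X$, and then correct with the chain homotopy~$h_q^X(z)$, arriving at the same constant $\|\psi_{q+1}^X\|\cdot\kappa\cdot\|\varphi_q^X\| + \|h_q^X\|$. The only cosmetic difference is the sign convention for the homotopy (you take $b'' = b' + h_q^X(z)$, the paper takes $b' - h_q(z)$), which is immaterial since $\partial z = 0$.
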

\begin{proof}
  This is a straightforward calculation: Because the situation
  basically is symmetric, we only show that if $C_*(X;\R)$ satisfies
  $q$-UBC, then also $F(X)$ satisfies $q$-UBC. 

  Let $\kappa \in\R_{\geq 0}$ be a constant witnessing that
  $C_*(X;\R)$ satisfies $q$-UBC and let $z \in
  \partial(F_{q+1}(X))$. Then $z' := \varphi_q^X(z)$ lies
  in~$\partial(C_{q+1}(X))$ and in view of the uniform boundary
  condition there exists a chain~$b' \in C_{q+1}(X;\R)$ with
  \[ \partial b' = z'
     \quad\text{and}\quad
     \clone{b'} \leq \kappa \cdot \clone{z'}.
  \]
  
  Let $h_* \colon \psi^X \circ \varphi^X \simeq \id_{F(X)}$ be a 
  chain homotopy that is continuous in every degree and let
  \[ b := \psi_{q+1}^X(b') - h_q(z)
     \in F_{q+1}(X). 
  \]
  Then 
  \[ \partial b = \psi_{q}^X (z') - \psi_q^X \circ \varphi_q^X(z) + z 
                - h_{q-1} (\partial (z))  
                = z
  \]
  and
  \[ |b|^F \leq \bigl(\| \psi_{q+1}^X \| \cdot \kappa \cdot \|\varphi_q^X\| 
                     + \| h_q\| \bigr)
                \cdot |z|^F.
  \]
  Hence, $F(X)$ satisfies $q$-UBC.
  
  Alternatively, one can also use the characterisation of the uniform
  boundary condition in terms of bounded cohomology~\cite{mm} to prove
  this inheritance statement (because normed models are set up in such
  a way that they yield naturally continuously isomorphic bounded
  cohomology theories).
\end{proof}

Hence, we can just say that a space satisfies $q$-UBC without
specifying a normed model of the singular chain complex (the
UBC constants, however, in general will depend on the chosen model).

\section{Cubical models of the singular chain complex}\label{sec:cubicalmodels}

We will now give general bounds between ordinary and cubical
simplicial volume, based on natural chain homotopy equivalences with
controlled norms. We formulate these results in terms of normed models
of the singular chain complex developed in Section~\ref{sec:models}.

\subsection{Cubical models}

We introduce the following shorthand:

\begin{defi}[cubical model]
  A \emph{cubical normed model of~$C_*(\args;\R)$} is a
  normed model~$(F,\varphi,\psi)$ of the singular chain complex
  with~$F = C_*^\square(\args;\R)$ that satisfies the normalisation
  \[ \varphi_0^{\square^0}(\id_{\square^0}) = (1 \mapsto 0) 
     \colon \Delta^0 \longrightarrow \square^0.
  \]
\end{defi}

This normalisation rules out unwanted scaling:

\begin{rem}\label{rem:normalisation}
  Let $(F,\varphi,\psi)$ be a cubical normed model of~$C_*(\args;\R)$.
  Then the acyclic models theorem~\cite{eilenberg}, the normalisation
  condition, and the local characterisation of fundamental classes
  show that for all oriented manifolds the cubical fundamental class
  and the ordinary fundamental class are mapped to each other by the
  given chain homotopy equivalences. Thus, the $F$-simplicial volume
  coincides with cubical simplicial volume (also for the
  straightforward adaptions to the relative and the non-compact case);
  so, the slightly sloppy term ``$F$-simplicial volume'' (without
  reference to~$\varphi$ or~$\psi$) is not ambiguous in this case.
\end{rem}

\subsection{Construction of cubical models}

As next step, we show that cubical models indeed exist. 

\begin{prop}\label{prop:cubex}
  There exist cubical normed models of~$C_*(\args;\R)$.
\end{prop}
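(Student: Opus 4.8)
The plan is to build the two chain maps $\varphi\colon C_*^\square(\args;\R)\Longrightarrow C_*(\args;\R)$ and $\psi\colon C_*(\args;\R)\Longrightarrow C_*^\square(\args;\R)$ by the method of acyclic models, which is exactly the framework in which the isomorphism $H_*^\square\cong H_*$ is classically established. First I would recall that both $C_*(\args;\R)$ and $C_*^\square(\args;\R)$ are functors $\Top\longrightarrow\nCh$ that are free on the models $\{\Delta^n\}_{n\in\N}$ and $\{\square^n\}_{n\in\N}$ respectively, and that both are acyclic on each standard simplex and on each standard cube (since these spaces are contractible). The key point to watch is that acyclic models produces chain maps and chain homotopies whose components on a given space $X$ are \emph{finite} $\R$-linear combinations of composites of the form $X$-singular simplex/cube precomposed with a fixed universal chain on $\Delta^n$ or $\square^n$; such maps are automatically continuous (indeed bounded) in each degree, because they are finite sums with fixed coefficients. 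Thus every map the acyclic models machine hands us is a \emph{natural continuous} chain map, and every homotopy is a \emph{natural normed} chain homotopy in the sense of Definition~\ref{def:gnsch}.

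Concretely, the steps would be: (i) Define $\psi$ by choosing, for the generator $\id_{\Delta^n}$, a cubical chain $\psi_n^{\Delta^n}(\id_{\Delta^n})\in C_n^\square(\Delta^n;\R)$ compatibly with boundaries, using acyclicity of $C_*^\square(\Delta^n;\R)$ to solve the resulting inductive equations; extend naturally. A clean explicit choice is the standard ``prism'' (or triangulation-of-the-cube-in-reverse) subdivision of $\Delta^n$ by cubes, or one can stay abstract. (ii) Define $\varphi$ dually: on the generator $\id_{\square^n}$ choose $\varphi_n^{\square^n}(\id_{\square^n})\in C_n(\square^n;\R)$ inductively using acyclicity of $C_*(\square^n;\R)$, and impose the normalisation $\varphi_0^{\square^0}(\id_{\square^0})=(1\mapsto 0)$ at the base of the induction — this is a legal choice because in degree $0$ there is no boundary constraint, and it propagates to all higher degrees automatically. (iii) Check that $\psi\circ\varphi$ and $\varphi\circ\psi$ are naturally normed chain homotopic to the respective identities: again acyclic models applies, comparing two natural chain maps between these free/acyclic functors that agree in degree $0$, yielding natural chain homotopies whose degree-wise components are finite linear combinations and hence continuous. (iv) Observe that $\varphi$ descends to $C_*^\square=Q_*/D_*$: one must verify $\varphi$ kills degenerate cubes, which holds because a degenerate cube factors through a lower-dimensional cube and the universal chain $\varphi_n^{\square^n}(\id_{\square^n})$ can be (and must be) chosen to be a \emph{degenerate}, i.e.\ lower-dimensional, chain on such factored inputs — alternatively, quotient after the fact using that $D_*(\args;\R)$ is an acyclic subcomplex and $\varphi(D_*)\subset E_*$-type degenerate simplices which vanish in homology, adjusting $\varphi$ by a homotopy.

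I expect the main obstacle to be the bookkeeping around \emph{degeneracies and the normalisation}: acyclic models in its crude form lives on $Q_*$ (free on singular cubes) rather than on the quotient $C_*^\square$, so one has to be slightly careful that $\varphi$, $\psi$, and the homotopies all respect $D_*(\args;\R)$, and simultaneously that $\varphi$ satisfies the prescribed value in degree $0$ without breaking naturality. This is a routine but genuinely fiddly induction; the cleanest route is probably to run acyclic models on the non-degenerate models $\{\square^n\}$ with the convention that all chosen universal chains on a degenerate input are set to be the corresponding lower-dimensional chain, so that passage to $Q_*/D_*$ is automatic. Everything else — continuity in each degree, the norm-at-most-$1$ condition on the chain-map \emph{components} being irrelevant (only naturality and continuity are required of $\varphi,\psi$ by Definition~\ref{def:gnsch}, with norm-$\le 1$ needed only for the functor's image of continuous maps, which holds for $C_*^\square$ on the nose), and the homotopies being continuous — follows formally from the finite-linear-combination structure of acyclic-model constructions. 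Hence $(C_*^\square(\args;\R),\varphi,\psi)$ is a cubical normed model of~$C_*(\args;\R)$.
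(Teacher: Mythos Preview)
Your proposal is correct and follows essentially the same approach as the paper: both rely on the acyclic models theorem to produce $\varphi$, $\psi$, and the natural chain homotopies, and both observe that continuity in each degree is automatic because the resulting maps are determined by the finite universal chains on the model spaces via naturality.

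The one point worth flagging is your step~(iv). The paper sidesteps the degeneracy bookkeeping you rightly identify as the main obstacle by routing through the intermediate functor~$C_*^\triangle(\args;\R)$ of Example~\ref{exa:degenerate}: it first cites Eilenberg--MacLane for a natural chain map $C_*^\square(\args;\R)\Longrightarrow C_*^\triangle(\args;\R)$ satisfying the normalisation (where the degeneracy compatibility has already been handled in the literature), composes with the equivalence $C_*^\triangle(\args;\R)\simeq C_*(\args;\R)$, and then invokes a separate lemma (Lemma~\ref{lem:acyclicmodels}) to produce~$\psi$ and the homotopies. Your direct route is fine, but your ``alternatively, quotient after the fact \dots\ adjusting $\varphi$ by a homotopy'' is not a valid fix: you need $\varphi$ to be a genuine chain map on the quotient, not merely one up to homotopy. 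The clean way is the one you sketch just before that: run acyclic models for the already-quotiented functor~$C_*^\square(\args;\R)$, which is free on the classes~$[\id_{\square^n}]$ and acyclic on contractible spaces, so the machinery applies without further ado.
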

\begin{proof}
  Via acyclic models, Eilenberg and Mac~Lane~\cite{eilenberg}
  constructed a natural chain map~$C_*^\square(\args;\R)
  \Longrightarrow C_*^\triangle(\args;\R)$ that satisfies the
  normalisation condition ($C_*^\triangle(\args;\R)$ was defined
  in Example~\ref{exa:equivalence}). In combination with the natural
  chain homotopy equivalence~$C_*^\triangle(\args;\R) \simeq
  C_*(\args;\R)$, this produces a natural chain
  map~$C_*^\square(\args;\R) \Longrightarrow C_*(\args;\R)$. Applying
  Lemma~\ref{lem:acyclicmodels} below then proves the claim.
\end{proof}

\begin{lem}\label{lem:acyclicmodels}
  Let $\varphi \colon C_*^\square(\args;\R) \Longrightarrow
  C_*(\args;\R)$ be a natural chain map with
  \[ \varphi_0^{\square^0}(\id_{\square^0}) = (1 \mapsto 0). 
  \]
  Then $\varphi$ is continuous in the sense of
  Definition~\ref{def:gnsch} and there exists a natural continuous
  chain map~$\psi \colon C_*(\args;\R) \Longrightarrow C_*^\square(\args;\R)$
  such that $(C_*^\square(\args;\R),\varphi,\psi)$ is a cubical normed
  model of~$C_*(\args;\R)$.
\end{lem}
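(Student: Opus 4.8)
The plan is to verify the two separate claims — continuity of $\varphi$, and existence of a suitable $\psi$ — both by the acyclic models method, exploiting that the functors involved are free and acyclic on the standard models $\{\square^n\}$ and $\{\Delta^n\}$.

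First I would check that $\varphi$ is continuous in every degree. The point is that any natural chain map out of $C_*^\square(\args;\R)$ is determined by its values on the ``universal'' cubes $\id_{\square^n} \in C_n^\square(\square^n;\R)$: for an arbitrary singular cube $c \colon \square^n \to X$ we have $c = C_*^\square(c)(\id_{\square^n})$, hence by naturality $\varphi_n^X(c) = C_*(c)(\varphi_n^{\square^n}(\id_{\square^n}))$. Since $C_*(c)$ has operator norm at most~$1$ with respect to the $\ell^1$-norms, we get $\clone{\varphi_n^X(c)} \le \clone{\varphi_n^{\square^n}(\id_{\square^n})} =: b_n < \infty$, and therefore $\|\varphi_n^X\| \le b_n$ uniformly in~$X$. (Here one must note that $\varphi_n^X$ kills degenerate cubes, so it is well-defined on $C_n^\square(X;\R)$; this is automatic because a degenerate cube factors through a lower-dimensional cube together with a degeneracy map, and $C_*(\args;\R)$ sends the relevant identity to a degenerate simplex, but in fact it suffices that $\varphi$ is already defined on $C_*^\square$.) This settles continuity of $\varphi$.

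Next I would construct $\psi$ and the two chain homotopies by acyclic models, as in Eilenberg--Mac~Lane~\cite{eilenberg}. Both $C_*(\args;\R)$ and $C_*^\square(\args;\R)$ are free functors with models $\{\Delta^n\}_{n\in\N}$ and $\{\square^n\}_{n\in\N}$ respectively (a basis of $C_n(X;\R)$ is $\{C_*(\sigma)(\id_{\Delta^n}) : \sigma\}$, similarly for cubes), and both are acyclic on all the standard models (simplices and cubes are contractible). Hence there exists a natural chain map $\psi \colon C_*(\args;\R) \Longrightarrow C_*^\square(\args;\R)$ inducing the identity on $H_0$, unique up to natural chain homotopy; and natural chain homotopies $h \colon \psi\circ\varphi \simeq \id$ and $h' \colon \varphi\circ\psi \simeq \id$ exist because both composites are natural chain maps inducing the identity on $H_0$ (for the latter one needs the normalisation $\varphi_0^{\square^0}(\id_{\square^0}) = (1\mapsto 0)$ to pin down the $H_0$-behaviour — without it $\varphi$ could scale on $H_0$). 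The same universal-element argument as above shows $\psi$, $h$, $h'$ are all continuous in every degree, with norm bounds $\|\psi_n^X\| \le \cdlone{\psi_n^{\Delta^n}(\id_{\Delta^n})}$, etc. Thus $(C_*^\square(\args;\R),\varphi,\psi)$ is a normed model, and by construction $\varphi$ satisfies the normalisation condition, so it is a cubical normed model.

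The main obstacle, such as it is, is bookkeeping rather than substance: one has to be careful that all the acyclic-models constructions can be carried out \emph{naturally} with respect to \emph{all} continuous maps (not just the model maps), and that the resulting natural transformations are automatically continuous with uniform norm bounds — which, as indicated, follows from the freeness of the functors on contractible models combined with the operator-norm-$\le 1$ property of induced maps. A secondary subtlety is the role of the normalisation condition: it is exactly what is needed so that $\varphi\circ\psi$ and $\psi\circ\varphi$ agree with the identity on $H_0$, which is the hypothesis of the acyclic models comparison theorem that produces the homotopies $h$, $h'$. Everything else is routine.
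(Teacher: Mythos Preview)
Your argument is correct and follows essentially the same route as the paper: continuity of~$\varphi$ (and later of~$\psi$ and the homotopies) via the universal-element estimate~$\|\varphi_n^X\| \le \clone{\varphi_n^{\square^n}(\id_{\square^n})}$, and existence of~$\psi$ together with the natural chain homotopies via acyclic models. One small slip: the bound for~$\psi$ should read~$\|\psi_n^X\| \le \qclone{\psi_n^{\Delta^n}(\id_{\Delta^n})}$ (cubical norm, not~$\cdlone\cdot$), and your parenthetical about degenerate cubes is unnecessary since~$\varphi$ is already given on~$C_*^\square$.
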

\begin{proof}
  The acyclic models technique shows that there is a natural chain
  map~$\psi \colon C_*(\args;\R) \Longrightarrow
  C_*^\square(\args;\R)$ such that $\varphi \circ \psi$ and $\psi
  \circ \varphi$ are naturally chain homotopic to the
  identity~\cite{eilenberg}. 

  Because $C_*^\square(\args;\R)$ is basically generated
  by~$(\id_{\square^n})_{n \in \N}$ and naturality, the same arguments
  as in the proof of Proposition~\ref{prop:equivalence} show that
  $\varphi$ and $\psi$ are continuous in the sense of
  Definition~\ref{def:gnsch} and that also the corresponding natural
  chain homotopies are continuous. More precisely,  
  \[ \| \varphi_n^X \| \leq \|\varphi_n^{\square^n}\| 
     = \clone{\varphi_n^{\square^n}(\id_{\square^n})} 
     \quad\text{and}\quad
     \| \psi_n^X \| \leq \|\psi_n^{\Delta^n}\| 
     = \qclone{\psi_n^{\Delta^n}(\id_{\Delta^n})},
  \]
  for all spaces~$X$ and all~$n \in \N$.  
  Hence, $(C_*^\square(\args;\R),\varphi,\psi)$ is a cubical normed
  model of~$C_*(\args;\R)$.
\end{proof}

We will now give more explicit bounds:

\begin{rem}\label{rem:eilenbergbound}
 The construction of Eilenberg and Mac~Lane~\cite{eilenberg} produces
 a chain homotopy inverse~$\psi$ with $\|\psi_n^{\Delta^n}\| \leq 1$
 for all~$n \in \N$.
\end{rem}

Conversely, an explicit construction of a natural chain
map 
\[ \varphi' \colon C_*^\square(\args;\R) \Longrightarrow C_*^\triangle(\args;\R)
\] 
that satisfies the normalisation condition can be obtained by an
inductive triangulation procedure via cones:

  \begin{figure}
    \begin{center}
      \begin{tikzpicture}[thick]
        \fill[black!10] (-1.5,0) -- (1.5,0) -- (0,2) -- cycle;
        \foreach \i in {-1.5,-1.25,...,1.5}
        { \draw[black!30] (\i,0) -- (0,2);
        }
        \draw (-1.5,0) -- (1.5,0);
        \fill (0,2) circle (0.1);
        \draw (0,0) node[anchor=north] {$\sigma$};
        \draw (0,2) node[anchor=south] {$v$};
        \draw (1.5,1) node {$\sigma*v$};
      \end{tikzpicture}
    \end{center}

    \caption{Cone~$\sigma * v$ of a simplex~$\sigma$ with respect to~$v$}
    \label{fig:cone}
  \end{figure}
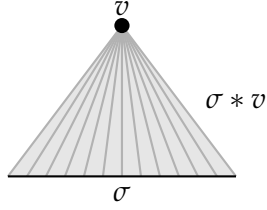

  \begin{defi}[cones of simplices]
    Let $K \subset \R^d$ be a convex subset, let $\sigma \colon
    \Delta^k \longrightarrow K$ be a singular simplex on~$K$ and let
    $v \in K$. Then we define the \emph{cone~$\sigma * v$ of~$\sigma$
      with respect to~$v$} (Figure~\ref{fig:cone}) as the singular simplex
    \begin{align*}
      \sigma * v \colon \Delta^{k+1} & \longrightarrow K \\
      (t_0, \dots, t_{k+1}) & \longmapsto 
      t_{k+1} \cdot v 
      + (1-t_{k+1}) \cdot 
        \sigma \Bigl(\frac1{1-t_{k+1}} \cdot t_0, \dots, 
                     \frac1{1-t_{k+1}} \cdot t_k \Bigr).
    \end{align*}
    This construction extends linearly to all of~$C_k(K;\R)$ and
    passes to~$C_k^\triangle(K;\R)$; these extensions will also be
    denoted by~$\args * v$.
  \end{defi}

  Roughly speaking, we construct triangulations of cubes by first
  triangulating the faces and then coning these simplices with respect
  to the centre of the cube (Figure~\ref{fig:cubestriang}). Using
  naturality, this can be extended to the whole cubical singular chain
  complex.

  We begin by defining $\varphi'_0{}^X$ by
  setting~$\varphi'_0{}^{\square^0}(\id_{\square^0}) := (1 \mapsto 0)$ and
  extending~$\varphi'_0$ to all of~$C_0^\square(\args;\R)$ by
  naturality.

  \label{p:iteratedconing}
  Proceeding inductively, for~$n \in \N$ we assume that
  $\varphi'_n{}^X$ is already constructed for all spaces~$X$ and then set
  \begin{align*}
    \varphi'_{n+1}{}^{X} \colon C_{n+1}^\square(X;\R) &\longrightarrow 
                         C_{n+1}^\triangle(X;\R)
    \\
    (\tau \colon \square^{n+1} \rightarrow X)
    & \longmapsto
    C_{n+1}^\triangle(\tau;\R)
    \biggl( \sum_{j=1}^{n+1} (-1)^{j}\cdot\varphi'_n{}^{\square^n} (\square^{n+1}_{(j,0)} - \square^{n+1}_{(j,1)}) * v_{n+1}
    \biggr);
  \end{align*}
  
  here, $\square^{n+1}_{(j,0)}, \square^{n+1}_{(j,1)} \colon \square^n \longrightarrow
  \square^{n+1}$ denote the face maps of~$\square^{n+1}$ and 
  \[ v_{n+1} := \Bigl(\frac12,\dots,\frac12\Bigr) \in \square^{n+1} \subset \R^{n+1}
  \]
  is the centre of the cube~$\square^{n+1}$. It is not hard to show
  that $\varphi'{}^X$ indeed is a well-defined chain map.

  Because the $n$-cube has exactly~$2 \cdot n$~faces, we inductively obtain 
  \[ \|\varphi'_n{}^X\| \leq 2^n \cdot n!
  \]
  for all spaces~$X$ and all~$n \in \N$.

  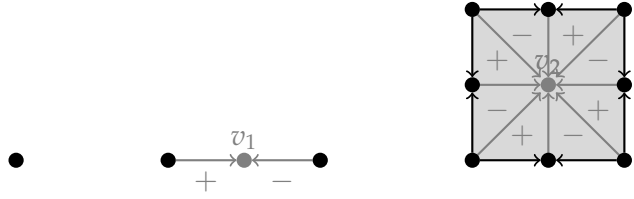
\begin{figure}
    \begin{center}
      \begin{tikzpicture}[thick]
        \fill (0,0) circle (0.1);
        \begin{scope}[shift={(2,0)}]
          \begin{scope}[black!50]
            \draw (1,0) node[anchor=south] {$v_1$};
            \draw (0.5,0) node[anchor=north] {$+$};
            \draw (1.5,0) node[anchor=north] {$-$};
            \draw[->] (0.1,0) -- (0.9,0);
            \draw[->] (1.9,0) -- (1.1,0);
            \fill (1,0) circle (0.1);
          \end{scope}
          \fill (0,0) circle (0.1);
          \fill (2,0) circle (0.1);
        \end{scope}
        \begin{scope}[shift={(6,0)}]
          \begin{scope}[black!15]
            \fill (0,0) rectangle (2,2);
          \end{scope}
          \draw[black!50] (1,1) node[anchor=south] {$v_2$};
          \begin{scope}[black!50]
            \draw[black!50] (1,1) node[anchor=south] {$v_2$};
            \fill (1,1) circle (0.1);
            \draw [->] (0,0) -- (0.9,0.9);
            \draw [->] (1,0) -- (1,0.9);
            \draw [->] (2,0) -- (1.1,0.9);
            \draw [->] (2,1) -- (1.1,1);
            \draw [->] (2,2) -- (1.1,1.1);
            \draw [->] (1,2) -- (1,1.1);
            \draw [->] (0,2) -- (0.9,1.1);
            \draw [->] (0,1) -- (0.9,1);            
            \draw (0.66,0.33) node {$+$};
            \draw (1.33,0.33) node {$-$};
            \draw (1.66,0.66) node {$+$};
            \draw (1.66,1.33) node {$-$};
            \draw (1.33,1.66) node {$+$};
            \draw (0.66,1.66) node {$-$};
            \draw (0.33,1.33) node {$+$};
            \draw (0.33,0.66) node {$-$};
          \end{scope}
          \def\triang{%
            \draw[->] (0.1,0) -- (0.9,0);
            \draw[->] (1.9,0) -- (1.1,0);
            \fill (1,0) circle (0.1);}
          \triang
          \begin{scope}[rotate={90}]
            \triang
          \end{scope}
          \begin{scope}[shift={(2,0)}]
            \begin{scope}[rotate={90}]
              \triang
            \end{scope}
          \end{scope}
          \begin{scope}[shift={(0,2)}]
            \triang
          \end{scope}
          \fill (0,0) circle (0.1);
          \fill (2,0) circle (0.1);
          \fill (0,2) circle (0.1);
          \fill (2,2) circle (0.1);
        \end{scope}
      \end{tikzpicture}
    \end{center}

    \caption{Triangulations of cubes via iterated cones, in dimension~$0,1,2$}
    \label{fig:cubestriang}
  \end{figure}

\begin{cor}\label{cor:svvsqsv}
  For all oriented compact $n$-manifolds~$M$ we have
  \[ \frac1{2^n \cdot n!} \cdot \sv{M,\partial M} 
     \leq \qsv{M,\partial M} \leq \sv{M,\partial M}
  \]
  and for all oriented $n$-manifolds~$M$ without boundary we have
  \[ \frac1{2^n \cdot n!} \cdot \svlf{M^\circ} 
     \leq \qsvlf{M^\circ} \leq \svlf{M^\circ}.
  \]
\end{cor}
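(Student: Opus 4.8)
The plan is to combine the two explicit normed models constructed in this section with the equivalence statement of Proposition~\ref{prop:equivalence}, applied to the cubical model. First I would recall that by Proposition~\ref{prop:cubex} and Lemma~\ref{lem:acyclicmodels} there exists a cubical normed model $(C_*^\square(\args;\R),\varphi,\psi)$ of~$C_*(\args;\R)$, and that by Remark~\ref{rem:normalisation} the associated $F$-simplicial volume coincides with cubical simplicial volume~$\qsv\cdot$ (in the absolute, relative, and locally finite cases alike). So it suffices to bound the operator norms $\|\psi_n^{\Delta^n}\|$ and $\|\varphi_n^{(M,\partial M)}\|$ (respectively $\|\varphi_n^{\mathrm{lf},M}\|$) that appear in Proposition~\ref{prop:equivalence}, for a suitably chosen cubical model.

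For the \emph{upper} bound $\qsv{M,\partial M}\le\sv{M,\partial M}$, I would use the cubical model whose chain homotopy inverse~$\psi$ is the one produced by Eilenberg and Mac~Lane: by Remark~\ref{rem:eilenbergbound} it satisfies $\|\psi_n^{\Delta^n}\|\le 1$ for all~$n$. Plugging this into the first inequality of Proposition~\ref{prop:equivalence}(1) gives $\qsv{M,\partial M}=\gsv{M,\partial M}^{C_*^\square}\le 1\cdot\sv{M,\partial M}$; the locally finite version follows the same way from Proposition~\ref{prop:equivalence}(2). For the \emph{lower} bound, I would instead feed the explicit map~$\varphi'$ obtained by the iterated-coning triangulation into Lemma~\ref{lem:acyclicmodels} (composed with the norm-nonincreasing equivalence $C_*^\triangle(\args;\R)\simeq C_*(\args;\R)$ of Example~\ref{exa:degenerate}), so that we get a cubical model whose $\varphi$ satisfies, by the bound $\|\varphi_n'{}^X\|\le 2^n\cdot n!$ displayed on page~\pageref{p:iteratedconing} together with $\|\varphi_n^X\|\le\|\varphi_n^{\square^n}\|$ from Lemma~\ref{lem:acyclicmodels}, the estimate $\|\varphi_n^{(M,\partial M)}\|\le 2^n\cdot n!$ (the quotient norm on the relative complex only decreases this). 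The second inequality of Proposition~\ref{prop:equivalence}(1) then yields $\sv{M,\partial M}\le 2^n\cdot n!\cdot\qsv{M,\partial M}$, i.e.\ $\tfrac1{2^n\cdot n!}\cdot\sv{M,\partial M}\le\qsv{M,\partial M}$; again the non-compact case is identical, using Proposition~\ref{prop:equivalence}(2).

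The one point that needs care — and that I would flag as the main obstacle — is that the two inequalities are proved using \emph{two different} cubical models, yet $\qsv\cdot$ is a single invariant. This is exactly what Remark~\ref{rem:normalisation} resolves: because every cubical model (by the normalisation condition and the acyclic models theorem) sends the cubical fundamental class to the ordinary one and back, \emph{all} cubical models give rise to the very same generalised simplicial volume, which equals~$\qsv\cdot$. Hence the upper bound obtained from the Eilenberg--Mac~Lane model and the lower bound obtained from the iterated-coning model are statements about the same quantity, and combining them gives the displayed chain of inequalities. Everything else is a routine substitution into Proposition~\ref{prop:equivalence}.
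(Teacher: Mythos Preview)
Your proposal is correct and uses the same ingredients as the paper: the iterated-coning triangulation to bound~$\|\varphi_n\|$, the Eilenberg--Mac~Lane construction to bound~$\|\psi_n\|$, and Proposition~\ref{prop:equivalence} to convert these into simplicial-volume inequalities. The only organisational difference is that you invoke \emph{two} cubical models and reconcile them via Remark~\ref{rem:normalisation}, whereas the paper observes directly that a \emph{single} cubical model carries both bounds: feeding the iterated-coning map~$\varphi'$ (composed with the equivalence of Example~\ref{exa:degenerate}) into Lemma~\ref{lem:acyclicmodels}, the $\psi$ produced by the acyclic models technique is exactly of Eilenberg--Mac~Lane type and hence satisfies $\|\psi_n^{\Delta^n}\|\le 1$ by Remark~\ref{rem:eilenbergbound}. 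This spares the detour through Remark~\ref{rem:normalisation}, but your route is equally valid.
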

\begin{proof}
  By Remark~\ref{rem:eilenbergbound} and the above explicit
  construction (as well as Example~\ref{exa:equivalence}), there
  exists a cubical normed model $(F,\varphi,\psi)$ of~$C_*(\args;\R)$
  that satisfies
  \[ \| \varphi_n^X\| \leq 2^n \cdot n!
     \quad\text{and}\quad 
     \|\psi_n^X\| \leq 1
  \]
  for all spaces~$X$ and all~$n \in \N$. 
  Therefore, Proposition~\ref{prop:equivalence} gives the desired
  bounds.
\end{proof}

A more careful analysis of known triangulations of cubes shows that
the lower bound in dimension~$n \in \N$ can be improved to
$2/({2^{n-1} + n!})$~\cite{waeber}. For simplicity, we will treat
such an improvement only in low dimensions.

\subsection{Cubical models in low dimensions}\label{subsec:subdivision}

We will now discuss improved lower bounds for cubical simplicial
volume in low dimensions.

In the case of surfaces~$M$, it is known that $\qsv M = 1/2 \cdot \sv
M$ holds~\cite{loehplankl}.

\begin{cor}\label{cor:3lowerbound}
  For all oriented compact $3$-manifolds~$M$ and for all oriented
  $3$-manifolds~$N$ without boundary we have
  \[ \qsv{M,\partial M} \geq  \frac15 \cdot \sv{M,\partial M}
     \quad\text{and}\quad
     \qsvlf{N} \geq \frac15 \cdot \svlf{N}.
  \]
\end{cor}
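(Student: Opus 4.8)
The plan is to produce a cubical normed model $(C_*^\square(\args;\R),\varphi,\psi)$ of $C_*(\args;\R)$ for which the norm of $\varphi$ in degree~$3$ is at most~$5$, and then feed this into Proposition~\ref{prop:equivalence} (in its relative and locally finite forms). Indeed, if $\|\varphi_3^X\|\le 5$ for all spaces~$X$, then $\|\varphi_3^{(M,\partial M)}\|\le 5$ and $\|\varphi_3^{\mathrm{lf},M}\|\le 5$, so Proposition~\ref{prop:equivalence} immediately gives $\sv{M,\partial M}\le 5\cdot\qsv{M,\partial M}$ and $\svlf N\le 5\cdot\qsvlf N$, which rearrange to the two claimed inequalities. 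So the whole content is the construction of a degree-wise sufficiently efficient $\varphi$ together with \emph{some} continuous chain homotopy inverse~$\psi$; by Lemma~\ref{lem:acyclicmodels}, once a natural chain map $\varphi\colon C_*^\square(\args;\R)\Longrightarrow C_*(\args;\R)$ satisfying the normalisation $\varphi_0^{\square^0}(\id_{\square^0})=(1\mapsto 0)$ is in hand, the existence of a suitable~$\psi$ (and the resulting cubical normed model) is automatic.

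The key point is the classical fact that the standard $n$-cube can be triangulated into $n!$ simplices (the simplices indexed by orderings of the coordinates), which for $n=3$ gives $6$ simplices — but one can do better in low degrees: $\square^3$ admits a triangulation into only $5$ simplices (one central regular-ish simplex together with four corner simplices). So the plan is: in degrees $0,1,2$ use the iterated-coning construction from page~\pageref{p:iteratedconing} (whose norms $1,1,2$ are already optimal and match $\|\varphi'_n{}^X\|\le 2^n\cdot n!$ with room to spare — in fact $|\Delta^2|$ needs $2$ simplices, so $\|\varphi_2^X\|\le 2$), and in degree~$3$ replace the coning recipe by a fixed choice, for $\id_{\square^3}$, of a decomposition of $\square^3$ into $5$ affine singular $3$-simplices that is compatible (on $\partial\square^3$) with $\varphi_2$ applied to the faces, i.e.\ $\partial\varphi_3^{\square^3}(\id_{\square^3})=\varphi_2^{\square^3}(\partial\,\id_{\square^3})$. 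Extend $\varphi_3$ to all singular $3$-cubes of all spaces by naturality, $\varphi_3^X(\tau):=C_3^\triangle(\tau;\R)(\varphi_3^{\square^3}(\id_{\square^3}))$ followed by the map to $C_3(\args;\R)$; one checks it kills degenerate cubes and that $\partial\varphi_3=\varphi_2\partial$ on $\id_{\square^3}$, hence everywhere by naturality. Since the $5$ simplices appear with coefficients $\pm1$, $\|\varphi_3^X\|\le 5$. In degrees $\ge 4$, revert to the iterated-coning construction to complete $\varphi$ to a full natural chain map; the higher-degree norms are irrelevant here.

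The main obstacle — and the only genuinely non-formal step — is exhibiting the $5$-simplex triangulation of $\square^3$ that is simultaneously \emph{a chain-level primitive}: we need an explicit $\varphi_3^{\square^3}(\id_{\square^3})\in C_3^\triangle(\square^3;\R)$, a signed sum of (at most) five affine $3$-simplices, whose simplicial boundary is exactly $\varphi_2^{\square^3}(\partial\,\id_{\square^3})$, the already-fixed triangulated boundary coming from the degree-$2$ coning construction. One must check that the boundary triangulation of $\partial\square^3$ induced by $\varphi_2$ (two triangles per square face, meeting the centre of each face) does bound such a $5$-simplex chain; concretely one takes the ``corner'' decomposition — e.g.\ the four tetrahedra at four alternate vertices of the cube together with the central tetrahedron on the remaining four vertices — and verifies, by a direct but finite computation of simplicial boundaries, that after possibly subdividing or re-coning on the two boundary triangles of each face to match $\varphi_2$ exactly, the total is still a $\pm1$-combination of five (or, if a small correction is needed, of a controlled number $\le 5$ of) simplices whose boundary cancels down to $\varphi_2^{\square^3}(\partial\,\id_{\square^3})$. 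This is essentially the low-dimensional case of the bound $2/(2^{n-1}+n!)$ attributed to~\cite{waeber}; for $n=3$ it specialises to $2/(4+6)=1/5$. Everything else — naturality, vanishing on degenerates, continuity in the sense of Definition~\ref{def:gnsch}, and the passage through Lemma~\ref{lem:acyclicmodels} and Proposition~\ref{prop:equivalence} — is routine.
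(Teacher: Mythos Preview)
Your overall strategy matches the paper's: construct a natural chain map $\varphi\colon C_*^\square(\args;\R)\Longrightarrow C_*^\triangle(\args;\R)$ with the right normalisation and with $\cdlone{\varphi_3^{\square^3}(\id_{\square^3})}\le 5$, then invoke Lemma~\ref{lem:acyclicmodels} and Proposition~\ref{prop:equivalence}. That reduction is correct and is exactly how the paper proceeds.

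However, there is a genuine gap at the ``only genuinely non-formal step,'' and it is larger than you indicate. First, a factual error: the iterated-coning construction on page~\pageref{p:iteratedconing} does \emph{not} have norms $1,1,2$ in degrees $0,1,2$; it has norms $1,2,8$ (see Figure~\ref{fig:cubestriang}: in degree~$1$ the interval is coned to its midpoint, giving two edges; in degree~$2$ each edge of the boundary is coned to the centre, giving eight triangles). So your description of the induced boundary triangulation --- ``two triangles per square face, meeting the centre'' --- is not the coning construction, nor is it any single coherent construction.

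More importantly, the compatibility you need between degrees~$2$ and~$3$ genuinely fails for \emph{any} naive choice. A natural $\varphi_2$ is determined by its value on $\id_{\square^2}$, so all six faces of $\square^3$ receive the \emph{same} triangulation pattern (transported by the face maps). But the five-tetrahedron decomposition of~$\square^3$ induces, on pairs of opposite faces, diagonal triangulations that go in \emph{opposite} directions; hence its chain-level boundary cannot equal $\varphi_2(\partial\,\id_{\square^3})$ for any single $\varphi_2^{\square^2}(\id_{\square^2})$ with $\pm1$ coefficients. Your hedge (``possibly subdividing or re-coning \dots\ the total is still \dots\ of a controlled number $\le 5$'') is precisely the point where the argument would break. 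In addition, one must separately arrange that $\varphi_3$ vanishes on \emph{degenerate} cubes, which the bare five-simplex chain does not obviously do.

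The paper resolves both obstructions simultaneously by \emph{symmetrising}: it first writes down crude maps~$T_j$ (the two-triangle diagonal for~$T_2$, the five-tetrahedron chain for~$T_3$), and then replaces each~$T_j$ by $\Sigma_j\circ T_j$, where $\Sigma_j$ averages over~$\Isom(\square^j)$ with signs. This averaging is norm non-increasing (so the degree-$3$ norm stays $\le 5$), forces the boundary compatibility $\partial_j\circ\Sigma_j\circ T_j=\Sigma_{j-1}\circ T_{j-1}\circ\partial^\square_j$ by a direct check, and, via the reflection along a direction of degeneracy, guarantees that degenerate cubes are sent to zero in~$C_*^\triangle$. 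This symmetrisation step is the missing idea in your proposal.
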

\begin{proof}
  As in the proof of Proposition~\ref{prop:cubex} and
  Corollary~\ref{cor:svvsqsv} it suffices to construct a natural chain
  map~$\varphi \colon C_*^\square(\args;\R) \Longrightarrow
  C_*^\triangle(\args;\R)$ that respects the normalisation and 
  satisfies~$\cdlone{\varphi_3^{\square^3}(\id_{\square^3})} \leq 5$.

  Moreover, in view of the cone construction (or the technique of
  acyclic models) as used in the previous section, it
  suffices to construct the first steps~$\varphi_0, \dots, \varphi_3$
  compatibly with the normalisation -- the higher degrees can then be
  added through iterated coning as on page~\pageref{p:iteratedconing}.

  We first define maps~$T_j \colon Q_j(\args;\R) \Longrightarrow
  C_j(\args;\R)$ for all~$j \in \{0,1,2,3\}$: Here, we use the
  following notation: For an ascending sequence~$n_1, n_2, \dots n_k$
  in~$\N_{>0}$, we write $e_{n_1, \dots, n_k} := \sum_{j=1}^k e_{n_j}
  \in \R^\infty$ for the corresponding sum of standard basis
  vectors. 

  Moreover, for~$v_0,\dots, v_n \in \R^\infty$ we
  write~$[v_0,\dots,v_n] \colon \Delta^n \longrightarrow \R^\infty$
  for the affine singular $n$-simplex with vertices~$v_0, \dots, v_n$
  (in this order).

  \emph{Dimension~$0$:} We set
  \[ T_0^{\square^0}(\id_{\square^0}) := [0] \colon \Delta^0 \longrightarrow \square^0
  \]
  and extend $T_0$ to all of~$Q_0(\args;\R)$ by naturality. Clearly,
  this map satisfies the normalisation condition.

  \emph{Dimension~$1$:} We set 
  \begin{align*}
    T_1^{\square^1}(\id_{\square^1}) 
    := [0,e_1]
    \colon \Delta^1 \longrightarrow \square^1 
  \end{align*}
  and extend~$T_1$ to all of~$Q_1(\args;\R)$ by naturality.
 
  \begin{figure}
    \begin{center}
        \def\cube{%
          \begin{scope}[black!50,thin]
            \draw (0,0) rectangle +(1,1);
          \end{scope}}
        \def\io{(1,0)}
        \def\oo{(0,0)}
        \def\ii{(1,1)}
        \def\oi{(0,1)}
        \def\tetra#1#2#3{%
          \cube
          \fill [black!20,opacity=0.6] #1 -- #2 -- #3 -- cycle; 
          \draw #1 -- #2 -- #3 -- cycle;
          \begin{scope}[shift={(-0.1,0)}]
            \draw #1 node {\tiny$0$};
            \draw #2 node {\tiny$1$};
            \draw #3 node {\tiny$2$};
          \end{scope}%
        }
      \begin{tikzpicture}[x=1.5cm,y=1.5cm,thick]
        \tetra{\oi}{\oo}{\ii}
        \draw (0.5,-0.3) node {$+$};
        \begin{scope}[shift={(2,0)}]
          \tetra{\io}{\oo}{\ii}
        \draw (0.5,-0.3) node {$-$};
        \end{scope}
      \end{tikzpicture}
    \end{center}

    \caption{Triangulating a square with two triangles}
    \label{fig:triangc2}
  \end{figure}

  \emph{Dimension~$2$:} We set (Figure~\ref{fig:triangc2})
  \begin{align*}
    T_2^{\square^2}(\id_{\square^2}) 
    := [e_2, 0, e_{1,2}] - [e_1, 0, e_{1,2}]
    \in C_2(\square^2;\R)
  \end{align*}
  and extend~$T_2$ to all of~$Q_2(\args;\R)$ by naturality.

  \emph{Dimension~$3$:} Using the decomposition of the $3$-cube into
  five tetrahedra depicted in Figure~\ref{fig:triangc3}, we set
  \begin{align*}
    T_3^{\square^3}(\id_{\square^3}) 
    := & \;[0,e_1,e_2,e_3] \\
     - & \;[e_{1,2}, e_1, e_2, e_{1,2,3}] \\
     + & \;[e_{1,3}, e_1, e_3, e_{1,2,3}] \\
     - & \;[e_{2,3},e_2, e_3, e_{1,2,3}] \\
     + & \;[e_e, e_2, e_3, e_{1,2,3}]
    \qquad \in C_3(\square^3;\R),
  \end{align*}
  and extend~$T_3$ to all of~$Q_3(\args;\R)$ by naturality.

  These maps are not yet compatible with the cubical and the ordinary
  singular boundary operators. Therefore, we symmetrise these maps as follows: 
  For~$j \in \N$ let
  \begin{align*}
    \Sigma_j := \frac1{|\Isom(\square^j)|} \cdot 
                \sum_{\pi \in \Isom(\square^j)} (-1)^{\sgn \pi} \cdot C_j(\pi;\R) 
    \colon C_j(\square^j;\R) & \longrightarrow C_j(\square^j;\R), 
  \end{align*}
  where $\sgn \pi \in \{0,1\}$ encodes whether $\pi$ is orientation-preserving 
  or not. Now a straightforward calculation shows that our triangulations fit 
  together in the sense that 
  \[ \partial_j \circ \Sigma_j \circ T_j = \Sigma_{j-1} \circ T_{j-1} \circ \partial^\square_j
  \]
  holds for all~$j \in \{1,2,3\}$. 

  Moreover, it is not hard to see that $\Sigma_j \circ T_j$ induces for 
  all~$j \in \{0,\dots,3\}$ well-defined natural maps
  \[ \varphi_j \colon C_j^\square(\args;\R) \Longrightarrow C_j^\triangle(\args;\R) 
  \]
  that are compatible with the boundary operators. Indeed, the
  well-defined\-ness modulo degenerate cubes can be seen as
  follows: If $\tau \colon \square^j \longrightarrow X$ is a
  degenerate singular cube, then the reflection~$\pi
  \in\Isom(\square^j)$ on the middle hyperplane orthogonal to one of
  the directions of degeneracy of~$\tau$ ensures that every singular
  simplex in~$\Sigma_j \circ T_j(\tau)$ occurs the same number of
  times with positive and negative sign.

  By construction, we have
  \[ \cdlone{\varphi_3^{\square^3} (\id_{\square^3})}
     \leq \frac1{|\Isom(\square^3)|} \cdot |\Isom(\square^3)| \cdot 5 = 5,
  \]
  as desired.
\end{proof}

\begin{figure}
  \begin{center}
    \def\cube{%
      \begin{scope}[black!50,thin]
        \draw (0,0) rectangle +(1,1);
        \draw (0.25,0.5) rectangle +(1,1);
        \draw (0,0) -- (0.25,0.5);
        \draw (1,0) -- +(0.25,0.5);
        \draw (1,1) -- +(0.25,0.5);
        \draw (0,1) -- +(0.25,0.5);
      \end{scope}
    }
    \def\tetra#1#2#3#4{%
      \begin{scope}[black!20,opacity=0.6]
        \fill #1 -- #2 -- #3 -- cycle;
        \fill #2 -- #3 -- #4 -- cycle;
        \fill #1 -- #3 -- #4 -- cycle;
        \fill #1 -- #2 -- #4 -- cycle;
      \end{scope}
      \cube
      \draw #1 -- #2 -- #3 -- #4 -- #1;
      \draw #2 -- #4;
      \draw #1 -- #3;
      \begin{scope}[shift={(-0.1,0)}]
        \draw #1 node {\tiny$0$};
        \draw #2 node {\tiny$1$};
        \draw #3 node {\tiny$2$};
        \draw #4 node {\tiny$3$};
      \end{scope}
    }
    \def\ioo{(1.25,0.5)}
    \def\oio{(0,0)}
    \def\ooi{(0.25,1.5)}
    \def\iio{(1,0)}
    \def\iii{(1,1)}
    \def\oii{(0,1)}
    \def\ooo{(0.25,0.5)}
    \def\ioi{(1.25,1.5)}
    \makebox[0pt]{%
    \begin{tikzpicture}[thick, x=1.5cm,y=1.5cm]
      \tetra{\ooo}{\ioo}{\oio}{\ooi}
      \draw (0.625,-0.3) node {$+$};
      \begin{scope}[shift={(2,0)}]
        \tetra{\iio}{\ioo}{\oio}{\iii}
        \draw (0.625,-0.3) node {$-$};
      \end{scope}
      \begin{scope}[shift={(4,0)}]
        \tetra{\ioi}{\ioo}{\ooi}{\iii}
        \draw (0.625,-0.3) node {$+$};
      \end{scope}
      \begin{scope}[shift={(6,0)}]
        \tetra{\oii}{\oio}{\ooi}{\iii}
        \draw (0.625,-0.3) node {$-$};
      \end{scope}
      \begin{scope}[shift={(8,0)}]
        \tetra{\ioo}{\oio}{\ooi}{\iii}
        \draw (0.625,-0.3) node {$+$};
      \end{scope}
    \end{tikzpicture}}
  \end{center}

  \caption{Triangulating a $3$-cube with five tetrahedra}
  \label{fig:triangc3}
\end{figure}

\section{Seifert fibred pieces}\label{sec:seifert}

The goal for this section is to prove that Seifert fibred pieces have
trivial cubical simplicial volume. We will first treat a slightly more
general situation and then obtain the Seifert fibred case as a special
case.

\subsection{Open manifolds with vanishing simplicial volumes}

In the presence of sufficiently nice boundary components, vanishing of
the relative simplicial volume implies the vanishing of the locally
finite simplicial volume of the interior. We first formulate the
boundary condition:

\begin{defi}[UBC~boundary]
  An oriented compact $n$-manifold~$M$ has
  \emph{UBC boundary} if every boundary component
  of~$M$ satisfies~$(n-1)$-UBC (in the sense of Definition~\ref{def:ubc}). 
\end{defi}

For example, all manifolds with amenable fundamental group satisfy UBC
in all degrees~\cite{mm}. This includes, in particular, tori.

\begin{prop}\label{prop:vanishing}
  Let $(F,\varphi,\psi)$ be a normed model of the singular
  chain complex and let $M$ be an oriented compact manifold
  with UBC boundary that satisfies $\sv{M,\partial M} = 0$. Then
  \[ \gsv{M,\partial M}^F = 0 = \sv{M,\partial M}
     \quad\text{and}\quad
     \gsvlf{M^\circ}^F = 0 = \svlf{M^\circ}.
  \]
\end{prop}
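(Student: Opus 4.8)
The statement has two assertions: vanishing of the relative $F$-simplicial volume (for compact $M$ with UBC boundary satisfying $\sv{M,\partial M}=0$) and vanishing of the locally finite $F$-simplicial volume of the interior $M^\circ$. The equalities with ordinary simplicial volume on the right-hand side are known: $\sv{M,\partial M}=0$ is a hypothesis, and $\svlf{M^\circ}=0$ follows from Proposition~\ref{prop:svboundary}(2) together with the fact that, for ordinary simplicial volume, vanishing of the relative simplicial volume for a manifold with amenable (or more generally UBC) boundary forces vanishing of the locally finite simplicial volume of the interior --- but actually I should be careful here, since $\svlf{M^\circ}=0$ for the trivial model is itself part of what needs justification; I would derive it as the special case $F=C_*(\args;\R)$ of the argument below, or cite the classical statement (Gromov, Kuessner). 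So the real work is: (i) prove $\gsv{M,\partial M}^F=0$, and (ii) promote this to $\gsvlf{M^\circ}^F=0$.

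\textbf{Step (i): the relative case.} By Proposition~\ref{prop:equivalence}(1) we have $\gsv{M,\partial M}^F \leq \|\psi_n^{\Delta^n}\|\cdot \sv{M,\partial M} = \|\psi_n^{\Delta^n}\|\cdot 0 = 0$. So this half is immediate and requires no UBC hypothesis at all.

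\textbf{Step (ii): the locally finite case.} This is where UBC boundary enters and is the main obstacle. The strategy is the standard one for passing from relative to locally finite fundamental cycles, carried out inside the normed model $F$. Choose a collar $\partial M \times [0,\infty)$ for $M^\circ$ (i.e. realise $M^\circ$ as $M^\circ$ with an exhaustion $K_0 \subset K_1 \subset \cdots$ by compact submanifolds, each diffeomorphic to $M$, whose complements are collars $\partial M \times (j,\infty)$). Given $\epsilon > 0$, pick a relative $F$-fundamental cycle $c \in F_n(M,\partial M)$ with $|c|^F \leq \gsv{M,\partial M}^F + \epsilon = \epsilon$; lift it to $\hat c \in F_n(M)$ with $|\hat c|^F \leq \epsilon$ and $\partial \hat c \in F(i)(F_{n-1}(\partial M))$, say $\partial \hat c = F(i)(z)$ with $z \in F_{n-1}(\partial M)$ a cycle (it is a cycle since $\partial^2 = 0$ and $F(i)$ is injective up to the usual care). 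Here is the point of the boundary hypothesis: $z$ is a boundary in $F_{n-1}(\partial M)$ --- indeed $z$ represents the class $H_{n-1}(\psi^{\partial M})([\partial M])$ up to sign, but more relevantly the long exact sequence shows $z$ bounds in $F_*(\partial M)$, essentially because $[\partial M]$ maps to $0$ in $H_{n-1}(M)$ after capping off, OR one argues directly that $\partial \hat c$ being a cycle supported in the boundary forces it to be a boundary there since $M^\circ$ carries a locally finite fundamental class. Then UBC in degree $n-1$ for each boundary component gives $w \in F_n(\partial M)$ with $\partial w = z$ and $|w|^F \leq \kappa \cdot |z|^F$, where $\kappa$ is the UBC constant (for $F$; by the UBC inheritance proposition this is finite). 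Using the collar, ``stretch'' $F(i)(w)$ to a locally finite chain: glue the pieces $\hat c, \hat c|_{K_1}\text{-copy}, \ldots$ across successive collars using prisms built from $w$ --- concretely, $\hat c - F(i)(w)$ corrected inductively produces a locally finite cycle $c^{\mathrm{lf}} = (c^{\mathrm{lf}}_K)_K \in F_n^{\mathrm{lf}}(M^\circ)$ whose $K_j$-truncation has norm $\leq |c|^F + 2\kappa\cdot|z|^F + (\text{homotopy terms})$, uniformly in $j$, hence a Cauchy family; and $c^{\mathrm{lf}}$ represents the locally finite $F$-fundamental class because its image under $\varphi^{\mathrm{lf}}$ does (it maps to the usual telescope construction of the locally finite ordinary fundamental cycle).

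\textbf{The crux, and how to finesse it.} The genuinely delicate point is controlling the norm of the ``filling'' $w$ and of the chain homotopy/prism that interpolates between consecutive copies of $\hat c$ across a collar: naively each collar contributes a fixed amount, and summing over infinitely many collars diverges. The resolution --- exactly as in the classical locally finite argument --- is that one does \emph{not} insert a full copy of $\hat c$ on each $K_j \setminus K_{j-1}$; instead one inserts the single fundamental cycle once and uses a \emph{telescoping} homotopy that, after the first collar, is identically what makes the family Cauchy in the $F$-norm (the tail contributions are the same correction repeated, and their $F$-norms, being images under the fixed continuous chain homotopy $h^{\partial M}$ of a fixed chain, are bounded; in the limit the Cauchy norm is $\leq |c|^F + C\kappa|z|^F$ for a constant $C$ depending only on $n$, $h$, and $\psi$). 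Since $|c|^F \leq \epsilon$ and $|z|^F \leq \|\partial\|\cdot|\hat c|^F \leq \|\partial\|\cdot\epsilon$, we get $\gsvlf{M^\circ}^F \leq \epsilon \cdot (1 + C\kappa\|\partial\|)$, and letting $\epsilon \to 0$ finishes the proof. An alternative, cleaner route that avoids the collar bookkeeping: first run the $F = C_*(\args;\R)$ case of this argument to conclude $\svlf{M^\circ} = 0$ from $\sv{M,\partial M}=0$ and UBC boundary (this is the classical Gromov/Kuessner statement), then apply Proposition~\ref{prop:equivalence}(2): $\gsvlf{M^\circ}^F \leq \|\psi_n^{\Delta^n}\| \cdot \svlf{M^\circ} = 0$. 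I would present it this way --- it reduces the only hard part to a citation plus a one-line application of the already-proven equivalence proposition, with the collar argument relegated to (or cited from) the ordinary-simplicial-volume literature.
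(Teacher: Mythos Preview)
Your ``alternative, cleaner route'' at the end is exactly what the paper does: reduce via Proposition~\ref{prop:equivalence} to showing $\svlf{M^\circ}=0$, and then carry out the collar/stretching argument for the trivial model (the paper gives this explicitly rather than citing it, but the content is the same). So if you present the proof that way, it is correct and matches the paper.

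However, the direct-in-$F$ collar argument that occupies the bulk of your Step~(ii) contains a genuine error. You write that $z=\partial\hat c$ ``bounds in $F_*(\partial M)$'' and then invoke UBC to find $w\in F_n(\partial M)$ with $\partial w=z$. But $z$ is (up to sign) an $F$-fundamental cycle of~$\partial M$, and $[\partial M]\neq 0$ whenever $\partial M\neq\emptyset$; so no such $w$ exists. The UBC hypothesis does \emph{not} let you fill $z$ itself. What it lets you do --- and this is how the paper's collar argument actually runs --- is fill \emph{differences} of fundamental cycles. One first uses $\sv{\partial M}\le (n+1)\cdot\sv{M,\partial M}=0$ (Proposition~\ref{prop:svboundary}) to produce a sequence $(z_k)_{k\in\N}$ of fundamental cycles of $\partial M$ with $|z_k|_1\to 0$, starting from $z_0=\partial c$; then $z_{k+1}-z_k$ \emph{is} a boundary, and UBC gives fillings $b_k$ with $|b_k|_1\le\kappa\cdot|z_{k+1}-z_k|_1$. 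Stretching the $b_k$ along successive collar slices produces a locally finite fundamental cycle whose norm is bounded by $|c|_1+\sum_k|b_k|_1+n\sum_k|z_{k+1}|_1$, a convergent series. Your single-$w$ version cannot produce a Cauchy family: even if $w$ existed, inserting the same correction on every collar slice would give a constant (not decaying) tail, and your parenthetical ``the tail contributions are the same correction repeated, and their $F$-norms \dots\ are bounded'' is precisely the divergence you were trying to avoid.

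So: drop the direct argument, keep your alternative route, and you are fine.
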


\begin{proof}
  In view of the equivalence of vanishing of generalised simplicial
  volumes (Proposition~\ref{prop:equivalence}), it suffices to prove
  that~$\svlf{M^\circ} = 0$.

  Let $n := \dim M$ and $\kappa \in \R_{>0}$ be an $(n-1)$-UBC
  constant for~$\partial M$. Let $\varepsilon \in \R_{>0}$. Because
  of~$\sv{M,\partial M} = 0$ there is a chain~$c \in C_n(M;\R)$ with
  $\partial c \in C_{n-1}(\partial M;\R)$ that represents~$\fclr{M,\partial M}$
  and satisfies
  \[ \clone c \leq \varepsilon. 
  \]
  In particular, $\clone {\partial c} \leq (n+1) \cdot \clone c \leq
  (n+1) \cdot \varepsilon$. 

  Furthermore, we have~$\sv {\partial M} \leq (n+1) \cdot
  \sv{M,\partial M} = 0$ (Proposition~\ref{prop:svboundary}). Using
  the uniform boundary condition on~$\partial M$, we hence find
  fundamental cycles~$(z_k)_{k \in \N} \subset C_{n-1}(\partial M;\R)$
  and chains~$(b_k)_{k \in \N} \subset C_n(\partial M;\R)$ such that
  \begin{align*}
    z_0 & = \partial c, \\
    \fa{k \in \N} \clone{z_k} & \leq \frac1{2^k} \cdot (n+1) \cdot \varepsilon,\\
    \fa{k \in \N} \partial b_k & = z_{k+1} - z_k, \\
    \fa{k \in \N} \clone{b_k} & \leq \kappa \cdot \frac 2{2^k} \cdot (n+1) \cdot \varepsilon.
  \end{align*}
  Stretching out these chains~$(b_k)_{k \in \N}$ along a collar of~$M$
  and combining the resulting chain with~$c$ produces a locally finite
  fundamental cycle of~$M^\circ$ and implies
  that~\cite[Proposition~6.4, proof of Theorem~6.1]{loehphd}\cite[Section~6.2]{loehl1}
  \begin{align*} 
      \svlf{M^\circ}
    & \leq \clone c + \sum_{k \in \N} \clone{b_k} 
          + n \cdot \sum_{k \in \N} \clone{z_{k+1}}
          \\
    & \leq \varepsilon + 4 \cdot \kappa \cdot (n+1) \cdot \varepsilon
           + 2 \cdot n \cdot (n+1) \cdot \varepsilon.
  \end{align*}
  Taking~$\varepsilon\rightarrow 0$ shows that~$\svlf{M^\circ} = 0$, as desired. 
\end{proof}

Whether Proposition~\ref{prop:vanishing} holds without the additional
UBC condition is an open problem~\cite[p.~86]{loehphd}.

\subsection{Vanishing of simplicial volumes of Seifert fibred pieces}

Let us recall the definition of Seifert fibred $3$-manifolds~\cite{AFW}.

\begin{defi}[standard fibred torus]
The \emph{standard fibred torus} corresponding to a pair~$(a,b)$ of coprime integers with 
$a>0$ is the surface bundle of the automorphism of a disk given by rotation by~$2\pi b/a$,
equipped with the natural fibering by circles. 
\end{defi}

\begin{defi}[Seifert fibred manifold]
A \emph{Seifert fibred} $3$-manifold is a compact $3$-manifold $M$ with a decomposition
of $M$ into disjoint circles, called \emph{fibers}, such that each circle has a tubular neighbourhood in $M$ which is isomorphic to a standard fibred torus.
\end{defi}

Notice that in an oriented Seifert fibred manifold the Seifert fibred structure defines a product structure on each boundary component, which implies that each boundary component is an $S^1$-bundle; thus, each boundary component is a torus.

As a consequence of Proposition~\ref{prop:vanishing} and the vanishing of classical 
simplicial volume we obtain:

\begin{cor}[Seifert fibred pieces]\label{cor:seifert}
  Let $(F,\varphi,\psi)$ be a normed model of the singular chain complex 
  and let $M$ be a Seifert fibred $3$-manifold. Then 
  \[ \gsvlf{M^\circ}^F = 0. 
  \]
  In particular, $\qsvlf{M^\circ} = 0$.
\end{cor}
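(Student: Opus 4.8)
The plan is to deduce the corollary from Proposition~\ref{prop:vanishing}, so that the only genuinely geometric input is the classical vanishing of the ordinary \emph{relative} simplicial volume of Seifert fibred pieces. Two facts have to be checked: that such an $M$ has UBC boundary, and that $\sv{M,\partial M} = 0$. For the first point I would argue as follows. As observed right after the definition of Seifert fibred manifolds, in an oriented Seifert fibred $3$-manifold the fibred structure turns every component of $\partial M$ into an $S^1$-bundle over~$S^1$, hence into a torus; tori have amenable (virtually abelian) fundamental group, so by~\cite{mm} they satisfy $q$-UBC for every~$q$, in particular $(n-1)$-UBC with $n = 3$. Thus $M$ has UBC boundary. (If $M$ is not orientable one first passes to the orientation double cover, which is again Seifert fibred, and uses proper homotopy invariance together with multiplicativity under finite coverings; alternatively one works throughout with the orientation sheaf.)

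For the second point I would invoke the well-known vanishing $\sv{M,\partial M} = 0$ for Seifert fibred $3$-manifolds. In the closed case this is the Gromov--Yano vanishing theorem for manifolds admitting a nontrivial $S^1$-action. In the bounded case one can reduce to a closed statement by doubling: $DM := M \cup_{\partial M} M$ is again a Seifert fibred (hence graph) manifold, so $\sv{DM} = 0$ by geometrization (Theorem~\ref{thm:sv3}, there being no hyperbolic pieces); and since the doubling gluing along the boundary tori is compatible, the additivity statement of Theorem~\ref{thm:additivity} gives $2 \cdot \sv{M,\partial M} = \sv{DM} = 0$, whence $\sv{M,\partial M} = 0$.

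Granting these two facts, Proposition~\ref{prop:vanishing} applies to the given normed model $(F,\varphi,\psi)$ and yields $\gsvlf{M^\circ}^F = 0 = \svlf{M^\circ}$, which is the assertion. For the final sentence I would specialise to a cubical normed model of~$C_*(\args;\R)$, which exists by Proposition~\ref{prop:cubex}; by Remark~\ref{rem:normalisation} the associated locally finite generalised simplicial volume coincides with cubical simplicial volume, and therefore $\qsvlf{M^\circ} = 0$. The only step carrying any real content is the relative vanishing $\sv{M,\partial M} = 0$; everything else is bookkeeping, and the doubling argument above (or, equivalently, a direct relative version of Yano's theorem) is the cleanest route I see to it.
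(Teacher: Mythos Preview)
Your proof is correct and follows essentially the same approach as the paper: check that $M$ has UBC boundary (the boundary tori have amenable fundamental group), invoke the classical vanishing $\sv{M,\partial M}=0$, and apply Proposition~\ref{prop:vanishing}. The paper simply cites Thurston~\cite[Corollary~6.5.3]{thurston} for the relative vanishing rather than giving your doubling argument; note incidentally that since the Seifert fibrations on the two copies of~$M$ match along the boundary tori, $DM$ is itself Seifert fibred and you can use your closed-case argument directly, so the appeal to Theorem~\ref{thm:sv3} is unnecessary.
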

\begin{proof}
  It is well known that $\sv{M,\partial M} = 0$ holds for Seifert
  fibred $3$-mani\-folds~\cite[Corollary~6.5.3]{thurston}. 
  Because the boundary components of~$M$ are tori, $M$ has
  UBC boundary, and thus Proposition~\ref{prop:vanishing} shows
  that~$\gsvlf{M^\circ}^F = 0$.
\end{proof}

 \section{Hyperbolic pieces}\label{sec:hyp}

 As next step, we discuss the hyperbolic case: we have already recalled
 in Theorem~\ref{thm:svhyp} that for a complete oriented hyperbolic
 $n$-manifold of finite volume without boundary the ordinary simplicial
 volume coincides with the ratio between the Riemannian volume and
 $v_n^\triangle$.  The constant~$v_n^\triangle$, depending only on the
 dimension~$n$ of the manifold, is the supremum of volumes of geodesic
 $n$-simplices in the hyperbolic space~$\hyp^n$, which is equal to the
 volume of the ideal and regular geodesic $n$-simplex~\cite{HM,pey}.

In the cubical case, the situation is more involved because the
interaction between combinatorics and the geometry of hyperplanes
in~$\hyp^n$ is more delicate than in the simplicial case. We review
hyperbolic cubes in Section~\ref{subsec:hypcubes}. In
Section~\ref{subsec:hypupper}, we prove an upper bound for cubical
simplicial volume of hyperbolic manifolds (using a cubical smearing),
and in Section~\ref{subsec:hyplower}, we prove a lower bound (using a
cubical straightening). In dimension~$3$, we have matching bounds,
which gives the exact value (Corollary~\ref{cor:hyp3}). In
Section~\ref{subsec:hyppp}, we briefly discuss proportionality for
cubical simplicial volume of hyperbolic manifolds.

For these arguments, we will use the concrete description of locally
finite cubical chains through infinite sums of singular cubes and the
corresponding description of locally finite cubical simplicial volume
through the $\ell^1$-norm of such chains, in analogy with the
simplicial case (Example~\ref{exa:lfalternative}).

\subsection{Hyperbolic cubes}\label{subsec:hypcubes}

We first discuss different types of cubes in~$\hyp^n$. The $2^k$~vertices 
of the standard $k$-cube~$\square^k$ will be denoted by~$(v_j)_{j \in J^k}$, 
where $J^k := \{0,1\}^k$. Taking iterated geodesics produces straight cubes:

\begin{defi}[straight cube]
  Let $X$ be a uniquely geodesic metric space (e.g.,~$\hyp^n$), let $k
  \in \N$ and let $x = (x_j)_{j \in J^k}$ be a family of points
  in~$X$. Then the \emph{straight cube~$[x]^\square$ with vertices~$x$
    in~$X$} is defined inductively via
  \begin{align*}
    [x]^\square \colon \square^k & \longrightarrow X \\
    (t_1,\dots, t_k) & \longmapsto 
    (1-t_k) \cdot \bigl[(x_{j0})_{j \in J^{k-1}}\bigr]^\square(t_1,\dots,t_{k-1}) 
    \\
    & \phantom{\longmapsto}\!\!
    + t_k \cdot \bigl[(x_{j1})_{j \in J^{k-1}}\bigr]^\square(t_1,\dots,t_{k-1});
  \end{align*}
  if $k=0$ we just define the straight cube as the corresponding single point.
\end{defi}

In other words, straight cubes are defined as iterated geodesic joins
of opposite faces. In particular, straight cubes in~$\hyp^n$ are
smooth~\cite[Section~2.1.1]{loehsauer}. 

\subsubsection{Volumes of cubes}

For a smooth cube~$c \colon
\square^n \longrightarrow \hyp^n$ we define the \emph{signed volume} by
\[ \vol^{\text{alg}}_{\hyp^n} c := \int_{\square^n} c^* d \Omega, 
\]
where $\Omega$ is the volume form on $\hyp^n$, and the \emph{volume} by $\vol_{\hyp^n}c:=|\vol^{\text{alg}}_{\hyp^n}c|$.

A straight cube is \emph{(non)-degenerate} if its volume is (non)-zero. If all vertices of a non-degenerate cube are different,  the cube is \emph{strongly non-de\-generate}.
If the signed volume is positive (respectively negative) the cube is \emph{positively} (respectively~\emph{negatively}) \emph{oriented}.

\begin{defi}[volume function]
Identifying a straight cube in $\hyp^n$ with its vertices we define the following \emph{signed volume function}
$$
\begin{array}{rrcl}
\vol^{\text{alg}}:&({\hyp}^n)^{2^n}&\longrightarrow &\R\\
& x=(x_j)_{j\in J^n}&\longmapsto & \vol^{\text{alg}}_{\hyp^n}([x]^\square).
\end{array}
$$
\end{defi}
Similarly, we define the volume function $\vol:=|\vol^{\text{alg}}|$. 

\begin{rem}\label{remark:cont}
Notice that the signed volume function is continuous with respect to the topology on 
$({\hyp}^n)^{2^n}$ induced by the distance~$\times d_{\hyp^n}$. Indeed, hyperbolic geodesics continuously depend on their endpoints.
\end{rem}

By definition of straight cubes, we can express the diameter of $[x]^\square$  in terms
of the distances between the elements of $x$: 
\begin{equation}\label{diam}
\diam_{\hyp^n} [x]^\square=\max_{i, j\in J^k} d_{\hyp^n}(x_i, x_j).
\end{equation}

One major difficulty with cubes is that not all straight cubes are
geodesic in the following sense:

\begin{defi}[geodesic cube]
  A straight cube in~$\hyp^n$ is \emph{geodesic} if each $(n-1)$-face lies in
  a hyperbolic hyperplane of~$\hyp^n$. 
\end{defi}

\begin{prop}[volume bounds for cubes]\label{volume}
  Let $n\in \N$. Then the numbers
  \begin{align*}
    v_n^\square & := 
    \sup \bigl\{ \vol_{\hyp^n} c \bigm| \text{$c$ is a geodesic $n$-cube in~$\hyp^n$}\bigr\} 
    \\
    w_n^\square & := 
    \sup \bigl\{ \vol_{\hyp^n} c \bigm| \text{$c$ is a straight $n$-cube in~$\hyp^n$}\bigr\}     
  \end{align*}
  are finite. 
\end{prop}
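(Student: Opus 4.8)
The plan is to bound the volume of a straight (respectively geodesic) cube by the sum of the volumes of the simplices in a fixed triangulation of the $n$-cube, and then invoke the classical fact that hyperbolic simplices have uniformly bounded volume. Concretely, fix once and for all a triangulation of~$\square^n$ into finitely many affine $n$-simplices, say $\square^n = \bigcup_{i=1}^{N} \Delta_i$, where each $\Delta_i$ is an affine simplex with vertices among the $2^n$ vertices of~$\square^n$ (for instance the standard ``staircase'' triangulation, or the one into $n!$ simplices coming from orderings of the coordinates). The number $N$ depends only on~$n$.

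First I would observe that for a straight cube $c = [x]^\square \colon \square^n \to \hyp^n$, the restriction of $c$ to each $\Delta_i$ need not be a geodesic simplex, but we can straighten it: let $\sigma_i \colon \Delta^n \to \hyp^n$ be the geodesic (straightened) simplex with the same vertices as $c|_{\Delta_i}$. The key analytic input is that $\vol_{\hyp^n} c \le \sum_{i=1}^N \vol_{\hyp^n} \sigma_i$. This inequality should follow by viewing the signed volume as the integral of the pullback of the volume form $\Omega$: both $c$ and the union of the $\sigma_i$ give singular chains representing the ``same'' relative cycle in $\hyp^n$ rel the image of the boundary (they have matching vertices and the straightening is canonical), and since $\Omega$ is closed, integration of $\Omega$ only depends on the homology class rel boundary; so $\int_{\square^n} c^*\Omega = \sum_i \int_{\Delta_i} (c|_{\Delta_i})^*\Omega = \sum_i \int_{\Delta^n}\sigma_i^*\Omega$, and then the triangle inequality for absolute values gives $\vol_{\hyp^n} c \le \sum_i \vol_{\hyp^n}\sigma_i$. (One has to be slightly careful that the straightening of $c|_{\Delta_i}$ is homotopic rel vertices to $c|_{\Delta_i}$ itself; this is standard since $\hyp^n$ is contractible and geodesics depend continuously on endpoints, cf.\ Remark~\ref{remark:cont}.)

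Next I would invoke the classical bound: every geodesic $n$-simplex in $\hyp^n$ has volume at most $v_n^\triangle < \infty$, the volume of the ideal regular simplex (this is exactly the constant appearing in Theorem~\ref{thm:svhyp}, whose finiteness is recalled there). Combining, for any straight $n$-cube $c$ we get $\vol_{\hyp^n} c \le N \cdot v_n^\triangle$, hence $w_n^\square \le N \cdot v_n^\triangle < \infty$; and since every geodesic cube is in particular straight, $v_n^\square \le w_n^\square < \infty$ as well. This proves both numbers are finite.

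The main obstacle is making the homological/Stokes-type argument in the first step fully rigorous: one must explain precisely why $\int_{\square^n} c^*\Omega$ equals the sum of the integrals over the straightened simplices — i.e.\ why replacing $c$ by a piecewise-straightened version does not change the integral of the closed form $\Omega$. The clean way is to triangulate $\square^n$ as above, note that on each $\Delta_i$ the maps $c|_{\Delta_i}$ and $\sigma_i$ agree on vertices and are joined by the canonical straight-line homotopy $h_i$ in $\hyp^n$; then $\int_{\Delta_i}(c|_{\Delta_i})^*\Omega - \int_{\Delta^n}\sigma_i^*\Omega = \int_{\partial(\Delta_i\times I)}(\text{pieces})^*\Omega$ by Stokes, and the boundary contributions from the ``walls'' $h_i|_{\partial\Delta_i \times I}$ cancel in the total sum over $i$ because adjacent simplices share faces and the straightening is defined the same way on the common face. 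Alternatively — and this is probably cleaner to write — one can avoid Stokes entirely and argue directly: orient everything so that the signed volumes add, use that $[x]^\square$ and its simplicial straightening represent the same class in $H_n(\hyp^n,\,\partial$-image$;\R)$ because $\hyp^n$ is acyclic, and that $\omega \mapsto \int_{(\cdot)}\omega$ descends to homology; then $\vol^{\text{alg}}_{\hyp^n} c = \sum_i \vol^{\text{alg}}_{\hyp^n}\sigma_i$, and the triangle inequality finishes it. Either route reduces the whole proposition to the already-cited finiteness of $v_n^\triangle$.
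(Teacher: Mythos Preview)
Your strategy---reduce to the finiteness of~$v_n^\triangle$ by cutting the cube into simplices---matches the paper's, but the identity you rely on is false, and your argument for it has a concrete gap. The Stokes cancellation you invoke handles only the \emph{interior} faces of the triangulation of~$\square^n$; on a simplicial face $F\subset\partial\square^n$, the restriction $c|_F$ is a piece of a straight $(n{-}1)$-cube while $\sigma_i|_F$ is a geodesic $(n{-}1)$-simplex, and for $n\ge 3$ these differ, so the prism integrals over $F\times I$ do not vanish. In your homological phrasing the same problem reappears: $\partial\bigl(\sum_i\sigma_i\bigr)$ does not land in $c(\partial\square^n)$, so $c$ and $\sum_i\sigma_i$ are not relative cycles for the same pair. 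In fact the equality $\int_{\square^n}c^*\Omega=\sum_i\int\sigma_i^*\Omega$ already fails in Euclidean~$\R^3$: move the vertex $(1,1,1)$ of the unit cube to $(1,1,2)$; then the straight-cube (trilinear) signed volume is $\int_{[0,1]^3}(1+st)\,ds\,dt\,du=\tfrac54$, whereas the five straightened tetrahedra of Figure~\ref{fig:triangc3} sum to~$\tfrac43$.

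The paper bypasses all of this by working with the \emph{image} rather than the domain: since a straight cube is an iterated geodesic join, $\im(c)$ lies in the convex hull of its $2^n$ vertices, and that convex hull can be triangulated into at most $T(n)$ geodesic $n$-simplices, giving $v_n^\square\le w_n^\square\le T(n)\cdot v_n^\triangle<\infty$ with no chain-level bookkeeping. If you want to salvage your domain-triangulation viewpoint, the missing ingredient is precisely a bound of $\vol_{\hyp^n}c$ by the volume of the convex hull of the vertices---which is the paper's step.
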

\begin{proof}
  Let $T(n)$ be the number of $n$-simplices needed to triangulate 
  convex hulls of $2^n$~points in~$\hyp^n$. For a straight cube~$c:\square^n\rightarrow \hyp^n$, we denote by $\text{Conv}(c(v_j)_{j\in J^n})$
the convex hull of the vertices $(c(v_j))_{j\in J^n}$.
By definition of straight cubes, we have $\im(c)\subseteq \text{Conv}(c(v_j)_{j\in J^n})$, and therefore it follows that
$v_n^\square\leq w_n^\square\leq T(n)\cdot v_n^\triangle<\infty$.
\end{proof}

By definition, clearly~$v_n^\square \leq w_n^\square$. However, the
exact relation in high dimensions seems to be unknown.

\subsubsection{Ideal geodesic cubes}
Let $\overline{\hyp^n}=\hyp^n\cup\partial \hyp^n$ be the standard compactification of the hyperbolic space.
We extend the notion of straight cubes to ideal straight cubes.
If $x= (x_j)_{j \in J^k}$ is a family of points in $\partial{\hyp^n}$, then the straight cube $[x]^\square$ is \emph{ideal}. 

Computing the supremum of the volume of geodesic cubes we may include also geodesic cubes with vertices on the boundary by setting
$$
\overline{v}_n^\square:=\sup \bigl\{ \vol_{\hyp^n} c \bigm| \text{$c$ is a geodesic $n$-cube in~$\overline{\hyp^n}$}\bigr\}. $$ 

\begin{lem}\label{lem:ideal}
For every $n \in \N_{>0}$, we have that $v_n^\square=\overline{v}_n^\square$.
\end{lem}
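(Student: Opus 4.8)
\textbf{Proof plan for Lemma~\ref{lem:ideal}.}

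The plan is to prove the two inequalities $v_n^\square \le \overline{v}_n^\square$ and $\overline{v}_n^\square \le v_n^\square$ separately. The first is immediate: every geodesic $n$-cube in $\hyp^n$ is in particular a geodesic $n$-cube in $\overline{\hyp^n}$, so the supremum over the larger class is at least as large. The content is in the reverse inequality, which says that pushing vertices out to the boundary cannot gain volume beyond what is already achieved by interior geodesic cubes. The natural strategy is an approximation/limiting argument: given a geodesic cube $c$ in $\overline{\hyp^n}$ with some vertices (possibly all) on $\partial\hyp^n$, we want to perturb those ideal vertices slightly inward to points of $\hyp^n$, obtaining a genuine geodesic $n$-cube $c'$ in $\hyp^n$ whose volume is close to $\vol_{\hyp^n} c$; then $\vol_{\hyp^n} c \le v_n^\square + \epsilon$ for every $\epsilon>0$, whence $\overline{v}_n^\square \le v_n^\square$.

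First I would set up the perturbation. Write $x = (x_j)_{j\in J^n}$ for the vertices of $c$, with $I \subseteq J^n$ the set of indices $j$ with $x_j \in \partial\hyp^n$. Pick, for each $j \in I$, a point $x_j^t \in \hyp^n$ on the geodesic ray toward $x_j$ converging to $x_j$ as $t \to \infty$ (for $j \notin I$ set $x_j^t := x_j$); this gives a family $x^t = (x_j^t)_{j\in J^n}$ of finite points with $x^t \to x$ in $\overline{\hyp^n}^{2^n}$. The key subtlety is that $c' := [x^t]^\square$ need not be \emph{geodesic} merely because $c$ is: being geodesic is a codimension condition on the faces (each $(n-1)$-face must lie in a hyperbolic hyperplane), and an arbitrary perturbation of the vertices destroys it. So I would not move the ideal vertices independently and arbitrarily; instead I would exploit the structure that a geodesic cube already has. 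Concretely, the faces of a geodesic cube lie in hyperplanes, and the combinatorics of which vertices lie in which face-hyperplane is rigid; I would choose the inward perturbations $x_j^t$ so as to preserve enough of this incidence structure, e.g.\ by first perturbing the hyperplanes spanned by the faces (moving each supporting hyperplane slightly, in a compatible nested way indexed by the face lattice of $\square^n$) and then defining the new vertices as the appropriate intersections, using that the face lattice of the cube is the Boolean lattice $2^{[n]}$ and the relevant hyperplanes intersect transversally for the geodesic cube $c$ (a generic/limiting position argument). Transversality of the intersections is an open condition, so it survives small perturbation, and it is what makes the intersection points depend continuously on the hyperplanes.

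Once $c'$ is constructed as a genuine geodesic cube in $\hyp^n$ with vertices $x^t \to x$, the convergence of volumes follows from continuity: $\vol^{\text{alg}}_{\hyp^n}([x^t]^\square) \to \vol^{\text{alg}}_{\hyp^n}([x]^\square)$ by Remark~\ref{remark:cont} (the signed volume function is continuous on $(\hyp^n)^{2^n}$, and one needs the mild extension that it extends continuously to boundary points for cubes of finite volume — here one uses Proposition~\ref{volume}, i.e.\ $\overline{v}_n^\square \le w_n^\square < \infty$, together with the fact that $\im([x]^\square)$ is contained in the convex hull of its vertices, so the defining integral $\int_{\square^n} ([x^t]^\square)^* d\Omega$ converges by dominated convergence as the integrands converge pointwise off a null set and are uniformly controlled on compacta). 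Taking $t \to \infty$ then gives $\vol_{\hyp^n} c \le v_n^\square$, and as $c$ was an arbitrary geodesic cube in $\overline{\hyp^n}$, $\overline{v}_n^\square \le v_n^\square$. Combined with the trivial inequality this yields $v_n^\square = \overline{v}_n^\square$.

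I expect the main obstacle to be exactly the point flagged above: arranging the inward perturbation of the ideal vertices so that the perturbed cube remains \emph{geodesic}, rather than merely straight. A purely ``move the vertices a little'' argument is not enough, because geodesic-ness is not an open condition on vertex tuples. The cleanest route is probably to work with the supporting hyperplanes of the $2n$ facets (and more generally the subspaces spanned by lower faces), show these are transverse for a geodesic cube realizing a near-optimal volume, perturb the hyperplanes into $\hyp^n$ compatibly with the cube's face lattice, and recover the perturbed geodesic cube as intersections; then invoke continuity of volume. A secondary technical point is justifying continuity of the signed volume up to the boundary, but this is routine given the finiteness bound $w_n^\square < \infty$ from Proposition~\ref{volume} and the convex-hull containment from the definition of straight cubes.
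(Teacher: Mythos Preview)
Your overall strategy coincides with the paper's: the content is the inequality $\overline v_n^\square\le v_n^\square$, and one proves it by approximating a boundary geodesic cube by interior geodesic cubes and using continuity of volume. You are also right to flag the crucial subtlety that a naive vertex perturbation need not preserve the geodesic property; this is exactly where the argument has to be careful.

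Where the paper differs from your plan is in how it resolves that subtlety, and its solution is considerably simpler than your proposed hyperplane--perturbation scheme. First, the paper makes a preliminary reduction (via the coning construction of Remark~\ref{max}) to the case where \emph{all} vertices of $c$ lie on $\partial\hyp^n$; this avoids the mixed ideal/finite situation you treat. Second, instead of perturbing the supporting hyperplanes and recovering vertices as intersections, the paper simply pulls all $2^n$ ideal vertices radially toward a fixed interior point~$O$ by a common hyperbolic distance~$L$, setting $c_L:=[(\alpha_j(L))_{j\in J^n}]^\square$. The point you may be missing is that this radial contraction \emph{does} preserve geodesic--ness: in the Klein model, hyperbolic hyperplanes are exactly the affine Euclidean hyperplanes meeting the ball, geodesics through~$O$ are Euclidean radii, and the map $x\mapsto r_L\cdot x$ sends each affine hyperplane to a parallel affine hyperplane. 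Hence if the $2^{n-1}$ vertices of a face of~$c$ lie on a hyperbolic hyperplane, so do the corresponding vertices of~$c_L$. The volume convergence $\vol^{\rm alg}_{\hyp^n}c_L\to\vol^{\rm alg}_{\hyp^n}c$ then follows as you indicate. So your concern is legitimate, but the remedy is a one--line observation about the Klein model rather than a face--lattice transversality argument; your proposed route would work in principle but carries unnecessary overhead (and the transversality/compatibility claims you sketch would still need careful justification at ideal vertices).
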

\begin{proof}
Using the same construction of Remark \ref{max} we deduce that, given a geodesic cube having some vertices on $\bb \hyp^n$, there exists an ideal one with the same vertices on $\bb \hyp^n$ containing it. Therefore,
it suffices to show that for every positively oriented ideal geodesic $n$-cube $c$ and all $\varepsilon >0$
there exists a geodesic $n$-cube $c_\varepsilon:\square^n\rightarrow \hyp^n$ with $\vol^{\rm alg}_{\hyp^n} c_\varepsilon \geq\vol^{\rm alg}_{\hyp^n} c - \varepsilon$.

First of all notice that if we pick an ideal cube that is not strongly
non-degenerate there exists a strongly non-degenerate one of bigger
volume.  Therefore, we can restrict ourselves to considering strongly
non-degenerate ideal cubes.
 
Let $B^n\subset \R^n$  be the Poincar\'e disk model with origin $O$ and boundary~$S^{n-1}$.
Let $c$ be a positively oriented strongly non-degenerate ideal geodesic $n$-cube and let $(c(v_j))_{j\in J^n}$ be its vertices
in $S^{n-1}$. Let $\alpha_j:[0,\infty]\rightarrow \mathbb{H}^n$ be the geodesic ray (with constant speed parametrization) 
that starts at the origin~$O$ and is asymptotic to $c(v_j)$. For~$L \in \R_{>0}$ we then define the straight cube
\[ c_L := \bigl[ (\alpha_j(L))_{j \in J^n} \bigr]^\square 
\]
in~$\hyp^n$. It is easy to verify that~$\lim_{L \rightarrow \infty}
\vol^{\rm alg}_{\hyp^n} c_L = \vol^{\rm alg}_{\hyp^n} c$. Therefore, for
every~$\varepsilon > 0$ there exists~$L \in \R_{>0}$ with~$\vol^{\rm
  alg}_{\hyp^n} c_{L}\geq \vol^{\rm alg}_{\hyp^n}c-\varepsilon$; in
particular, $c_L$ is positively oriented as soon as~$\epsilon <
\vol^{\rm alg}_{\hyp^n} c$.
\end{proof}

\begin{question}
Is there a geodesic $n$-cube realizing the maximum of the volume?
\end{question}

\begin {rem}\label{max}
One may notice that the maximum of the volume of geodesic $n$-cubes (possibly with some vertices on $\bb \hyp^n$), if any, is attained by ideal $n$-cubes. 
Indeed, given a geodesic $n$-cube $c$, let $v$ be a point in the internal part of $\im(c)$. For each vertex~$c(v_j)$ of $c$ consider the geodesic ray starting from $v$ and passing through~$c(v_j)$, and pick its endpoint $c_\infty^j$.
The ideal $n$-cube $[(c_\infty^j)_j]^\square$ contains~$c$. Then, for every geodesic $n$-cube  there exists an ideal one with bigger volume.
\end{rem}

\begin{question}
If there exists an ideal $n$-cube realizing the maximum, is it a regular $n$-cube?
\end{question}

\begin{exa}\label{exa:coxeter}
In dimension $3$, Coxeter~\cite{coxeter} observed that the ideal regular $3$-cube can
be triangulated by five ideal regular tetrahedra. Then the ideal
regular $3$-cube realizes the supremum of the volumes of the
geodesic $3$-cubes and $v_3^\square=5\cdot v_3^\triangle$.
\end{exa}

\begin{exa}
In dimension $4$, we have that $15\cdot v_4^\triangle\leq \overline{v}_4^\square \leq 16 \cdot v_4^\triangle$ \cite{smi,ma}.
\end{exa}

\subsubsection{Approximation of cubes}
We will need that we can approximate large geodesic cubes
well enough:

\begin{prop}[approximation of cubes]\label{prop:approxcubes}
  Let $n \in \N$ and let $\varepsilon \in (0,v_n^\square/2)$. Then
  there exists a positively oriented geodesic cube~$c_\varepsilon$
  in~$\hyp^n$ and $\delta >0$ with the following properties:
  \begin{enumerate}
    \item\label{it:prop1} 
      The cube~$c_\varepsilon$ has large volume, i.e., $\vol_{\hyp^n} c_\varepsilon
      \geq v_n^\square - \varepsilon$.
    \item \label{it:prop2} 
      Every straight cube close to~$c_\varepsilon$ has large volume and 
      is positively oriented, i.e., for all straight cubes~$c
      \colon \square^n \longrightarrow \hyp^n$ with 
      \[ \max_{j \in J^n} d_{\hyp^n} \bigl( c(v_j), c_\varepsilon(v_j) \bigr) \leq \delta
      \]
      we have $\vol_{\hyp^n} c \geq
      \vol_{\hyp^n} c_\varepsilon - \varepsilon$ and $c$ is positively oriented.
  \end{enumerate}
\end{prop}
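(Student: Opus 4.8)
The plan is to combine the definition of $v_n^\square$ as a supremum with the continuity of the signed volume function (Remark~\ref{remark:cont}) and the compactness of closed balls in $\hyp^n$. First I would fix $\varepsilon \in (0, v_n^\square/2)$ and use the definition of $v_n^\square$ to pick a geodesic $n$-cube $c_\varepsilon$ in $\hyp^n$ with $\vol_{\hyp^n} c_\varepsilon \geq v_n^\square - \varepsilon/2 > 0$; up to applying an element of $\Isom(\square^n)$ we may assume it is positively oriented, so $\vol^{\mathrm{alg}}_{\hyp^n} c_\varepsilon = \vol_{\hyp^n} c_\varepsilon \geq v_n^\square - \varepsilon/2$. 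This immediately gives property~\enumref{it:prop1}. For property~\enumref{it:prop2}, I would invoke continuity of $\vol^{\mathrm{alg}} \colon (\hyp^n)^{2^n} \to \R$ (Remark~\ref{remark:cont}) at the point $x^\varepsilon := (c_\varepsilon(v_j))_{j \in J^n}$: there exists $\delta > 0$ such that whenever $y = (y_j)_{j \in J^n}$ satisfies $\max_{j \in J^n} d_{\hyp^n}(y_j, c_\varepsilon(v_j)) \leq \delta$, we have $|\vol^{\mathrm{alg}}([y]^\square) - \vol^{\mathrm{alg}}_{\hyp^n} c_\varepsilon| \leq \varepsilon/2$.

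Now for any straight cube $c \colon \square^n \to \hyp^n$ with $\max_j d_{\hyp^n}(c(v_j), c_\varepsilon(v_j)) \leq \delta$, writing $c = [(c(v_j))_j]^\square$, the above estimate yields
\[
  \vol^{\mathrm{alg}}_{\hyp^n} c \geq \vol^{\mathrm{alg}}_{\hyp^n} c_\varepsilon - \frac\varepsilon 2 \geq v_n^\square - \varepsilon > \frac{v_n^\square}{2} > 0,
\]
so in particular $\vol^{\mathrm{alg}}_{\hyp^n} c > 0$, which means $c$ is positively oriented and $\vol_{\hyp^n} c = \vol^{\mathrm{alg}}_{\hyp^n} c \geq \vol_{\hyp^n} c_\varepsilon - \varepsilon$, as desired. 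Shrinking $\delta$ if necessary costs nothing, and one only has to be slightly careful that the bound $\varepsilon < v_n^\square / 2$ is what guarantees strict positivity of the signed volume of the nearby cubes, hence the claim about positive orientation.

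The only genuine subtlety is whether one needs $c_\varepsilon$ itself to be \emph{geodesic} rather than merely straight: the statement asks for a geodesic cube $c_\varepsilon$, and this is exactly what the definition of $v_n^\square$ (as a supremum over geodesic cubes) provides, so there is no extra work. I do not expect any serious obstacle here: the finiteness of $v_n^\square$ from Proposition~\ref{volume} ensures the supremum is a well-defined finite positive real (positivity because one can always find a non-degenerate geodesic cube, e.g.\ a small one), and continuity of $\vol^{\mathrm{alg}}$ does all the rest. The main point to get right is bookkeeping of the various $\varepsilon/2$'s and making sure the perturbed cubes remain strongly positively oriented; this is routine given Remark~\ref{remark:cont}.
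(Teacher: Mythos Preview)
Your proposal is correct and follows essentially the same approach as the paper: pick a near-maximal geodesic cube from the definition of~$v_n^\square$, assume it is positively oriented, and use continuity of the signed volume function (Remark~\ref{remark:cont}) to control nearby straight cubes. The only cosmetic differences are that the paper uses~$\varepsilon$ rather than~$\varepsilon/2$ in the continuity step (obtaining $\vol^{\mathrm{alg}}_{\hyp^n} c > v_n^\square - 2\varepsilon > 0$ instead), and that it does not mention compactness of closed balls, which indeed plays no role in the argument.
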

\begin{proof}
By definition of $v_n^\square$, there is a geodesic $n$-cube $c_\varepsilon$ in~$\hyp^n$ such that $\vol_{\hyp^n}c_\varepsilon \geq v_n^\square-\varepsilon$. Without loss of generality we may suppose that $c_\varepsilon$ is positively oriented.

Let $\square_n(\hyp^n)$ be the set of straight $n$-cubes in $\hyp^n$. The signed volume function is continuous on~$\square_n(\hyp^n)$ (Remark~\ref{remark:cont}), and therefore there exists a~$\delta >0$ such that 
$$
\fa{c \in \square_n(\hyp^n)} \times d_{\hyp^n}(c, c_\varepsilon)< \delta 
\Longrightarrow |\vol^{\rm alg}_{\hyp^n}c_\varepsilon -\vol^{\rm alg}_{\hyp^n}c|<\varepsilon.
$$
Notice that $\times d_{\hyp^n}(c,c_\varepsilon)< \delta$ is equivalent to  $\max_{j \in J^n} d_{\hyp^n}(c(v_j),c_\varepsilon(v_j))<\delta$; finally, $
|\vol^{\text{alg}}_{\hyp^n} c-\vol^{\text{alg}}_{\hyp^n} c_\varepsilon|< \varepsilon$ implies that $$\vol^{\text{alg}}_{\hyp^n} c\,>\,\vol^{\text{alg}}_{\hyp^n} c_\varepsilon-\varepsilon\,\geq\,v_n^\square-2\cdot\varepsilon\,>\,0$$
and hence any such~$c$ is positively oriented.
\end{proof}

\subsection{Hyperbolic pieces -- upper bound}\label{subsec:hypupper}

We will now establish an upper bound for cubical simplicial volume of hyperbolic manifolds in terms of geodesic cubes: 

\begin{thm}\label{thm:hypupper}
  Let $M$ be a complete oriented hyperbolic $n$-manifold of finite volume. 
  Then
  \[ \qsvlf M \leq \frac{\vol (M)}{v_n^\square}. 
  \]
\end{thm}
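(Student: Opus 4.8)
The plan is to construct an explicit locally finite cubical fundamental cycle of $M$ by a \emph{smearing} procedure, in direct analogy with Thurston's proof that $\svlf M = \vol(M)/v_n^\triangle$, but with geodesic cubes in place of straight simplices. First I would fix $\varepsilon \in (0, v_n^\square/2)$ and invoke Proposition~\ref{prop:approxcubes} to obtain a positively oriented geodesic $n$-cube $c_\varepsilon$ in $\hyp^n$ with $\vol_{\hyp^n} c_\varepsilon \geq v_n^\square - \varepsilon$, together with a $\delta > 0$ so that every straight cube whose vertices are $\delta$-close to those of $c_\varepsilon$ is still positively oriented with volume at least $\vol_{\hyp^n} c_\varepsilon - \varepsilon$. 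Writing $M = \Gamma \backslash \hyp^n$ with $\Gamma = \pi_1(M) \leq \Isom^+(\hyp^n) =: G$, one pushes the single cube $c_\varepsilon$ around by the $G$-action and averages over $G/\Gamma$ using the (finite, because $\vol(M) < \infty$) invariant measure on $\Gamma \backslash G$. Concretely, for $g \in G$ the composition of $g \cdot c_\varepsilon$ with the covering projection $\hyp^n \to M$ gives a singular cube $\smear(g) \colon \square^n \to M$ depending only on $\Gamma g$, and the smeared chain is
\[
  z_\varepsilon := \frac{1}{\mu(\Gamma\backslash G)} \int_{\Gamma \backslash G} \smear(g) \, d\mu(\Gamma g),
\]
interpreted as a locally finite cubical chain (the integral is a locally finite sum after choosing a fundamental domain and decomposing it into finitely many Borel pieces on which the relevant cubes vary continuously, exactly as in the simplicial smearing argument).

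The key points to check are then threefold. First, $z_\varepsilon$ is a cycle: its cubical boundary $\partial z_\varepsilon$ is a smeared chain built from the $2n$ faces of $c_\varepsilon$, and the reflection symmetry of the construction together with $G$-invariance of $\mu$ makes the contributions cancel in pairs (each face appears with both signs under the two isometries sending $c_\varepsilon$ to a cube sharing that face). Second, $z_\varepsilon$ represents a nonzero multiple of the cubical fundamental class: evaluating against the volume cocycle (or pairing with $\qfclr M$ via the comparison isomorphism $H^\square_* \cong H_*$) gives $\langle z_\varepsilon, [\mathrm{vol}]\rangle = \frac{1}{\mu(\Gamma\backslash G)} \int_{\Gamma\backslash G} \vol^{\text{alg}}_{\hyp^n}(g \cdot c_\varepsilon)\, d\mu = \vol^{\text{alg}}_{\hyp^n} c_\varepsilon$, using $G$-invariance of the volume form; since $c_\varepsilon$ is positively oriented this is positive, so after rescaling by $\vol(M)/\vol^{\text{alg}}_{\hyp^n} c_\varepsilon$ (possible since $\int_M \Omega = \vol(M)$) we obtain a genuine locally finite cubical fundamental cycle $\widetilde z_\varepsilon$. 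Third, one bounds the norm: $\qsvlf M \leq \qclone{\widetilde z_\varepsilon} = \vol(M)/\vol^{\text{alg}}_{\hyp^n} c_\varepsilon \leq \vol(M)/(v_n^\square - \varepsilon)$, because the total mass of the smeared measure is $1$ (the cubes being smeared are generically non-degenerate, hence survive in the quotient $C^\square_*/D_*$). Letting $\varepsilon \to 0$ yields $\qsvlf M \leq \vol(M)/v_n^\square$.

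The main obstacle I anticipate is the bookkeeping that makes ``$\int_{\Gamma\backslash G}$'' into a bona fide \emph{locally finite} cubical chain with the claimed $\ell^1$-norm, and in particular verifying that degenerate cubes do not secretly carry positive mass (which would break the norm estimate) while the boundary really does vanish after passing to $C^\square_*(M;\R) = Q_*(M;\R)/D_*(M;\R)$. This is standard for simplices but must be redone for cubes: one needs that for $\mu$-almost every $\Gamma g$ the cube $g \cdot c_\varepsilon$ is strongly non-degenerate (true since the degenerate locus in $G$ has measure zero, $c_\varepsilon$ itself being non-degenerate), and one needs the measurable section / fundamental-domain argument identifying the integral with a locally finite sum. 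I would handle this by citing the analogous treatment in the simplicial case and checking that the only structural input used there — invariance of the measure, continuity of the volume function (Remark~\ref{remark:cont}), finiteness of $\vol(M)$, and the positivity/non-degeneracy guaranteed by Proposition~\ref{prop:approxcubes} — all have cubical analogues established above.
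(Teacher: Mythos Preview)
Your overall strategy---smearing a near-maximal geodesic cube $c_\varepsilon$ over $M$---matches the paper's, and the ingredients you cite (Proposition~\ref{prop:approxcubes}, the volume pairing, the limit $\varepsilon \to 0$) are the right ones. Two steps, however, have genuine gaps.

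First, your cycle argument does not work with $G = \Isom^+(\hyp^n)$. You claim the boundary vanishes because ``each face appears with both signs under the two isometries sending $c_\varepsilon$ to a cube sharing that face,'' but for a generic geodesic cube the $2n$ faces are pairwise non-congruent, so no orientation-preserving isometry carries one onto another; smearing only over~$\Isom^+$ does \emph{not} make the boundary terms cancel. The paper instead smears over the full isometry group~$G = \Isom(\hyp^n)$, weighting the orientation-reversing part with a minus sign, i.e., uses coefficients $a^+_{\varepsilon,x} - a^-_{\varepsilon,x}$ with $a^\pm_{\varepsilon,x} = \mu_G(\Omega^\pm_{\varepsilon,x})$. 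The cancellation is then Thurston's reflection trick: because $c_\varepsilon$ is \emph{geodesic}, each face lies in a hyperbolic hyperplane, so there is a reflection $\rho_{k,i} \in G^-$ fixing that face pointwise; from $G^- = G^+ \cdot \rho_{k,i}$ and bi-invariance of~$\mu_G$ one gets that the $G^+$- and $G^-$-contributions to each boundary face coincide and hence cancel. This is precisely where ``geodesic'' (rather than merely ``straight'') enters, and your sketch does not isolate that point.

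Second, your passage from $\int_{\Gamma\backslash G}\smear(g)\,d\mu$ to an element of~$C_n^{\square,\mathrm{lf}}(M;\R)$ is not the fundamental-domain bookkeeping you describe: chopping a fundamental domain into Borel pieces on which the cubes vary continuously still yields a measure on singular cubes, not a locally finite formal $\R$-linear combination of them. The paper discretizes genuinely, via a $\Gamma$-net $(\Lambda,(B_x)_{x\in\Lambda})$ of mesh at most~$\delta$: each smeared cube is replaced by the straight cube $\sigma_x = [x]^\square$ on the net vertices its corners fall into, with coefficient the $\mu_G$-measure of the set of isometries producing that cell pattern. This gives an honest locally finite chain (the diameter bound~\eqref{diam} together with proper discontinuity of the $\Gamma$-action yields local finiteness), and now the $\delta$-clause of Proposition~\ref{prop:approxcubes} is exactly what guarantees that every $\sigma_x$ with nonzero coefficient is positively oriented with volume at least~$v_n^\square - 2\varepsilon$, whence $\qsvlf M \leq \vol(M)/(v_n^\square - 2\varepsilon)$. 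The degeneracy worry you raise evaporates in this discrete picture, since each such $\sigma_x$ is automatically non-degenerate; conversely, your idealized norm computation $\qclone{\widetilde z_\varepsilon} = \vol(M)/\vol_{\hyp^n}c_\varepsilon$ is only available in the measure-homology formalism, which you have not set up.
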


Before getting into the details of the proof, we mention that this 
allows to calculate cubical simplicial volume of hyperbolic $3$-manifolds:

\begin{cor}\label{cor:hyp3}
  Let $M$ be a complete oriented hyperbolic $3$-manifold of finite volume 
  without boundary. Then
  \[ \qsvlf M = \frac 15 \cdot \svlf M. 
  \]
\end{cor}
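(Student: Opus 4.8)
The plan is to combine the general upper bound of Theorem~\ref{thm:hypupper} with the lower bound of Corollary~\ref{cor:3lowerbound} and the known value of classical simplicial volume. First I would recall that, since $M$ is a complete oriented hyperbolic $3$-manifold of finite volume, Theorem~\ref{thm:svhyp} gives $\svlf M = \vol(M)/v_3^\triangle$. By Example~\ref{exa:coxeter}, the three-dimensional geodesic cube constant satisfies $v_3^\square = 5 \cdot v_3^\triangle$, the key input being Coxeter's observation that the ideal regular $3$-cube decomposes into five ideal regular tetrahedra. Plugging these into Theorem~\ref{thm:hypupper} yields
\[
  \qsvlf M \leq \frac{\vol(M)}{v_3^\square} = \frac{\vol(M)}{5 \cdot v_3^\triangle} = \frac15 \cdot \svlf M.
\]

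For the reverse inequality I would invoke Corollary~\ref{cor:3lowerbound}, applied to the (open, boundaryless) $3$-manifold $M$, which gives $\qsvlf M \geq \frac15 \cdot \svlf M$ directly — this is exactly the lower bound coming from the decomposition of the $3$-cube into five simplices. Combining the two inequalities gives $\qsvlf M = \frac15 \cdot \svlf M$, as claimed.

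The only point requiring a little care is that Theorem~\ref{thm:hypupper} and Corollary~\ref{cor:3lowerbound} must both be applicable to the same object: Theorem~\ref{thm:hypupper} is stated for complete oriented hyperbolic $n$-manifolds of finite volume (with locally finite cubical simplicial volume $\qsvlf{\cdot}$), and Corollary~\ref{cor:3lowerbound} covers oriented $3$-manifolds without boundary in the locally finite setting; both conditions hold here. I do not expect any genuine obstacle in this corollary itself — all the work has been front-loaded into Theorem~\ref{thm:hypupper} (the cubical smearing argument) and into the combinatorial identity $v_3^\square = 5 \cdot v_3^\triangle$. The proof is therefore essentially a one-line assembly of previously established facts; the ``hard part'', such as it is, is merely checking that the constant $5$ appearing in the lower bound of Corollary~\ref{cor:3lowerbound} matches the factor $v_3^\square/v_3^\triangle = 5$ coming from the hyperbolic geometry, which is precisely why the bounds are sharp in dimension~$3$.
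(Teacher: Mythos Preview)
Your proof is correct and matches the paper's own argument: both combine Theorem~\ref{thm:hypupper}, Corollary~\ref{cor:3lowerbound}, Theorem~\ref{thm:svhyp}, and the identity $v_3^\square = 5 \cdot v_3^\triangle$. The paper additionally cites Lemma~\ref{lem:ideal} to justify passing from the ideal regular cube (which a priori only gives~$\overline{v}_3^\square$) to the non-ideal supremum~$v_3^\square$, but since Example~\ref{exa:coxeter} already asserts $v_3^\square = 5 \cdot v_3^\triangle$, that step is implicitly packaged into your citation.
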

\begin{proof}As we have already mentioned
  it is a well-known fact from hyperbolic geometry that $v_3^\square =
  5 \cdot v_3^\triangle$ holds. Hence, Theorem \ref{thm:svhyp}, Lemma \ref{lem:ideal},
  Theorem \ref{thm:hypupper}, and Corollary~\ref{cor:3lowerbound}
  yield the conclusion.
\end{proof}

We will now prove Theorem~\ref{thm:hypupper}, using a discrete cubical
version of the smearing construction by Thurston that already proved
useful for ordinary simplicial volume of hyperbolic
manifolds~\cite{thurston,FP}. 

Let us first fix some notation: Let $M$ be a complete oriented
hyperbolic $n$-manifold of finite volume without boundary. Because
volume and cubical simplicial volume are additive with respect to
connected components, we may assume without loss of generality that
$M$ is connected. Let $\Gamma$ be the fundamental group of~$M$, and
let $\pi \colon \hyp^n \longrightarrow M$ be the universal covering
map. Then $\Gamma$ acts isometrically and properly discontinuously via
deck transformations on~$\hyp^n$. 

Let $G$ be the isometry group of~$\hyp^n$, endowed with the Haar
measure~$\mu_G$; we normalise the Haar measure~$\mu_G$ according to
the volume~$\vol_{\hyp^n}$ on~$\hyp^n$ \cite[Lemma 11.6.4]{ratcliffe};
the group $G$ is
unimodular~\cite[Proposition~C.4.11]{benedettipetronio}, whence
$\mu_G$ is bi-invariant.  Moreover, we let $G^+$ and $G^-$ be the
subset of orientation preserving (or reversing, respectively)
isometries. 

Roughly speaking, the smearing works as follows: For small~$\varepsilon$ we pick
a geodesic $n$-cube~$c_\varepsilon$ as provided by Proposition~\ref{prop:approxcubes},  
and we would want to consider the chain~$\sum_{[g] \in
  \Gamma \setminus G} \pi \circ (g\cdot c_\varepsilon)$. Because this sum is not 
locally finite, we discretise the process, using so-called~$\Gamma$-nets. 

Let $R \in \R_{>0}$. A \emph{$\Gamma$-net of mesh size
  at most~$R$ in~$\hyp^n$} is given by a discrete
subset~$\Lambda\subset \hyp^n$, called set of
\emph{vertices}, and a collection of Borel sets
$(B_x)_{x\in \Lambda}$ in~$\hyp^n$, called \emph{cells}, such that
the following conditions hold:
\begin{enumerate}
  \item\label{zero} The quotient~$\pi(\Lambda) \subset M$ is locally 
    finite. 
  \item\label{one} For all~$x \in \Lambda$ we have~$x \in B_x$.
    Moreover, the sets~$(B_x)_{x \in \Lambda}$ are pairwise disjoint 
    and
    $\hyp^n=\bigcup_{x\in\Lambda} {B}_x$.  
  \item\label{two} 
    For all~$\gamma \in \Gamma$, $x \in \Lambda$ we have
    $\gamma \cdot x \in \Lambda$ 
    and 
    $\gamma \cdot B_x = {B}_{\gamma \cdot x }$. 
  \item\label{three}
    For all~$x \in \Lambda$ we have~$\diam_{\hyp^n} B_x \leq R$.
\end{enumerate}

\begin{lem}
  For every~$R \in \R_{>0}$ and every finite set~$S \subset \hyp^n$, 
  there exists a $\Gamma$-net of mesh size at most~$R$ in~$\hyp^n$ such 
  that the points of~$S$ are contained in the interior of the cells.
\end{lem}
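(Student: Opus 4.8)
The plan is to construct the $\Gamma$-net by first building a fundamental-domain-like tessellation of $\hyp^n$ downstairs in $M$ and then lifting it equivariantly. First I would fix a discrete set of vertices in $M$: pick a maximal $R/2$-separated subset $P \subset M$ (which exists and is automatically locally finite, and which we may enlarge finitely to ensure that $\pi(S)$ is contained in it, after perturbing so that no point of $P$ lies on a future cell boundary). Then I would choose a Borel partition $(C_p)_{p \in P}$ of $M$ with $p \in C_p$ and $\diam C_p \le R$ for every $p$; the natural candidate is a Dirichlet/Voronoi-type decomposition, $C_p := \{y \in M \mid d_M(y,p) \le d_M(y,p') \text{ for all } p' \in P\}$, made into a genuine partition by breaking ties in a Borel-measurable way (e.g.\ using a fixed enumeration of $P$), with the diameter bound following from maximality of $P$. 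Since $P$ is $R/2$-separated and maximal, every point of $M$ is within $R/2$ of some $p \in P$, so $\diam C_p \le R$.

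Next I would pull this back to $\hyp^n$: set $\Lambda := \pi^{-1}(P)$ and, for each $x \in \Lambda$ lying over $p = \pi(x)$, let $B_x$ be the connected component of $\pi^{-1}(C_p)$ containing $x$ --- or, more carefully, to keep the partition property and the $\Gamma$-equivariance transparent, I would use the path-lifting/evenly-covered structure: since $\diam C_p \le R$ and $\pi$ is a covering, for $R$ small enough each $C_p$ is contained in an evenly covered open set, so $\pi^{-1}(C_p) = \bigsqcup_{x \in \pi^{-1}(p)} B_x$ with $\pi|_{B_x} \colon B_x \to C_p$ a bijection and $x \in B_x$. (If one does not want to assume $M$ has positive injectivity radius uniformly, one instead chooses the original partition $(C_p)$ finer than the local injectivity radius, which is harmless since we only need $\diam C_p \le R$ for the \emph{given} $R$, and we may always shrink.) Conditions \eqref{zero}--\eqref{three} are then checked one by one: \eqref{zero} holds because $\pi(\Lambda) = P$ is locally finite; \eqref{one} because the components $B_x$ partition $\pi^{-1}(C_p)$ and the $C_p$ partition $M$; \eqref{three} because $\pi$ is a local isometry and $\diam C_p \le R$; and \eqref{two} because $\Lambda = \pi^{-1}(P)$ is $\Gamma$-invariant and $\Gamma$ permutes the sheets over each $C_p$, so $\gamma \cdot B_x$ is again a component of $\pi^{-1}(C_{\pi(x)})$, namely the one containing $\gamma \cdot x$, i.e.\ $B_{\gamma \cdot x}$. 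Finally, the points of $S$: having arranged $\pi(S) \subset P$ and $\pi(S)$ avoiding the (measure-zero, and after a generic perturbation, closed-nowhere-dense) boundary set $\bigcup_p \partial C_p$, each $s \in S$ with $\pi(s) = p$ satisfies $s \in \pi^{-1}(\{p\}) \subset \pi^{-1}(\Int C_p)$, and lies in the interior of the unique cell $B_s$ over $p$ containing it.

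The main obstacle I expect is the measure-theoretic bookkeeping around the cell \emph{boundaries}: a Voronoi partition gives closed sets that overlap on their boundaries, so one must break ties to get genuinely disjoint Borel cells while preserving both $x \in B_x$ and $\Gamma$-equivariance, and simultaneously guarantee that the finitely many prescribed points of $S$ land strictly in interiors. The clean way around this is to do all the tie-breaking $\Gamma$-equivariantly downstairs in $M$ (where there is no group action to respect), using a fixed Borel enumeration of the countable set $P$, and only afterwards to note that moving the finitely many points $\pi(s)$ off the boundary stratum is possible because that stratum is closed with empty interior (equivalently, has measure zero and we may choose $P$ in general position). Everything else --- local finiteness, the diameter bound, invariance --- is then a direct consequence of $\pi$ being a Riemannian covering and $\Lambda$ being the full preimage of $P$.
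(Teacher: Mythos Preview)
Your proposal is correct and follows the same route as the paper: the paper's proof is a single sentence saying that such a net is obtained by lifting to~$\hyp^n$ and $\Gamma$-translating a decomposition of~$M$ into small Borel sets (with a reference to \cite[Lemma~3.10]{FP}), and you have simply spelled out one concrete way of doing this via Voronoi cells and ensured the interior condition by placing $\pi(S)$ among the centres.

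One small remark on the lifting step: your first suggestion of taking connected components of $\pi^{-1}(C_p)$ really does need each $C_p$ to be evenly covered, and in the cusped case this forces variable cell sizes as you note. A slightly cleaner packaging that avoids this bookkeeping is to fix a strict Borel fundamental domain $D \subset \hyp^n$ for $\Gamma$, partition $D$ directly into Borel pieces of diameter at most~$R$ with distinguished interior points (chosen so that a $\Gamma$-translate of each $s \in S$ lies in $D$ and is one of these points), and then transport by $\Gamma$; this yields conditions \eqref{zero}--\eqref{three} immediately without any appeal to injectivity radius. Either way the argument goes through.
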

\begin{proof}
  Such a net can be obtained by lifting to~$\hyp^n$ and translating
  with~$\Gamma$ a decomposition of $M$ into small Borel
  sets~\cite[Lemma~3.10]{FP}.
\end{proof}

We now let $\varepsilon \in (0, 1/2\cdot v_n^\square)$. By
Proposition~\ref{prop:approxcubes}, there exists $\delta>0$ and a geodesic
cube~$c_\varepsilon \colon \square^n \longrightarrow \hyp^n$ of large
volume with good approximation properties. By the previous lemma,
there is a $\Gamma$-net~$(\Lambda, (B_x)_{x \in \Lambda})$ of mesh
size at most~$\delta$ in~$\hyp^n$ such that the vertices
of~$c_\varepsilon$ are in the interior of their cells.  Finally, we
can perform the actual smearing construction -- smearing the model
cube~$c_\varepsilon$ over all of~$\hyp^n$ and whence~$M$: For $x \in
\Lambda^{J^n}$ we consider the associated straight cube
\[ \sigma_x := [x]^\square \colon \square^n \longrightarrow \hyp^n 
\]
and the Borel set 
\begin{align*}
  \Omega_{\varepsilon,x}^\pm := 
  \bigl\{ g\in G^\pm 
  \bigm|  \fa{j \in J^n}\  g \cdot c_\varepsilon(v_j) \in B_{x_j}
  \bigr\} 
  \subset G,
\end{align*}
and we abbreviate~$a_{\varepsilon,x}^{\pm} := \mu_G(\Omega_{\varepsilon,x}^\pm) \in \R_{\geq 0}$. 
The group~$\Gamma$ acts diagonally on~$\widetilde X := \Lambda^{J^n}$, and we write~$X$ 
for the corresponding quotient space. 

\begin{lem}[smearing]\label{lem:smearing}
  In this situation, the \emph{smeared chain} 
  \[ z_\varepsilon := \sum_{[x] \in X} ( a^+_{\varepsilon,x} \cdot \pi \circ \sigma_x
                       - a^-_{\varepsilon,x} \cdot \pi \circ \sigma_x) 
  \]
  has the following properties:
  \begin{enumerate}
  \item\label{it:lem1} 
    The definition of~$z_\varepsilon$ does not depend on the choice of
    representatives~$x$ of the orbits~$[x] \in X$.
  \item\label{it:lem2} 
    The chain~$z_\varepsilon$ indeed is a locally finite chain on~$M$.
  \item\label{it:lem3} 
    The chain~$z_\varepsilon$ is a cycle.
  \item\label{it:lem4} 
    The locally finite cubical cycle~$z_\varepsilon$ represents a non-zero 
    multiple of the cubical (locally finite) fundamental class of~$M$. 
  \end{enumerate}
\end{lem}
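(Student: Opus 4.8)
The plan is to verify the four claims of Lemma~\ref{lem:smearing} in order, exploiting the $\Gamma$-equivariance built into the $\Gamma$-net and the bi-invariance of the Haar measure~$\mu_G$.

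\emph{Ad~\enumref{it:lem1}.} First I would show that the summand associated with~$[x] \in X$ is well defined. If $x' = \gamma \cdot x$ for some~$\gamma \in \Gamma$, then $\pi \circ \sigma_{x'} = \pi \circ \sigma_x$ because $\pi$ is $\Gamma$-invariant and $\sigma_{\gamma \cdot x} = \gamma \cdot \sigma_x$ (straight cubes are natural under isometries). It therefore suffices to check $a^\pm_{\varepsilon,x'} = a^\pm_{\varepsilon,x}$. But $\Omega^\pm_{\varepsilon, \gamma \cdot x} = \{ g \in G^\pm \mid g \cdot c_\varepsilon(v_j) \in B_{\gamma \cdot x_j}\, \forall j\}$, and since $B_{\gamma \cdot x_j} = \gamma \cdot B_{x_j}$ by net property~\enumref{two}, the condition $g \cdot c_\varepsilon(v_j) \in \gamma \cdot B_{x_j}$ is equivalent to $(\gamma^{-1} g) \cdot c_\varepsilon(v_j) \in B_{x_j}$; that is, $\Omega^\pm_{\varepsilon,\gamma\cdot x} = \gamma \cdot \Omega^\pm_{\varepsilon,x}$. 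Left-invariance of~$\mu_G$ then gives equality of the measures.

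\emph{Ad~\enumref{it:lem2}.} Here I would argue that the image of any compact set~$K \subset M$ meets only finitely many of the cubes~$\pi \circ \sigma_x$ with $a^\pm_{\varepsilon,x} \neq 0$. The key point is: $a^\pm_{\varepsilon,x} > 0$ forces, for each~$j$, the existence of some~$g \in G$ with $g \cdot c_\varepsilon(v_j) \in B_{x_j}$, hence (using $\diam_{\hyp^n} B_{x_j} \leq \delta$ and that $c_\varepsilon$ has fixed finite diameter, say~$D$) all the vertices~$x_j$ lie within distance $D + 2\delta$ of the single point $g \cdot c_\varepsilon(v_{j_0})$; combined with~\eqref{diam} this bounds $\diam_{\hyp^n} \sigma_x \leq D + 2\delta$. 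So up to the $\Gamma$-action there are only finitely many relevant orbits~$[x]$ meeting any given~$\pi$-fibre over~$K$, using local finiteness of~$\pi(\Lambda)$ (property~\enumref{zero}) and properness of the $\Gamma$-action. A slightly more careful bookkeeping is needed to see that the coefficients are finite, i.e.\ $a^\pm_{\varepsilon,x} < \infty$: the set $\Omega^\pm_{\varepsilon,x}$ is contained in $\{g \mid g \cdot c_\varepsilon(v_{j_0}) \in B_{x_{j_0}}\}$, which has finite $\mu_G$-measure because $\mu_G$ is normalised to~$\vol_{\hyp^n}$ via the orbit map at~$c_\varepsilon(v_{j_0})$ and $B_{x_{j_0}}$ has finite volume.

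\emph{Ad~\enumref{it:lem3} and~\enumref{it:lem4}.} For the cycle property I would compute $\partial z_\varepsilon$ in~$C^\square_{n-1}(M;\R)$ and show the terms cancel in pairs. The cubical boundary of $\pi \circ \sigma_x$ is an alternating sum of the $2n$ facial straight cubes $\pi \circ \sigma_{x^{(j,i)}}$, where $x^{(j,i)}$ is the sub-family of vertices of~$x$ indexed by the $(j,i)$-face of~$\square^n$; regrouping the double sum $\sum_{[x]} \sum_{(j,i)}$ and using that, by bi-invariance of~$\mu_G$ and the measure identity $\mu_G = \int \cdots$, the weight of a given $(n-1)$-face appears with opposite signs coming from the two $n$-cubes adjacent across it (one contributing via~$G^+$, one via~$G^-$, after reversing orientation), gives $\partial z_\varepsilon = 0$. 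This is the step I expect to be the main obstacle: one has to set up the indexing of facial sub-families carefully and check the measure-theoretic identity $a^\pm_{\varepsilon, x^{(j,i)}} = \int \cdots$ that matches a face weight to the sum of the two adjacent $n$-cube weights, keeping track of signs and of the $G^+ \leftrightarrow G^-$ swap; this is where the argument genuinely differs from (and is more delicate than) the simplicial smearing of Thurston. Finally, for~\enumref{it:lem4}, I would use that $z_\varepsilon$ is $G$-invariant by construction (smeared uniformly over all of~$\hyp^n$), hence its class is a multiple of the fundamental class by the local characterisation of fundamental classes (cf.\ Remark~\ref{rem:normalisation}); to see the multiple is non-zero, I would pair with the volume cocycle: integrating the signed-volume form over~$z_\varepsilon$ computes, up to the total Haar mass, a positive multiple of $\vol_{\hyp^n} c_\varepsilon > 0$ (all nearby cubes being positively oriented by Proposition~\ref{prop:approxcubes}\enumref{it:prop2}), so the class cannot vanish.
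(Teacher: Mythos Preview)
Your arguments for parts~\enumref{it:lem1} and~\enumref{it:lem2} are correct and essentially identical to the paper's.

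For part~\enumref{it:lem3} there is a genuine gap. The mechanism you sketch --- ``the weight of a given $(n-1)$-face appears with opposite signs coming from the two $n$-cubes adjacent across it'' --- is not how the cancellation actually works, and your sketch omits the one hypothesis on which the whole step rests. The paper uses Thurston's \emph{reflection trick}: since $c_\varepsilon$ is a \emph{geodesic} cube, each $(k,i)$-face lies in a hyperbolic hyperplane, so there is a reflection~$\rho_{k,i}\in G^-$ fixing that face pointwise. After regrouping, the coefficient of a given~$\pi\circ\sigma_y$ in~$\partial z_\varepsilon$ is
\[
  \sum_{k=1}^n\sum_{i=0}^1 (-1)^{k+i}\Bigl(
    \mu_G\bigl\{g\in G^+ \bigm| \fa{j} g\cdot c_\varepsilon(v_{j+_ki})\in B_{y_j}\bigr\}
    - \mu_G\bigl\{g\in G^- \bigm| \dots\bigr\}
  \Bigr),
\]
and for each fixed~$(k,i)$ the map~$g\mapsto g\rho_{k,i}$ is a measure-preserving bijection $G^+\to G^-$ that preserves the face constraint (because $\rho_{k,i}$ fixes each vertex~$c_\varepsilon(v_{j+_ki})$), so the two measures agree and the term vanishes. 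There is no pairing of ``two adjacent $n$-cubes''; the cancellation happens, face index by face index, between the $G^+$- and $G^-$-parts of the same integral. Your sketch misses both the reflection and the geodesic-cube hypothesis that produces it; without that hypothesis the argument collapses. Incidentally, this is \emph{exactly} the trick from Thurston's simplicial smearing, contrary to your remark that the cubical case ``genuinely differs'' here.

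For part~\enumref{it:lem4}, your volume-pairing approach is a legitimate alternative to the paper's local computation --- indeed, the paper carries out essentially that integral in the proof of Theorem~\ref{thm:hypupper} immediately after the lemma. However, the assertion ``$z_\varepsilon$ is $G$-invariant'' does not make sense as written, since $z_\varepsilon$ is a chain on~$M$ and $G$ does not act on~$M$. The paper instead picks a point~$m\in M$ that $\pi$-lifts to the interior of~$\im c_\varepsilon$ and checks directly that the class of~$z_\varepsilon$ in~$H_n^\square(M,M\setminus\{m\};\R)$ is a positive multiple of the orientation generator, using that every~$\sigma_x$ with~$a^+_{\varepsilon,x}\neq 0$ is positively oriented and every~$\sigma_x$ with~$a^-_{\varepsilon,x}\neq 0$ is negatively oriented (by Proposition~\ref{prop:approxcubes}).
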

\begin{proof}
  \emph{Ad~\ref{it:lem1}.}
  By construction, we have~$\sigma_{\gamma \cdot x} = \gamma \cdot \sigma_x$ 
  for all~$x \in \widetilde X$,~$\gamma \in \Gamma$, and hence
  $\pi \circ \sigma_{\gamma \cdot x} = \pi \circ \sigma_x. 
  $
  Furthermore, $a_{\varepsilon,\gamma \cdot x}^\pm = a_{\varepsilon,x}^\pm$ by the 
  equivariance of the net and invariance of the Haar measure~$\mu_G$.
  
  \emph{Ad~\ref{it:lem2}.} 
  This is a size argument: We set 
  $Y := \{ x \in \widetilde X
          \mid a_{\varepsilon,x}^+ \neq 0 \text{ or } a_{\varepsilon,x}^- \neq 0\}. 
  $ 
  If $x \in Y$, then there is a~$g \in G$ with
  \[ \fa{j \in J^n} g \cdot c_{\varepsilon}(v_j) \in B_{x_j} 
  \]
  and so~$\diam_{\hyp^n} x \leq 2 \cdot (2 \cdot \delta + \diam_{\hyp^n} \im c_\varepsilon)$. 
  Moreover, the diameter of straight cubes in~$\hyp^n$ is controlled in terms of the 
  diameter of its set of vertices (equation~\eqref{diam}). Hence, there is~$L \in \R_{>0}$ such that
  \[ \fa{x \in Y} \diam_{\hyp^n} \im \sigma_x \leq L. 
  \]
  So, if $K \subset M$ is compact, then 
  \[ Y_K := 
     \bigl\{ x \in Y
     \bigm| \im (\pi \circ \sigma_x) \cap K \neq \emptyset
     \bigr\}
     \subset 
     \bigl(\Lambda \cap \pi^{-1}(B_{2 \cdot L}(K))\bigr)^{J^n}.
  \]
  Now the facts that $\pi(\Lambda)$ is locally finite, that the
  $\Gamma$-action on~$\hyp^n$ is properly discontinuous, and that
  tuples in~$Y$ have diameter at most~$L$ show that the
  quotient~$\Gamma \setminus Y_K$ is finite. 
  Therefore, the infinite chain~$z_\varepsilon$ is locally finite.

  \emph{Ad~\ref{it:lem3}.} 
  We use Thurston's reflection trick to show that $z_\varepsilon$ is a
  cycle: At this point it is crucial that every face of the model
  cube~$c_\varepsilon$ lies in a hyperplane. More precisely, for
  all~$k \in \{1,\dots,n\}$ and~$i \in \{0,1\}$ let $\rho_{k,i} \in G$
  be the hyperbolic reflection at a hyperplane that contains the
  $(k,i)$-face of~$c_\varepsilon$.

  We can now argue similarly to the simplicial
  case~\cite[p.~116]{benedettipetronio}: Clearly, in the expanded
  expression~$\partial^\square_n z_\varepsilon$, only $(n-1)$-cubes of
  the form~$\pi \circ \sigma_y$ with~$y \in \Lambda^{J^{n-1}}$
  occur. One easily deduces from the construction of~$z_\varepsilon$
  that such a cube has the coefficient
  \begin{align*}
    b_{\varepsilon,y} := 
    \sum_{k=1}^n \sum_{i=0}^1 (-1)^{k+i} 
    \cdot \Bigl(& \;\mu_G \bigl\{ g \in G^+ 
                \bigm| \fa{j \in J^{n-1}} g \cdot c_\varepsilon(v_{j +_k i}) \in B_{y_j}
                \bigr\}
                \\
        - & \;\mu_G \bigl\{ g \in G^- 
                \bigm| \fa{j \in J^{n-1}} g \cdot c_\varepsilon(v_{j +_k i}) \in B_{y_j}
                \bigr\}\Bigr)
  \end{align*}
  in~$\partial^\square_n z_\varepsilon$; here, $j +_k i$ denotes the $n$-tuple that 
  results if $i$ is inserted at position~$k$ into~$j$, and we used $y$ as the 
  representative of its own $\Gamma$-orbit.

  Let $k \in \{1,\dots,n\}$ and $i \in \{0,1\}$. 
  Using $G^- = G^+ \cdot \rho_{k,i}$, the bi-invariance of~$\mu_G$, 
  and the fact that $\rho_{k,i}$ fixes the $(k,i)$-face of~$c_\varepsilon$, 
  we obtain 
  \begin{align*}
      & \ \mu_G \bigl\{ g \in G^- 
                \bigm| \fa{j \in J^{n-1}} g \cdot c_\varepsilon(v_{j +_k i}) \in B_{y_j}
                \bigr\}
   \\ = 
      & \ \mu_G \bigl\{ g \in G^+ 
                \bigm| \fa{j \in J^{n-1}} g \cdot \rho_{k,i} \cdot c_\varepsilon(v_{j +_k i}) \in B_{y_j}
                \bigr\}
   \\ = 
      & \ \mu_G \bigl\{ g \in G^+ 
                \bigm| \fa{j \in J^{n-1}} g \cdot c_\varepsilon(v_{j +_k i}) \in B_{y_j}
                \bigr\}
  \end{align*}
  Hence, $b_{\varepsilon,y} = 0$. Therefore, $z_\varepsilon$ is a cycle.

  \emph{Ad~\ref{it:lem4}.}  It suffices to check the claim locally. To
  this end, let $m\in M$ be chosen in such a way that it $\pi$-lifts
  to the interior of~$\im c_\varepsilon$. We now show that
  $z_\varepsilon$ represents a non-trivial class in~$H_n^\square(M,M
  \setminus \{m\};\R)$: We choose~$\xi \in X$ so that $\id \in
  \Omega^+_{\varepsilon,\xi}$. Then $z_\varepsilon$ represents 
  in~$H_n^\square(M,M\setminus \{m\};\R)$ the class given by 
  the (finite) sum
  \[ z_{\varepsilon,m} := \sum_{\substack{[x] \in X,\\ m \in \im \pi \circ \sigma_x}}
     (a_{\varepsilon,x}^+ \cdot \pi\circ \sigma_x 
     - a_{\varepsilon,x}^- \cdot \pi\circ \sigma_x).
  \] 
  By choice of~$m$, the cube~$\pi \circ \sigma_{\xi}$ is a relative
  cycle for~$(M,M\setminus \{m\})$ and represents the orientation
  generator of~$H_n^\square(M,M\setminus \{m\};\R)$. Moreover, because
  the vertices of the cube~$c_\varepsilon$ lie in the interior of
  their cells in the chosen $\Gamma$-net, it is not hard to see that
  $a^+_{\varepsilon,\xi} > 0$. 

  On the other hand, for all~$x \in X$,
  by construction, $\pi\circ \sigma_x$ is positively oriented
  if~$a^+_{\varepsilon,x} \neq 0$ and negatively oriented
  if~$a^-_{\varepsilon,x} \neq 0$. Hence, the remaining terms in the
  defining sum for~$z_{\varepsilon,m}$ represent a non-negative multiple 
  of the orientation generator of~$H_n(M,M\setminus \{m\};\R)$. 
  Therefore, $z_{\varepsilon,m}$ does not represent the trivial class, and 
  so also~$z_\varepsilon$ does not represent the trivial class.
\end{proof}

Using Lemma~\ref{lem:smearing}, it is easy to complete the proof of
Theorem~\ref{thm:hypupper}: 

\begin{proof}[Proof of Theorem~\ref{thm:hypupper}]
  Let $\varepsilon \in (0,1/2\cdot v_n^\square)$. The smeared locally 
  finite cycle~$z_\varepsilon$ of~$M$ constructed in Lemma~\ref{lem:smearing} 
  has finite $\ell^1$-norm because
  \[  |z_\varepsilon|^\square_1 \leq \sum_{[x] \in X} (a^+_{\varepsilon,x} + a^-_{\varepsilon,x}) 
       \leq  2 \cdot \vol (M).
  \]
  By Lemma~\ref{lem:smearing}, $z_\varepsilon$ represents a non-zero 
  multiple of the locally finite cubical fundamental class of~$M$; 
  let $\alpha \in \R \setminus \{0\}$ be this multiple. 
  Moreover, the straight cubes in~$z_\varepsilon$ that occur with non-zero
  coefficients have uniformly bounded diameter (as was shown in the
  proof of Lemma~\ref{lem:smearing}) and so can be seen to be
  uniformly Lipschitz~\cite[Section~2.1.1]{loehsauer}. 
  Similarly 
  to the simplicial case, we then have
  \[ \int_{z_\varepsilon} d\Omega_M = \alpha \cdot \vol (M), 
  \]
where $\Omega_M$ is the volume form on $M$.
  Therefore, using the orientation behaviour of the straight
  cubes~$\sigma_x$ (as in the proof of Lemma~\ref{lem:smearing}) and
  the approximation properties of the model cube~$c_\varepsilon$
  underlying the construction of~$z_\varepsilon$, we obtain
  \begin{align*}
    \alpha \cdot \vol(M) 
    & = \sum_{[x] \in X} 
        \biggl( a_{\varepsilon,x}^+ \cdot \int \sigma_x^* d\Omega
              - a_{\varepsilon,x}^- \cdot \int \sigma_x^* d\Omega 
        \biggr)
        \\
    & = \sum_{[x] \in X}
        \bigl( a^+_{\varepsilon,x} \cdot \vol_{\hyp^n} \sigma_x 
             + a^-_{\varepsilon,x} \cdot \vol_{\hyp^n} \sigma_x
        \bigr)
        \\
    & \geq \sum_{[x] \in X} (a_{\varepsilon,x}^+ + a_{\varepsilon,x}^-) \cdot (v_n^\square - 2\cdot \varepsilon)
        \\
    & = |z_\varepsilon|_1^\square \cdot (v_n^\square - 2 \cdot \varepsilon).
  \end{align*}
  This implies
  \begin{align*}
    \qsvlf M \leq \frac1\alpha \cdot |z_\varepsilon|^\square_1
             \leq \frac{\vol (M)}{v_n^\square - 2 \cdot \varepsilon}.
  \end{align*}
  Taking~$\varepsilon \rightarrow 0$ gives the desired estimate.
\end{proof}

\subsection{Hyperbolic pieces -- lower bound}\label{subsec:hyplower}

Conversely, straight cubes give a lower bound:

\begin{thm}\label{thm:hyplower}
  Let $M$ be a complete oriented hyperbolic $n$-manifold of finite volume. 
  Then
  \[ \qsvlf M \geq \frac{\vol (M)}{w_n^\square}. 
  \]
\end{thm}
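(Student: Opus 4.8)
The plan is to bound the volume of any locally finite cubical fundamental cycle from above by the $\ell^1$-norm times $w_n^\square$, mimicking the classical Gromov--Thurston argument for the lower bound on simplicial volume of hyperbolic manifolds. Concretely, recall that we are using the explicit model of locally finite cubical simplicial volume through (possibly infinite) $\ell^1$-summable formal sums of non-degenerate singular cubes (see Example~\ref{exa:lfalternative} and the discussion at the start of Section~\ref{sec:hyp}). The key point is that for \emph{any} singular $n$-cube $\sigma\colon\square^n\to M$ one can \emph{straighten} it to a straight cube $\str\sigma$: lift $\sigma$ to $\hyp^n$, replace it by the straight cube on the images of the $2^n$ vertices of $\square^n$, and push back down to $M$. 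Straightening is $\Gamma$-equivariant on the universal cover and hence descends; it is compatible with the cubical face maps because a face of a straight cube is again a straight cube on the corresponding subset of vertices; it kills degenerate cubes (a straight cube on a set of vertices that does not depend on one coordinate direction is itself independent of that direction); and it is chain homotopic to the identity by the usual straight-line homotopy (which is again built from straight cubes). Therefore $\str$ induces a well-defined norm-nonincreasing chain endomorphism of $C_*^\square(M;\R)$, and of $C_*^{\mathrm{lf}\,\square}(M;\R)$, that is chain homotopic to the identity and hence fixes the locally finite cubical fundamental class.

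Next I would use the smooth volume form $\Omega_M$ on $M$ to extract the volume. Straight cubes in $\hyp^n$ are smooth (as recalled in Section~\ref{subsec:hypcubes}), so straight cubes in $M$ are smooth, and we can integrate $\Omega_M$ over them. For a fundamental cycle $z=\sum_{i} a_i\,\sigma_i$ I apply straightening to obtain the fundamental cycle $\str z=\sum_i a_i\,\str\sigma_i$ with $|\str z|_1^\square\le|z|_1^\square$, and then evaluate
\[
\vol(M)=\int_{\str z}\Omega_M=\sum_i a_i\int_{\square^n}(\str\sigma_i)^*\Omega_M.
\]
The left-hand equality is the standard fact that a fundamental cycle integrates the volume form to the total volume (this requires a short argument in the locally finite setting: one checks that $\int_{(\args)}\Omega_M$ is a well-defined continuous functional on locally finite smooth cubical chains vanishing on boundaries and on the kernel of the degree map to $H_n^{\mathrm{lf}}$, evaluating to $\vol(M)$ on the fundamental class; alternatively one argues via the de Rham isomorphism in locally finite (co)homology as in the simplicial case). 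For each $i$ we have
\[
\Bigl|\int_{\square^n}(\str\sigma_i)^*\Omega_M\Bigr|=\vol^{\text{alg}}_{\hyp^n}(\widetilde{\str\sigma_i})\le w_n^\square,
\]
since the lift of $\str\sigma_i$ is a straight cube in $\hyp^n$ and $w_n^\square$ is, by Proposition~\ref{volume}, the (finite) supremum of volumes of straight $n$-cubes. Combining,
\[
\vol(M)\le\sum_i|a_i|\cdot w_n^\square=w_n^\square\cdot|z|_1^\square\le w_n^\square\cdot|\str z|_1^\square\cdot\frac{|z|_1^\square}{|z|_1^\square},
\]
—more cleanly, $\vol(M)\le w_n^\square\cdot|z|_1^\square$—and taking the infimum over all locally finite cubical fundamental cycles $z$ gives $\vol(M)\le w_n^\square\cdot\qsvlf M$, i.e.\ the claim.

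There are two places where care is needed. The first, minor, point is the bookkeeping for infinite sums: one must check that straightening of a locally finite chain is again locally finite (straightening does not increase the diameter of the image of a cube, since the straight cube on a vertex set lies in the convex hull of those vertices, so it only shrinks supports) and that the integral $\int_{\str z}\Omega_M$ converges absolutely and may be computed term by term (this follows from $\ell^1$-summability of the coefficients together with the uniform bound $w_n^\square$ on the per-cube integrals). The second, and genuinely the main obstacle, is proving rigorously that $\int_{z}\Omega_M=\vol(M)$ for a locally finite smooth cubical fundamental cycle. In the simplicial case this is classical; here one needs the analogue for the cubical model. The cleanest route is to invoke that the cubical and simplicial locally finite homologies are naturally isometrically-up-to-bounded-distortion isomorphic via the chain homotopy equivalences of a cubical normed model (Proposition~\ref{prop:cubex}, Remark~\ref{rem:normalisation}), apply a cubical-to-simplicial chain map $\varphi$ that sends straight cubes to sums of straight simplices \emph{without changing the integral of $\Omega_M$} (the triangulation-by-cones map does exactly this, as each cube is cut into straight simplices covering it with the correct signs), and then quote the simplicial statement. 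Alternatively one develops de Rham theory directly for the cubical complex; either way, this compatibility of integration with the cubical structure is the technical heart of the argument, while everything else is a routine transcription of Thurston's straightening method.
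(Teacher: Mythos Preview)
Your proposal is correct and follows essentially the same straightening-plus-integration argument as the paper: define cubical straightening on~$M$ via lifts to~$\hyp^n$, observe it is chain homotopic to the identity, and then bound $\vol(M)=\int_{\str z}\Omega_M$ term by term using the uniform bound~$w_n^\square$ on volumes of straight cubes. One small caveat: your justification that straightening preserves local finiteness (``it only shrinks supports'') is not quite right---the straightened cube lies in the convex hull of the vertices of a lift, which need not project into the image of the original cube in~$M$; the paper instead uses that the cusps of~$M$ are geodesically convex (so a cube lying far out in a cusp straightens to a cube in the same cusp), and you should replace your diameter argument by this observation.
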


We will prove this theorem as in the simplicial case via straightening. 
To every singular $k$-cube $c:\square^k\rightarrow \hyp^n$ in $\hyp^n$ we may associate a straight $k$-cube $[c]^\square:=[(c(v_j))_{j\in J^k}]^\square$
where, as before, $(v_j)_{j\in J^k}$ denotes the vertices of the standard $k$-cube and $J^k:=\{0,1\}^k$.
Hence, by linearity we define the straightening map
$$
[\args]^\square_*:C_*^{\square}(\hyp^n;\R)\longrightarrow C_*^{\square}(\hyp^n;\R),
$$
which is easily seen to be a well-defined chain map. 

Let us now extend the straightening operation to locally finite cubical chains: 
If $M$ is a complete oriented hyperbolic $n$-manifold of finite volume and $c \colon \square^k \longrightarrow M$ is a singular $k$-cube, then we define
\[ [c]^\square_k := \pi \circ [\widetilde c]^\square_k \colon \square^n \longrightarrow M, 
\]
where $\pi \colon \hyp^n \longrightarrow M$ is the universal covering
map and where $\widetilde c$ denotes some $\pi$-lift of~$c$. This construction 
extends to a well-defined chain map
\[ 
[\args]^\square_*:C_*^{\square,\mathrm{lf}}(M;\R)\longrightarrow C_*^{\square,\mathrm{lf}}(M;\R).
\]
In order to see that locally finite chains indeed are mapped to
locally finite chains, one can proceed as follows: We consider a
compact core $N$ of the complete hyperbolic manifold~$M$, i.e., a
subset of~$M$ whose complement $M\backslash N$ in $M$ is a disjoint
union of finitely many geodesically convex cusps of $M$.  Using such a
decomposition it is easy to show that the straightening map indeed
maps locally finite chains to locally finite chains (see for
instance~\cite[Lemma~4.3]{KimKim} for the simplicial case).

This straightening map is chain homotopic to the identity map. Indeed, for 
a singular cube~$c:\square^k\rightarrow \hyp^n$, we consider the straight 
homotopy
$$
\begin{array}{rrcl}
F^k_c:&\square^k\times [0,1]& \longrightarrow & \hyp^n\\
&((t_1,\dots,t_k),s) &\longmapsto & (1-s)\cdot c(t_1,\dots,t_k)+ s \cdot[c]^\square(t_1,\dots,t_k).
\end{array}
$$
We then define a chain homotopy via  
$$
\begin{array}{rrcl}
h_k:& C_k^{\square}(\hyp^n;\R)&\longrightarrow & C_{k+1}^{\square}(\hyp^n;\R)\\
&c &\longmapsto & F_c^k.
\end{array}
$$
It is easy to verify that~$h_*$ is a homotopy between the identity and the straightening map on~$\hyp^n$. Moreover, this argument also descends to the locally 
finite straightening on~$M$. 

\begin{proof}[Proof of Theorem~\ref{thm:hyplower}]
Let $c=\sum_i a_i \cdot c_i$ be a locally finite representative of the fundamental class of $M$ and $\widetilde{c}=\sum_i a_i\cdot \widetilde{c}_i$ be
a lift to the universal cover. Then, $\widetilde c$ is smooth and we have
$$
\vol(M)
=\int_{[\widetilde{c}]_n^\square} d\Omega
= \sum_i a_i \cdot \int_{\square^n} {[\widetilde c_i]^\square_n}^* d\Omega 
\leq\sum_i |a_i|\vol_{\hyp^n} [\widetilde{c}_i]_n^\square\leq |c|_1^\square\cdot w_n^\square,
$$
where for the last inequality we are using Proposition~\ref{volume}.
Passing to the infimum over all the representatives we have
\[\qsvlf M\geq \frac{\vol(M)}{w_n^\square}.
\qedhere
\]
\end{proof}

Unfortunately, in dimension~$n \geq 4$, it is unknown whether the
bounds~$w_n^\square$ and~$v_n^\square$ match and what the exact
relation with the volume of ideal regular cubes is. 

\subsection{Proportionality principle for hyperbolic manifolds}\label{subsec:hyppp}

Analogously, to the case of ordinary simplicial volume, also cubical
simplicial volume of hyperbolic manifolds satisfies a proportionality
principle. For simplicity, we will restrict ourselves to the closed
hyperbolic case. Let $n \in \N$ and let $M$ and $N$ be oriented closed 
connected hyperbolic $n$-manifolds. Then
\[ \frac{\qsv M}{\vol (M)}
   = \frac{\qsv N}{\vol (N)}. 
\]
We sketch how this proportionality can be obtained by a
discrete smearing map (similar to ordinary simplicial
volume~\cite{thurston,loehsauer}):

Let us first fix some notation (similar to Section~\ref{subsec:hypupper}):
Let $\Gamma := \pi_1(M)$, let $G := \Isom^+(\hyp^n)$ and let
$\mu_G$ be the Haar measure on~$G$ normalised by~$\mu_G(G/\Gamma) =
1$. Moreover, let $D \subset \hyp^n$ be a measurable, strict
fundamental domain for the deck transformation action of~$\Gamma$
on~$\hyp^n \cong \widetilde M$ and let $\pi_M \colon \hyp^n
\longrightarrow M$, $\pi_N \colon \hyp^n \longrightarrow N$ be the
universal covering maps. Furthermore, let $\delta \in \R_{>0}$. We
then choose a $\Gamma$-net~$(\Lambda, (B_{x})_{x \in \Lambda})$ of
mesh size at most~$\delta$, and we let $X$ be the quotient
of~$\Lambda^{J^n}$ with respect to the diagonal
$\Gamma$-action. For~$x \in \Lambda^{J^n}$ and a singular
cube~$c \colon \square^n \longrightarrow \hyp^n$, we set
\begin{align*}
  A_{x}(c) & := 
  \bigl\{ g \in G \bigm| \fa{j \in J^n} g \cdot c(v_j) \in B_{x_j} \bigr\} \\
  a_{x}(c) & := \mu_G(A_{x})\\
  c_{x} & := [x]^\square \colon \square^n \longrightarrow \hyp^n.
\end{align*}
Finally we define the discretised smearing map
\begin{align*}
  \smear^\delta_{N,M} \colon C_n^\square(N;\R)
  & \longrightarrow C_n^\square(M;\R)
  \\
  \map(\square^n,N) \ni c & \longmapsto
  \sum_{[x] \in X} a_{x} (\widetilde c) \cdot \pi_M \circ c_{x},
\end{align*}
where $\widetilde c$ is a $\pi_N$-lift
of~$c$. Straightforward calculations show that
$\smear^\delta_{N,M}$ has the following properties:
\begin{itemize}
  \item The map~$\smear_{N,M}^\delta$ is well-defined and induces a well-defined 
    map on cubical singular homology.
  \item We have $\| \smear_{N,M}^\delta\| \leq 1$ with respect
    to the cubical $\ell^1$-norms.
  \item If $c \in C_n^\square(N;\R)$ is a smooth cubical fundamental
    cycle of~$N$, then continuity of the volume of straight cubes
    in~$\hyp^n$ shows that
    \[ \lim_{\delta \rightarrow 0} \int_{\smear_{N,M}^\delta(c)} d\Omega_M 
       = \int_{c} d\Omega_N = \vol N.
    \]
\end{itemize}
An inductive smoothing procedure (e.g., through straightening) shows
that cubical simplicial volume can be computed via smooth cubical
fundamental cycles. Because integration determines the represented
class in homology, we obtain for~$\delta \rightarrow 0$ that
\[ \frac{\vol(N)}{\vol(M)} \cdot \qsv  M \leq \qsv N. 
\]
By symmetry, this proves proportionality for closed hyperbolic
manifolds.

In particular, for every~$n \in \N$ there is a constant~$C_n \in
\R_{>0}$ such that all oriented closed connected hyperbolic
$n$-manifolds~$M$ satisfy
\[ \qsv M = C_n \cdot \sv M.
\]
In view of Corollary~\ref{cor:hyp3} we have~$C_3 = 1/5$ and from
Theorems~\ref{thm:hypupper} and~\ref{thm:hyplower} we obtain
\[ \frac{w^\square_n}{v_n^\triangle} \geq C_n \geq \frac{v_n^\square}{v_n^\triangle}.
\]

\begin{question}
  What are the exact values of the factor~$C_n$ for~$n\geq 4$? Do we
  have $\qsv M = C_n \cdot \sv M$ for \emph{all} oriented closed
  connected $n$-manifolds or does the geometry of the manifolds affect
  the constant?
\end{question}

\section{Gluings}\label{sec:glue}

In this section, we will prove (sub-)additivity of generalised simplicial
volumes under suitable gluings, provided that the normed model in question 
is sufficiently geometric.

\begin{thm}\label{thm:glue}
  Let $(F,\varphi,\psi)$ be a geometric normed model of the singular
  chain complex and let $N$ be an oriented compact $n$-manifold with
  UBC boundary (e.g., all components are tori). Moreover, let
  $\partial N = B_+ \sqcup B_-$ be a decomposition of the boundary
  (into possibly disconnected pieces) and let $f \colon B_-
  \longrightarrow B_+$ be an orientation reversing homeomorphism. Then
  the glued $n$-manifold
  \[ M := N \bigm/ (B_- \cong_f B_+) 
  \]
  is closed, inherits an orientation of~$N$, and satisfies
  \[ \gsv M ^F \leq \gsvlf {N^\circ}^F. 
  \]
\end{thm}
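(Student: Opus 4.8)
The plan is to manufacture an $F$-fundamental cycle of $M$ out of a locally finite $F$-fundamental cycle of $N^\circ$: truncate it deep inside the ends of $N^\circ$, glue, and then cap off the small boundary that appears by means of the uniform boundary condition. We may assume $\gsvlf{N^\circ}^F<\infty$, since otherwise there is nothing to prove. Fix $\varepsilon>0$ and pick a locally finite $F$-fundamental cycle $z\in F_n^{\mathrm{lf}}(N^\circ)$ with $|z|^F\leq\gsvlf{N^\circ}^F+\varepsilon$. Using a collar $\partial N\times[0,\infty)\hookrightarrow N^\circ$ of the ends of $N^\circ$ and the geometric structure of the model -- which is precisely what allows one to describe locally finite $F$-chains concretely and to cut them along the collar with controlled error -- we obtain, for a sufficiently deep truncation level and after absorbing a collar homotopy into the resulting chain, a compactly supported chain $z'\in F_n(N)$ with the following properties: $z'$ is a relative $F$-fundamental cycle of $(N,\partial N)$; the boundary $\partial z'$ is supported on $\partial N$ and splits as $\partial z'=w_+ + w_-$ with $w_\pm\in F_{n-1}(B_\pm)$ a cycle representing the boundary-orientation fundamental class of $B_\pm$; and
\[ |z'|^F\leq|z|^F+\varepsilon, \qquad |\partial z'|^F\leq\varepsilon. \]
The estimate $|\partial z'|^F\leq\varepsilon$ is the crucial quantitative input. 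It is the analogue of the elementary fact that truncating a finite-$\ell^1$-norm locally finite cycle pushes all but an arbitrarily small amount of its mass off towards infinity (dominated convergence over the simplices that escape into the collar), and it carries over to geometric normed models via the Cauchy condition built into the definition of the locally finite semi-norm; this parallels the classical collar arguments for ordinary locally finite simplicial volume~\cite{loehphd,loehl1}.

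To glue, let $p\colon N\to M$ be the quotient map. Then $p$ restricts to a homeomorphism of $B_+$ onto the glued submanifold $T:=p(B_+)=p(B_-)\subset M$, and, identifying $T$ with $B_+$ along it, $p|_{B_-}=f$. Hence $\zeta_0:=F(p)(z')\in F_n(M)$ is compactly supported and satisfies
\[ \partial\zeta_0 = w_+ + F(f)(w_-)\in F_{n-1}(T). \]
Since $f$ is orientation reversing with respect to the boundary orientations of $B_\pm$, and these are exactly the orientations that make $M$ into an oriented manifold, the classes of $w_+$ and $F(f)(w_-)$ in $H_{n-1}(T;\R)$ are $+$ and $-$ the fundamental class of $T$; so $w_+ + F(f)(w_-)$ is a null-homologous $(n-1)$-cycle in $F(T)$, i.e.\ it lies in $\im\partial_n\subset F_{n-1}(T)$. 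Because $N$ has UBC boundary, each of the finitely many components of $T$, being homeomorphic to a boundary component of $N$, satisfies $(n-1)$-UBC with respect to $F$; let $\kappa\in\R_{\geq 0}$ be a common UBC constant. We therefore find $b\in F_n(T)$ with $\partial b=w_+ + F(f)(w_-)$ and
\[ |b|^F\leq\kappa\cdot|w_+ + F(f)(w_-)|^F\leq\kappa\cdot\bigl(|w_+|^F+|w_-|^F\bigr)\leq\kappa\cdot|\partial z'|^F\leq\kappa\cdot\varepsilon. \]
With $\iota\colon T\hookrightarrow M$ the inclusion, set $\zeta:=\zeta_0 - F(\iota)(b)\in F_n(M)$. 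Then $\partial\zeta=0$. Moreover, evaluating at a point $m$ in the interior part $N^\circ\subset M$ -- where $\zeta$ agrees with $\zeta_0=F(p)(z')$, the push-forward of the relative fundamental cycle $z'$ of $(N,\partial N)$, while the correction $F(\iota)(b)$ is supported on $T$ and contributes nothing -- the local characterisation of fundamental classes shows that $\varphi_n^M(\zeta)$ represents $\fclr M$. Thus $\zeta$ is an $F$-fundamental cycle of $M$, and
\[ \gsv M^F\leq|\zeta|^F\leq|z'|^F+|b|^F\leq\gsvlf{N^\circ}^F+(2+\kappa)\cdot\varepsilon. \]
Letting $\varepsilon\to 0$ gives $\gsv M^F\leq\gsvlf{N^\circ}^F$. (That $M$ is closed -- all of $\partial N$ is glued up -- and inherits an orientation of $N$ -- the gluing is orientation reversing on the boundary -- is standard.)

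The step I expect to be the main obstacle is the truncation in the first paragraph: carrying it out rigorously in the abstract language of geometric normed models, and in particular verifying that the truncated chain $z'$ genuinely represents $\fclr{N,\partial N}$ and that $|\partial z'|^F$ can be pushed below any given $\varepsilon$. This is exactly where the \emph{geometric} hypothesis on the model is used, and where the classical collar/truncation analysis for locally finite simplicial volume must be imported into the present generality. The remaining ingredients -- the orientation bookkeeping that pins down the signs of $w_+$ and $F(f)(w_-)$, and the fact that all collar and neck homotopies occurring are norm non-increasing because they are induced by continuous maps -- are routine, but should be verified carefully.
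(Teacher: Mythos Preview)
Your proposal is correct and follows essentially the same route as the paper. The paper isolates your truncation step as a separate proposition (producing relative $F$-fundamental cycles~$c_t$ of~$(N,\partial N)$ with $\lim_t|c_t|^F\leq\gsvlf{N^\circ}^F+\varepsilon$ and $\lim_t|\partial c_t|^F=0$, proved via the stretched model~$N(\infty)$ and the Cauchy condition in the definition of~$|\cdot|^F$ on locally finite chains), then performs the UBC correction on~$B_+$ before pushing forward rather than on~$T\subset M$ afterwards; both variants are equivalent since $p|_{B_+}$ is a homeomorphism onto~$T$.
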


We will explain the notion of geometric normed models in
Section~\ref{subsec:geometricmodels}. In
Section~\ref{subsec:smallboundaries} we will prove that there are
$F$-fundamental cycles of~$N$ whose boundaries have small $F$-norm if
$\gsvlf{N^\circ}^F < \infty$.  Using the uniform boundary condition,
we will then be able to glue these boundaries with small chains
(Section~\ref{subsec:gluingboundaries}).

\subsection{Geometric normed models}\label{subsec:geometricmodels}

\begin{defi}[geometric normed model]
  A functorial normed chain complex~$F \colon \Top \longrightarrow
  \nCh$ is \emph{geometric} if it satisfies the following conditions 
  for all~$n \in \N$ and all spaces~$X$:
  \begin{itemize}
  \item \emph{Compact support.}  For all chains~$c \in F_n(X)$ there exist~$K
    \in K(X)$ and~$z \in F_n(K)$ such that $F_n(K \hookrightarrow
    X)(z) = c$ and $\gclone z^F \leq \gclone c^F$.
  \item \emph{$\pi_0$-Additivity.} 
    If $X = X_1 \sqcup X_2$, then for all chains~$c \in F_n(X)$ there exist~$z_1 \in F_n(X_1)$ 
    and $z_2 \in F_n(X_2)$ such that 
    \begin{align*} 
      c & = F_n(X_1 \hookrightarrow X)(z_1) + F_n(X_2 \hookrightarrow X)(z_2)  
      \\
      \gclone c ^F & = \gclone{z_1}^F + \gclone{z_2}^F.
    \end{align*}
  \item \emph{Faithfulness.} If $A \subset X$, then 
    $F_n(A \hookrightarrow X) \colon F_n(A) \longrightarrow F_n(X)$ 
    is injective.
  \end{itemize}
  
  A normed model~$(F,\varphi,\psi)$ of~$C_*(\args;\R)$ is \emph{geometric} if 
  $F$ is geometric.
\end{defi}

In view of faithfulness, for geometric normed models, we will usually 
omit the explicit notation of homomorphisms induced by inclusions of
subspaces. The faithfulness condition is added for convenience; it
could be replaced with weaker conditions (which, however, would lead
to much more cumbersome notation).

Any faithful normed model on path-connected compact spaces can be extended 
to a geometric model on all spaces by taking $\ell^1$-sums of path-connected 
components and then colimits over compact subspaces.

\begin{prop}\label{prop:classicalgeometric}
  \hfil
  \begin{enumerate}
    \item The singular chain complex~$C_*(\args;\R)$ is geometric.
    \item The cubical singular chain complex~$C_*^\square(\args;\R)$ is geometric.
  \end{enumerate}
\end{prop}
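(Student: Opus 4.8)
The plan is to verify each of the three defining properties of a geometric functorial normed chain complex---compact support, $\pi_0$-additivity, and faithfulness---for $C_*(\args;\R)$ and $C_*^\square(\args;\R)$ separately; in both cases everything reduces to the fact that the chain complexes are \emph{free} on a basis of geometrically concrete objects (singular simplices, respectively non-degenerate singular cubes) equipped with the $\ell^1$-norm, and that this basis behaves well under the relevant topological operations.

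First I would treat the singular chain complex $C_*(\args;\R)$. For compact support: given $c = \sum_{j=1}^k a_j\cdot\sigma_j \in C_n(X;\R)$, the image $K := \bigcup_{j=1}^k \sigma_j(\Delta^n)$ is a compact subspace of $X$ (a finite union of continuous images of the compact set $\Delta^n$), each $\sigma_j$ factors as $\Delta^n \to K \hookrightarrow X$, and the resulting chain $z \in C_n(K;\R)$ has $\clone z = \clone c$ since distinct simplices in $X$ have distinct factorisations through $K$ (the basis elements are not identified). For $\pi_0$-additivity: if $X = X_1 \sqcup X_2$, then every singular simplex $\Delta^n \to X$ has connected image contained entirely in $X_1$ or entirely in $X_2$ (as $\Delta^n$ is connected), so the basis of singular $n$-simplices of $X$ splits as the disjoint union of the bases of $X_1$ and $X_2$; sorting the terms of $c$ accordingly produces $z_1, z_2$ with $\clone c = \clone{z_1} + \clone{z_2}$. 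Faithfulness is immediate: for $A \subset X$, the inclusion sends the basis of singular simplices of $A$ injectively into the basis of singular simplices of $X$, hence is injective on $C_n$.

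Next I would handle $C_*^\square(\args;\R) = Q_*(\args;\R)/D_*(\args;\R)$, keeping in mind that the relevant norm is the $\ell^1$-norm with respect to the basis of \emph{non-degenerate} singular cubes. The same three arguments go through with $\square^n$ in place of $\Delta^n$: for compact support, take $K$ to be the union of the (compact) images of the finitely many non-degenerate cubes appearing in a representative; for $\pi_0$-additivity, use that $\square^n$ is connected so each singular cube has image in exactly one of $X_1$, $X_2$, and this respects degeneracy, so the basis of non-degenerate cubes of $X$ is the disjoint union of those of $X_1$ and $X_2$; for faithfulness, the inclusion $A \hookrightarrow X$ sends non-degenerate cubes of $A$ to non-degenerate cubes of $X$ injectively, and since a cube of $A$ is degenerate iff its composite into $X$ is, the induced map $C_n^\square(A;\R) \to C_n^\square(X;\R)$ is injective on the quotient. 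The one point requiring a moment's care---and the place I would expect the only real subtlety---is the interaction between compact support and the degenerate subcomplex: one must check that a minimal-norm representative can be chosen so that passing to $K$ does not force the norm to increase, which follows because the quotient norm on $C_n^\square(X;\R)$ is exactly the $\ell^1$-norm on non-degenerate cubes and distinct non-degenerate cubes of $X$ remain distinct and non-degenerate when viewed in $K$, so the identification $C_n^\square(K;\R) \to C_n^\square(X;\R)$ is isometric onto its image.

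In short, the proof is a routine bookkeeping exercise: in each degree the chain module is free (after quotienting out degeneracies in the cubical case) on a basis indexed by geometrically localised maps from a connected compact model space, and all three geometricity axioms are formal consequences of this freeness together with the connectedness and compactness of $\Delta^n$ and $\square^n$. No estimate is genuinely hard; the main obstacle, such as it is, is simply being careful that the $\ell^1$-norm used in the cubical case is the one on non-degenerate cubes so that the isometry statements in the compact-support and $\pi_0$-additivity clauses hold on the nose rather than merely up to the projection $Q_* \to C_*^\square$.
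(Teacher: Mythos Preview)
Your proposal is correct and is exactly the routine verification the paper has in mind; the paper's own proof is a one-line remark that the claim ``easily follows from the construction of the singular and the cubical singular chain complex as well as the $\ell^1$-norm.'' Your careful treatment of the cubical case---in particular the observation that the quotient norm on~$C_n^\square$ coincides with the $\ell^1$-norm on non-degenerate cubes, so that factoring through a compact subspace is isometric---is the only point requiring any thought, and you handle it correctly.
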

\begin{proof}
  This easily follows from the construction of the singular and the cubical singular 
  chain complex as well as the $\ell^1$-norm.
\end{proof}

In particular, Theorem~\ref{thm:glue} will hence apply to the locally
finite versions of classical simplicial volume and cubical simplicial
volume.

\subsection{Small boundaries}\label{subsec:smallboundaries}

\begin{prop}\label{prop:smallboundaries}
  Let $(F,\varphi,\psi)$ be a geometric normed model of the singular
  chain complex and let $N$ be an oriented compact 
  $n$-manifold with $\gsvlf {N^\circ}^F < \infty$. Then for
  every~$\varepsilon \in \R_{>0}$ there is a family~$(c_t)_{t \in \R_{>0}}$ 
  in~$F_n(N)$ with the following properties:
  \begin{enumerate}
    \item For all~$t \in \R_{>0}$ we have~$\partial c_t \in
      F_{n-1}(\partial N)$ and $c_t$ represents a relative
      $F$-fundamental cycle of~$N$ in~$F_n(N,\partial N)$.
    \item The family approximates the locally finite $F$-simplicial
      volume of~$N^\circ$ via
      \[ \lim_{t \rightarrow \infty} \gclone{c_t}^F 
         \leq \gsvlf {N^\circ}^F + \varepsilon. 
      \]
    \item The family has small boundaries in the sense that
      \[ \lim_{t \rightarrow \infty} \gclone{\partial c_t}^F = 0. 
      \]
  \end{enumerate}
\end{prop}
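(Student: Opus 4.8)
The plan is to exploit the fact that $\gsvlf{N^\circ}^F < \infty$ to produce a locally finite $F$-fundamental cycle $z$ of $N^\circ$ with $\gclone z^F \leq \gsvlf{N^\circ}^F + \varepsilon$, and then to ``truncate'' it towards a collar of $\partial N$. First I would fix a collar neighbourhood $\partial N \times [0,\infty)$ of the boundary in $N^\circ$ (using the identification of $N^\circ$ with the interior), so that $N^\circ$ is exhausted by the compact submanifolds $N_t$ obtained by cutting off the collar at level $t$; each $N_t$ is diffeomorphic to $N$ with $\partial N_t$ corresponding to $\partial N \times \{t\}$. A locally finite $F$-fundamental cycle $z = (z_K)_{K}$ of $N^\circ$, being locally finite, restricts to a relative $F$-fundamental cycle $z^{(t)} \in F_n(N_t, \partial N_t)$ for each $t$; transporting back along the diffeomorphism $N_t \cong N$ and using faithfulness to suppress the inclusion maps, this gives a candidate family $(c_t)_{t}$ in $F_n(N)$ with $\partial c_t \in F_{n-1}(\partial N)$ and $[c_t] = $ the relative $F$-fundamental class, which settles property~(1).

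For property~(2), the point is that the restriction of a Cauchy family representing $z$ to the compact set $N_t$ has $F$-norm at most $\lim_{K} \gclone{\overline z_K}^F \leq \gclone z^F + \varepsilon'$ (up to an arbitrarily small error coming from the infimum in the definition of the locally finite $F$-norm), and the compact-support axiom of a geometric normed model lets us realise $c_t$ genuinely on the compact piece $N_t$ without increasing its norm; hence $\limsup_{t \to \infty} \gclone{c_t}^F \leq \gsvlf{N^\circ}^F + \varepsilon$ after absorbing the auxiliary errors into $\varepsilon$. For property~(3), the key observation is that $\partial c_t$ is the ``slice'' of $z$ at level $t$, i.e. essentially $z_{N_t} - z_{N_{t'}}$ for $t < t'$ read off at the boundary, so $\gclone{\partial c_t}^F$ is controlled by the $F$-norm of the part of $z$ supported in the collar region beyond level $t$. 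Since $z$ is a \emph{fixed} locally finite cycle with finite total norm and the collar regions $\partial N \times [t,\infty)$ shrink as $t \to \infty$, the mass of $z$ in these tails tends to $0$; here $\pi_0$-additivity is what allows us to genuinely split off the collar tail as a chain of its own and bound its contribution. Making this ``the mass in the tail goes to zero'' precise is where the Cauchy-family formulation of the locally finite norm must be used carefully: one chooses a Cauchy representative $(\overline z_K)_K$ nearly realising $\gclone z^F$, and for $t' > t$ large the difference $\overline z_{N_{t'}} - \overline z_{N_t}$ is a chain supported in the collar with norm $\leq \delta$ (Cauchyness), whose boundary contribution at level $t$ is what bounds $\gclone{\partial c_t}^F$.

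The main obstacle I anticipate is precisely this last step: extracting a \emph{uniform} smallness statement for $\gclone{\partial c_t}^F$ from the Cauchy condition, rather than just a statement along a subsequence. A Cauchy family only guarantees that tails become small eventually, so one gets a sequence $t_k \to \infty$ with $\gclone{\partial c_{t_k}}^F \to 0$; upgrading to a genuine limit over all $t \to \infty$ requires interpolating the construction monotonically in $t$, which is why the statement is phrased for a \emph{family} $(c_t)_{t \in \R_{>0}}$ and the conclusions are stated as limits --- I would build $c_t$ for $t \in [t_k, t_{k+1})$ by a fixed truncation adapted to the $k$-th stage of the Cauchy family, so that the bounds degrade controllably and still converge. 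A secondary technical point is checking that the truncation-at-level-$t$ operation can be carried out \emph{within the normed model $F$} with the norm control advertised; this is exactly what the three axioms of a geometric normed model (compact support, $\pi_0$-additivity, faithfulness) are designed to supply, and the proof will invoke them at the three places indicated above. Everything else --- the collar, the exhaustion, the identification of $\partial c_t$ with a slice of $z$ --- is routine and parallels the classical argument for $\svlf{\cdot}$ referenced in Proposition~\ref{prop:vanishing}.
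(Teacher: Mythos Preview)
Your approach is essentially the paper's: work on the stretched manifold~$N(\infty) = N \cup_{\partial N} (\partial N \times [0,\infty)) \cong N^\circ$, exhaust by~$N(t)$, choose a near-optimal locally finite $F$-fundamental cycle together with a Cauchy representative~$(\overline c_{N(t)})_t$, and set $c_t := F_n(p_t)(\overline c_{N(t)})$ where $p_t$ collapses the cylinder. Two small corrections will streamline your write-up. First, the obstacle you anticipate for property~(3) is not there: the Cauchy condition is already uniform, so once you have the bound $\gclone{\partial c_t}^F \leq \|\partial_n^F\| \cdot \gclone{\overline c_{N(t')} - \overline c_{N(t)}}^F$ for suitable~$t' > t$, you get $\lim_{t\to\infty}\gclone{\partial c_t}^F = 0$ directly, with no subsequences or interpolation needed. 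Second, compact support is used in step~(3) rather than step~(2): for~(2) one only needs $\gclone{c_t}^F \leq \gclone{\overline c_{N(t)}}^F$ (since $F(p_t)$ has norm~$\leq 1$), whereas for~(3) compact support produces~$t' > t+1$ with $\overline c_{N(t)} \in F_n(N(t'-1))$, so that $\partial \overline c_{N(t)}$ and $\partial \overline c_{N(t')}$ live in the disjoint pieces~$N(t'-1)$ and $N(\infty)\setminus N(t')$; faithfulness and $\pi_0$-additivity then give $\gclone{\partial \overline c_{N(t)}}^F \leq \gclone{\partial(\overline c_{N(t')} - \overline c_{N(t)})}^F$, which is the inequality you were reaching for.
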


The proof is a straightforward adaption of the corresponding argument for
ordinary simplicial volume~\cite[p.~17, Chapter~6]{vbc,loehphd}.

We first introduce some notation: Let $N$ be a compact manifold. Then
$N^\circ = N \setminus \partial N$ is
homeomorphic~\cite{brownflat,connelly} to the stretched manifold
(Figure~\ref{fig:stretch})
\[ N(\infty) := N \cup_{\partial N} (\partial N \times [0,\infty)). 
\]
For~$t \in [0,\infty)$ we write
\[ N(t) := N \cup_{\partial N} (\partial N \times [0,t]) \subset N(\infty), 
\]
which is homeomorphic (relative to the boundary) to~$N$. 
Furthermore, the homotopy equivalences that collapse the
cylinder~$\partial N \times [0,\infty)$ to~$\partial N \times \{0\}$
  are denoted by
\[ p_t \colon \bigl(N(\infty), N(\infty) \setminus N(t)\bigr) 
       \longrightarrow (N,\partial N). 
\]
Clearly, the family~$(N(t))_{t \in \R_{>0}}$ is cofinal in the
directed set~$K(N(\infty))$.

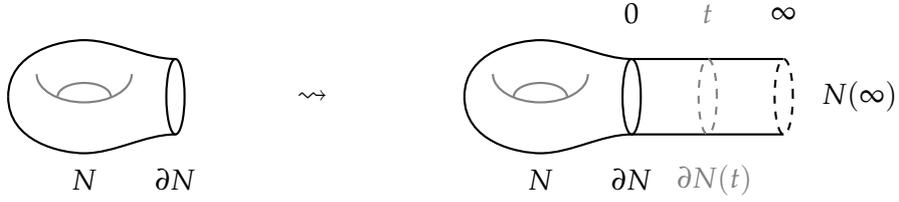
\begin{figure}
  \begin{center}
    \def\torushole{%
      \draw[black!50] (0.625,0.3) arc (360:180:0.625 and 0.375);
      \draw[black!50] (0.35,0) arc (0:180:0.35 and 0.1825);
    }
    \def\surfacearc{%
              \draw (-1,0)   .. controls +(90:0.6) and +(180:0.3) 
              .. (0,0.75) .. controls +(0:0.4)  and +(180:0.4)
              .. (1.2,0.5);
              \draw (-1,0)   .. controls +(-90:0.6) and +(180:0.3) 
              .. (0,-0.75) .. controls +(0:0.4)  and +(180:0.4)
              .. (1.2,-0.5);
            }
    \def\surfaceboundary{%
      \draw (1.2,0) circle (0.12 and 0.5);
    }
    \def\mfdn{%
      \torushole
      \surfacearc
      \surfaceboundary
      \draw (0,-1.1) node {$N$};
      \draw (1.2,-1.1) node {$\partial N$};
    }
    \begin{tikzpicture}[thick]
      \mfdn
      \draw (3,0) node {$\rightsquigarrow$};
      \begin{scope}[shift={(6,0)}]
        \mfdn
        \draw (1.2,0.5) -- (3.2,0.5);
        \draw (1.2,-0.5) -- (3.2,-0.5);
        \draw (1.2,1.1) node {$0$};
        \draw (3.2,1.1) node {$\infty$};
        \begin{scope}[dashed, shift={(1,0)},black!50]
          \surfaceboundary
          \draw (1.2,1.1) node {$t$};
          \draw (1.3,-1.1) node {$\partial N(t)$};
        \end{scope}
        \begin{scope}[dashed, shift={(2,0)}]
          \surfaceboundary
        \end{scope}
        \draw (4.2,0) node {$N(\infty)$};
      \end{scope}
    \end{tikzpicture}
  \end{center}

  \caption{Stretching a manifold with boundary}
  \label{fig:stretch}
\end{figure}

\begin{proof}[Proof of Proposition~\ref{prop:smallboundaries}]
  Because $N^\circ$ is homeomorphic to~$N(\infty)$ we also have $\gsvlf{N(\infty)}^F < \infty$. 
  Let $c \in F^{\mathrm{lf}}_n(N(\infty))$ be a locally finite
  $F$-fundamental cycle of~$N(\infty)$ with $\gclone c ^F \leq
  \gsvlf{N(\infty)}^F + 1/2 \cdot \varepsilon < \infty$. In particular, $A(c) \neq
  \emptyset$ and there exists~$\overline c \in A(c)$ with 
  \[ 
     \lim_{t \rightarrow \infty} \gclone{\overline c_{N(t)}}^F 
     \leq \gclone c ^F + \frac12\cdot\varepsilon
     \leq \gsvlf {N(\infty)}^F + \varepsilon.
  \]
  For~$t \in \R_{>0}$ we now set
  \[ c_t := F_n(p_t)(\overline c_{N(t)}) \in F_n(N). 
  \]
  By construction, $c_t$ is a chain with~$\partial c_t \in
  F_{n-1}(\partial N)$ that represents a relative $F$-fundamental
  cycle of~$(N,\partial N)$ in~$F_n(N,\partial N)$. Moreover,
  $\gclone{c_t}^F \leq \gclone{\overline c_{N(t)}}^F$, and so
  $(c_t)_{t \in \R_{>0}}$ satisfies also the second claim.

  We now prove the third claim for this family: By definition
  of~$A(c)$, the family~$(\overline c_{N(t)})_{t \in \R_{>0}}$ is
  $\gclone\cdot^F$-Cauchy. Because $F$ has compact supports, there is
  a~$t' \in \R_{>t + 1}$ such that $\overline c_{N(t)} \in
  F_n(N(t'-1))$. By construction,
  \begin{align*}
    \partial (\overline c_{N(t')}) & \in F_{n-1}\bigl(N(\infty) \setminus N(t')\bigr), 
    \\
    \partial (\overline c_{N(t)}) & \in F_{n-1}\bigl(N(t'-1)\bigr), 
    \\
    \partial( \overline c_{N(t')} - \overline c_{N(t)})
     = \partial \overline c_{N(t')} - \partial \overline c_{N(t)}
     & \in F_{n-1}\bigl(N(\infty) \setminus N(t') \sqcup N(t'-1)\bigr).
  \end{align*}
  Therefore, faithfulness and $\pi_0$-additivity of~$F$ show that
  \[      \gclone{\partial c_t}^F
     \leq \gclone{\partial(\overline c_{N(t)})}^F 
     \leq \gclone{\partial( \overline c_{N(t')} - \overline c_{N(t)})}^F 
     \leq \|\partial_n^F\| \cdot \gclone{\overline c_{N(t')} - \overline c_{N(t)}}^F.
  \]
  Because of the Cauchy condition, the last term tends to~$0$ for~$t
  \rightarrow \infty$.
\end{proof}

\subsection{Gluing along UBC boundaries}\label{subsec:gluingboundaries}

We will now glue the small boundaries provided by
Proposition~\ref{prop:smallboundaries} to obtain efficient fundamental
cycles of the glued manifold.

\begin{proof}[Proof of Theorem~\ref{thm:glue}]
  If $\gsvlf {N^\circ}^F = \infty$, then there is nothing to prove. We
  will hence assume that $\gsvlf {N^\circ}^F$ is finite. Let
  $\varepsilon > 0$ and let $\kappa \in \R_{>0}$ be a common 
  $(n-1)$-UBC constant for all boundary components of~$N$ with respect
  to~$F$. Because $F$ is geometric, we can choose a family~$(c_t)_{t
    \in \R_{>0}}$ as in Proposition~\ref{prop:smallboundaries}.

  Let $t \in \R_{>0}$. Then $\partial c_t \in F_{n-1}(\partial N)$ is 
  an $F$-fundamental cycle of~$\partial N = B_+ \sqcup B_-$. Because $F$ 
  is assumed to be faithful and $\pi_0$-additive, we can split
  \[ \partial c_t = b_{t,+} + b_{t,-} 
  \]
  into $F$-fundamental cycles~$b_{t,+} \in F_{n-1}(B_+)$, $b_{t,-} \in
  F_{n-1}(B_-)$ with
  \[ \gclone{b_{t,+}}^F + \gclone{b_{t,-}}^F = \gclone {\partial c_t}^F. 
  \]
  Moreover, because $f \colon B_- \longrightarrow B_+$ is an
  orientation reversing homeomorphism, the cycle~$w_t := b_{t,+} + F_{n-1}(f)
  (b_{t,-}) \in F_{n-1}(B_+)$ is a boundary. Therefore, the 
  uniform boundary condition guarantees the existence of 
  a chain~$b_t \in F_{n}(B_+)$ satisfying
  \[ \partial b_t = w_t
     \quad\text{and}\quad
     \gclone{b_t}^F \leq \kappa \cdot \gclone{w_t}^F 
                   \leq \kappa \cdot \gclone{\partial c_t}^F.
  \]

  We now consider the gluing projection~$\pi \colon N \longrightarrow
  M$ and the chain
  \[ z_t := F_n(\pi) (c_t - b_t) \in F_n(M). 
  \]
  By construction, $\partial z_t = 0$ and, as can be easily
  checked locally at points in~$N^\circ$, the cycle~$z_t$ is an 
  $F$-fundamental cycle of~$M$. Moreover, 
  \[ \gclone{z_t}^F 
                   \leq \gclone{c_t}^F + \kappa \cdot \gclone{\partial c_t}^F,
  \]
  and thus the properties from Proposition~\ref{prop:smallboundaries} imply 
  \[ \gsv M ^F \leq 
     \liminf_{t\rightarrow\infty} \gclone{z_t}^F
     \leq \gsvlf{N^\circ}^F + \varepsilon + \kappa \cdot 0.
  \]
  Taking $\varepsilon \rightarrow 0$ gives the desired estimate~$\gsv M ^F \leq \gsvlf{N^\circ}^F$.
\end{proof}

Alternatively, one could also try to translate the equivalence
theorem~\cite{vbc,BBFIPP} for weighted semi-norms to the setting of
normed models. However, we prefer the argument above because it is
more direct and more geometric.

\section{Cubical simplicial volume of $3$-manifolds}\label{sec:proof}

In this section, we complete the proof of Theorem~\ref{mainthm},
using the strategy described in the introduction. We review the
decomposition of $3$-manifolds in Section~\ref{subsec:3mfddecomp} and 
then prove Theorem~\ref{mainthm} in Section~\ref{subsec:proof}. 

\subsection{Decomposition of $3$-manifolds}\label{subsec:3mfddecomp}

We recall the Geometrization Theorem describing the decomposition 
of a $3$-manifold in hyperbolic and Seifert pieces.

\begin{defi}[irreducible $3$-manifold]
A $3$-manifold $M$ is called \emph{irreducible} if every embedded $2$-sphere in $M$ bounds an embedded $3$-ball in $M$.
\end{defi}
\begin{rem}
It is straightforward that an orientable irreducible $3$-manifold cannot be decomposed as a non-trivial connected sum of two manifolds.
\end{rem}
\begin{defi}[incompressible surface]\label{incompressible}
Let $S$ be a connected orientable surface different from a sphere or a disk that is properly embedded into a compact orientable $3$-manifold $M$. Then $S$ is
\emph{incompressible} if the map
$$i_\ast\colon \pi_1(S,x)\rightarrow \pi_1(M,x),$$ induced by the inclusion, is injective. Here, \emph{properly} embedded surface means that $\partial S=S\cap \partial M$. 
\end{defi}


\begin{thm}[Geometrization Theorem]\label{thm:geom}
Let $M$ be a compact orientable irreducible $3$-manifold with empty or toroidal boundary. There exists a (possibly empty) collection of disjointly embedded
incompressible tori $T_1,\dots, T_m$ in $M$ such that each component of $M$
cut along $T_1\cup\dots\cup T_m$ is hyperbolic or Seifert fibred. Furthermore, 
any such collection of tori with a minimal number of components is unique up to isotopy.
\end{thm}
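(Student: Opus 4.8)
The plan is to assemble Theorem~\ref{thm:geom} from two deep, independently established inputs: the JSJ (Jaco--Shalen--Johannson) torus decomposition, which supplies the canonical family of tori together with the uniqueness clause, and Thurston's hyperbolization theorem complemented by Perelman's geometrization, which identifies the complementary pieces geometrically.

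First I would invoke the JSJ decomposition: for a compact orientable irreducible $3$-manifold $M$ with empty or toroidal boundary there is a collection $T_1,\dots,T_m$ of disjointly embedded incompressible tori, and among all such collections with a minimal number of components this one is unique up to isotopy, such that every component of $M$ cut along $T_1\cup\dots\cup T_m$ is either Seifert fibred or \emph{atoroidal} (contains no essential embedded torus which is not boundary-parallel). Existence is a cut-and-paste argument: one takes a maximal system of pairwise non-parallel essential tori, invokes Haken's finiteness theorem to see that such a system is finite, and then removes redundant tori; minimality together with the exchange and innermost-disk arguments of Jaco--Shalen and of Johannson yield uniqueness up to isotopy. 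Because $M$ is irreducible and the $T_i$ are incompressible, each resulting piece is itself irreducible with incompressible toroidal boundary, which is exactly the hypothesis needed to feed it into the geometrization step.

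Second, I would geometrize each atoroidal piece $P$. If $P$ is Haken, this is Thurston's hyperbolization theorem for Haken manifolds: an atoroidal Haken $3$-manifold with (possibly empty) incompressible toroidal boundary that is not one of the finitely many small Seifert-fibred exceptions admits a complete finite-volume hyperbolic metric on its interior; the exceptional cases are Seifert fibred and hence already covered by the other alternative. For the atoroidal pieces that are not Haken one appeals instead to Perelman's proof of Thurston's Geometrization Conjecture via Ricci flow with surgery, which again gives that an irreducible, atoroidal, non-Seifert-fibred closed or cusped $3$-manifold carries a finite-volume hyperbolic structure. Combining the two cases, every component of $M$ cut along $T_1\cup\dots\cup T_m$ is hyperbolic or Seifert fibred, as asserted.

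The entire difficulty is concentrated in the second step. Thurston's route --- induction along a Haken hierarchy, at each stage gluing hyperbolic structures by means of the deformation theory of Kleinian groups, the double limit theorem for pseudo-Anosov gluings, the bounded-image (skinning map) theorem, and a fixed-point argument --- and Perelman's route --- control of singularity formation for the Ricci flow, finite-time extinction in the relevant cases, and the analysis of the thick and thin parts under the long-time flow --- are both extremely substantial, and there is no short proof; in a paper on simplicial volume one simply quotes them. The JSJ step, by comparison, is elementary, using only classical $3$-manifold surgery and Haken finiteness. For our purposes only the \emph{statement} of Theorem~\ref{thm:geom} is needed, so we take it as known.
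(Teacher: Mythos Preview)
Your proposal is correct: the paper does not attempt to prove Theorem~\ref{thm:geom} at all but simply cites the literature (Aschenbrenner--Friedl--Wilton) for background and references, which is exactly the stance you arrive at in your final paragraph. Your sketch of the JSJ decomposition combined with Thurston's hyperbolization for Haken pieces and Perelman's geometrization for the non-Haken atoroidal pieces is the standard route to the statement and is accurate as an outline; since the paper treats the theorem as a black box, there is nothing further to compare.
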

For historical background and detailed references of this statement we refer to the literature~\cite[Chapter 1.7]{AFW}. 

\subsection{Generalised simplicial volume of closed $3$-manifolds and proof of Theorem~\ref{mainthm}}\label{subsec:proof}

We first formulate and prove a slight generalisation of
Theorem~\ref{mainthm} in the context of normed
models. Theorem~\ref{mainthm} will then be a special case. 

\begin{thm}\label{mainthmgen}
  Let $(F,\varphi,\psi)$ be a geometric normed model of~$C_*(\args;\R)$ and let 
  \begin{align*} 
    C_F := \sup \Bigl\{ \frac{v_3^\triangle \cdot \gsvlf N ^F}{ \vol(N)} 
              \Bigm| 
              & \;\text{$N$ is a complete oriented connected hyperbolic $3$-manifold}
              \\
              & \;\text{of finite volume}
              \Bigr\}.
  \end{align*}
  Then $C_F \leq \|\psi_3^{\Delta^3}\|$ and for all oriented closed
  $3$-manifolds~$M$ we have 
  \[ \gsv M^F \leq C_F \cdot \sv M.
  \]
\end{thm}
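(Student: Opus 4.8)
The plan is to combine the Geometrization Theorem (Theorem~\ref{thm:geom}) with the three pillars established earlier: vanishing for Seifert pieces (Corollary~\ref{cor:seifert}), the hyperbolic bound built into the definition of~$C_F$, and sub-additivity under gluing along tori (Theorem~\ref{thm:glue}). First I would reduce to the case where $M$ is connected (both sides are additive under connected sums of closed $3$-manifolds: the right-hand side by Theorem~\ref{thm:additivity}, and for the left-hand side one uses that $S^2$ has amenable fundamental group together with the gluing/additivity machinery, or simply Proposition~\ref{prop:equivalence} plus the classical fact), and also to the case where $M$ is aspherical/irreducible — here I split off the $S^2 \times S^1$ and simply-connected summands, which are geometric (Seifert), so they contribute $0$ to both sides by Corollary~\ref{cor:seifert} and Theorem~\ref{thm:sv3}. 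After this reduction we may assume $M$ is a closed orientable irreducible $3$-manifold with empty boundary, so Theorem~\ref{thm:geom} applies with toroidal cutting surfaces.

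Next I would cut $M$ along the canonical family of incompressible tori $T_1,\dots,T_m$ to obtain a compact $3$-manifold $N$ (a disjoint union of pieces) with toroidal boundary, each component of which is either hyperbolic or Seifert fibred; moreover $M$ is recovered from $N$ by gluing boundary tori in pairs via orientation-reversing homeomorphisms. Since all boundary components of $N$ are tori, $N$ has UBC boundary, so Theorem~\ref{thm:glue} (applied iteratively, one pair of tori at a time, or in one step with $\partial N = B_+ \sqcup B_-$) yields
\[ \gsv M ^F \leq \gsvlf{N^\circ}^F = \sum_{j} \gsvlf{N_j^\circ}^F, \]
where the last equality is $\pi_0$-additivity of the locally finite $F$-simplicial volume over the components $N_j$ of $N$ (which follows from the geometric hypotheses on $F$, or can be taken as a small separate lemma). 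For each Seifert component $N_j$ we have $\gsvlf{N_j^\circ}^F = 0$ by Corollary~\ref{cor:seifert}. For each hyperbolic component $N_j$, by definition of $C_F$ we have $\gsvlf{N_j^\circ}^F \leq C_F \cdot \vol(N_j^\circ)/v_3^\triangle$. Summing over the hyperbolic pieces and invoking Theorem~\ref{thm:sv3}, $\sv M = \sum_j \vol(N_j^\circ)/v_3^\triangle$, gives $\gsv M ^F \leq C_F \cdot \sv M$.

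Finally, the bound $C_F \leq \|\psi_3^{\Delta^3}\|$ is immediate: for any complete oriented connected hyperbolic $3$-manifold $N$ of finite volume, Proposition~\ref{prop:equivalence}(2) gives $\gsvlf{N^\circ}^F \leq \|\psi_3^{\Delta^3}\| \cdot \svlf{N^\circ}$, and $\svlf{N^\circ} = \vol(N^\circ)/v_3^\triangle$ by Theorem~\ref{thm:svhyp}; dividing through and taking the supremum yields the claim.

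The main obstacle I anticipate is the bookkeeping around the reduction to the irreducible case and the iterated application of Theorem~\ref{thm:glue}: one must check that cutting along the incompressible tori really does present $M$ as a gluing of a manifold $N$ with UBC boundary along an orientation-reversing identification of paired boundary tori, and that the gluing theorem can be applied repeatedly (or that its proof adapts to gluing several pairs of tori at once); one also needs $\pi_0$-additivity of $\gsvlf{\cdot}^F$ over connected components, which relies on the geometric hypotheses but should be routine. The hyperbolic and Seifert inputs, by contrast, are essentially quoted wholesale from the earlier sections.
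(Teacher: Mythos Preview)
Your proposal is correct and follows essentially the same route as the paper: decompose via Geometrization, kill the Seifert pieces with Corollary~\ref{cor:seifert}, bound the hyperbolic pieces by the definition of~$C_F$, assemble via Theorem~\ref{thm:glue}, and read off the total using Theorem~\ref{thm:sv3}; the bound~$C_F \leq \|\psi_3^{\Delta^3}\|$ is obtained exactly as you suggest from Proposition~\ref{prop:equivalence} and Theorem~\ref{thm:svhyp}. The only difference is that the paper invokes Theorem~\ref{thm:geom} directly on~$M$ without an explicit prime-decomposition step, whereas you spell out the reduction to the irreducible case---your extra care here is justified (the paper is tacitly absorbing that step), and the obstacles you flag (iterated gluing, $\pi_0$-additivity of~$\gsvlf{\cdot}^F$) are indeed routine given the geometric hypotheses on~$F$.
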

\begin{proof}
  Because $\svlf M = \vol(M)/v_3^\triangle$ holds for all complete hyperbolic
  $3$-manifolds of finite volume (Theorem~\ref{thm:svhyp}), we
  obtain~$C_F \leq \|\psi_3^{\Delta^3}\|$ from
  Proposition~\ref{prop:equivalence}. 

  By the Geometrization Theorem~\ref{thm:geom}, there exist $m, n \in
  \N$ and disjointly embedded incompressible tori~$T_1,\dots,T_m$ in~$M$
  such that each of the components~$N_1,\dots, N_n$ obtained by
  cutting~$M$ along~$T_1 \cup \dots \cup T_m$ is Seifert fibred or
  admits a complete finite volume hyperbolic structure on its interior. 
  
  Let $S \subset \{1,\dots,n\}$ be the set of indices belonging to the
  Seifert fibred pieces (whence the indices in~$H := \{1,\dots,n\}
  \setminus S$ belong to the hyperbolic pieces).  By
  Corollary~\ref{cor:seifert}, the definition of~$C_F$, and
  Theorem~\ref{thm:svhyp} we obtain
  \[ \gsvlf{N_j^\circ}^F \leq 
  \begin{cases}
     0 & \text{if $j \in S$}\\
     C_F \cdot \frac{\vol(N_j^\circ)}{v_3^\triangle} & \text{if $j \in H$}
   \end{cases}
  \]
  for all~$j \in \{1,\dots,n\}$. All these values are finite.  
  Therefore, we can apply Theorem~\ref{thm:glue} and obtain
  \begin{align*}
    \gsv M ^F& 
    \leq \gsvlf{N_1^\circ \sqcup \dots \sqcup N_n^\circ}^F
    = \sum_{j=1}^n \gsvlf{N_j^\circ}^F 
    \leq C_F \cdot \sum_{j \in H} \frac{\vol(N_j^\circ)}{v_3^\triangle}.
  \end{align*}
  The last sum is equal to~$\sv M$ by Theorem~\ref{thm:sv3}. Thus, 
  $\gsv M^F \leq C_F \cdot \sv M$, as claimed.
\end{proof}

As a special case, we obtain Theorem~\ref{mainthm}: 

\begin{proof}[Proof of Theorem~\ref{mainthm}]
  Let $M$ be an oriented closed $3$-manifold. In view of the lower
  bound established in Corollary~\ref{cor:3lowerbound}, it suffices to
  prove the upper bound~$\qsv M \leq 1/5 \cdot \sv M$.

  By Proposition~\ref{prop:classicalgeometric}, all cubical
  models~$(F,\varphi,\psi)$ are geometric, and so
  Theorem~\ref{mainthmgen} applies to cubical simplicial volume. On
  the other hand, Corollary~\ref{cor:hyp3} tells us that
  $C_F = 1/5$. Therefore, 
  $\qsv M \leq 1/5 \cdot \sv M$.
\end{proof}



\begin{thebibliography}{100}

  \bibitem{AFW}
    M.~Aschenbrenner, S.~Friedl, H.~Wilton.
    \emph{$3$-Manifold Groups},  book preprint, arXiv:1205.0202, 2012.


  \bibitem{benedettipetronio} 
    R.~Benedetti, C.~Petronio.
    \emph{Lectures on Hyperbolic Geometry}, Universitext,
    Springer, 1992.

  \bibitem{brownflat} 
    M.~Brown. 
    Locally flat imbeddings of topological manifolds, 
    \emph{Ann.\ of Math.}, 75, pp.~331--341, 1962.    

  \bibitem{bucher}
    M.~Bucher-Karlsson.
    The simplicial volume of closed manifolds covered by~$\mathbb{H}^2 \times \mathbb{H}^2$, 
    \emph{J.~Topol.}, 1~(3), pp.~584--602, 2008.

	\bibitem{BBI}
	M.~Bucher, M.~Burger, A.~Iozzi.	
	A dual interpretation of the Gromov--Thurston proof of Mostow rigidity and volume rigidity for representations of
	hyperbolic lattices, \emph{Trends in Harmonic Analysis, Springer-Verlag Italia}, pp.~47--76, 2013.

  \bibitem{BBFIPP}
    M.~Bucher, M.~Burger, R.~Frigerio, A.~Iozzi, C.~Pagliantini, M.B.~Pozzetti.
    Isometric embeddings in bounded cohomology,
    \emph{J. Topol. Anal.}, 6~(1), pp.~1--25, 2014.

  \bibitem{BFP}
    M.~Bucher, R.~Frigerio, C.~Pagliantini.
    The simplicial volume of $3$-manifolds with boundary,
    \emph{J. Topology}, 8, pp.~457--475, 2015.

  \bibitem{connelly} 
    R.~Connelly. 
    A new proof of Brown's collaring theorem, 
    \emph{Proc.\ Amer.\ Math.\ Soc.}, 27, pp.~180--182, 1971. 

	\bibitem{coxeter}
	H.S.M.~Coxeter.
	The functions of Schl\"afli and 	   Lobatschefsky, 
	\emph{Quart. J. Math. Oxford}, 6, pp.~13--29, 1935.



  \bibitem{eilenberg}
    S.~Eilenberg, S.~MacLane.
    Acyclic models, 
    \emph{Amer.~J.\ Math.}, 75, pp.~189--199, 1953.

	\bibitem{Fra}
	S.~Francaviglia. 
	Hyperbolic volume of representations of fundamental groups ofcusped 3-manifolds, 
	\emph{Int. Math. Res. Not.} 9,pp.~425--459, 2004.


	\bibitem{FP}
	R.~Frigerio, C.~Pagliantini.
	The simplicial volume of hyperbolic manifolds with geodesic 	boundary.	
	\emph{Algebr. Geom. Topol. 10}, pp. 979-1001,~2010.	
 
 	\bibitem{FM}
	K.~Fujiwara, J.K.~Manning.
	Simplicial volume and fillings of hyperbolic manifolds, \emph{Algebr. Geom. Topol.} 11, pp.~2237--2264, 2011.
 
 
 
 \bibitem{vbc} M.~Gromov.
    Volume and bounded cohomology,
    \emph{Inst.\ Hautes \'Etudes Sci.\ Publ.\ Math.}, 56, 	pp.~5--99, 1983.

  \bibitem{gromov} 
    M.~Gromov.
    \emph{Metric Structures for Riemannian and Non-Riemannian Spaces}, 
    with appendices by M.~Katz, P.~Pansu, and S.~Semmes, translated 
    by S.~M.~Bates. \emph{Progress in 
      Mathematics}, 152, Birkh\"auser, 1999. 

  \bibitem{HM}
   U.~Haagerup, H.J.~Munkholm.
   Simplices of maximal volume in hyperbolic $n$-space,
   \emph{Acta Math.}, 147, pp.~1--11, 1981.


\bibitem{KimKim}
	S.~Kim, I.~Kim.
	Simplicial volume of $\Q$-rank one locally symmetric manifolds covered by the product of 
        $\R$-rank one symmetric spaces. 
        \emph{Algebraic and Geometric Topology}, 12, no.~2, pp.~1165--1181, 2012.

  \bibitem{kuessnerthesis}
    T.~Kuessner.
    \emph{Relative Simplicial Volume},
    PhD~thesis, Universit\"at T\"ubingen, 2001.

  \bibitem{loehl1}
    C.~L\"oh.
    Isomorphisms in $\ell^1$-homology,
    \emph{M\"unster J.~Math.}, 1, pp.~237--266, 2008.

  \bibitem{loehphd}
    C.~L\"oh.
    \emph{$\ell^1$-Homology and Simplicial Volume}, PhD~thesis, 
    WWU~M\"unster, 2007. Available online at 
    \textsf{http://nbn-resolving.de/urn:nbn:de:hbz:6-37549578216}

  \bibitem{mapsimvol} 
    C.~L\"oh.
    Simplicial Volume,
    \emph{Bull.\ Man.\ Atl.}, pp. 7--18, 2011

  \bibitem{loehplankl}
    C.~L\"oh, C.~Plankl.
    Cubical simplicial volume of surfaces,
    unpublished note, 2014.

  \bibitem{loehsauer}
    C.~L\"oh, R.~Sauer.
    Degree theorems and Lipschitz simplicial volume for non-positively curved manifolds of finite volume.
    \emph{Journal of Topology}, 2, pp.~193--225, 2009. 


	\bibitem{ma}	
	P.~S.~Mara.	
	Triangulations for the cube.
	\emph{J. Combin. Theory} (A), 20, pp.~170--177, 1976.



  \bibitem{masseyhom}
    W.S.~Massey.
    \emph{A Basic Course in Algebraic Topology}.  
    Graduate Texts in Mathematics, 127, Springer, 1991.

  \bibitem{mm}
    S.~Matsumoto, S.~Morita.
    Bounded cohomology of certain groups of homeomorphisms, 
    \emph{Proc.\ Amer.\ Math.\ Soc.}, 94~(3), pp.~539--544, 1985. 

  \bibitem{pey}
    N.~Peyerimhoff.
    Simplices of maximal volume or minimal total edge length in hyperbolic space, 
    \emph{J.~London Math. Soc.} 2, 66, pp.~753--768, 2002.

  \bibitem{ratcliffe}
    J.G.~Ratcliffe. 
    \emph{Foundations of Hyperbolic Manifolds}, Graduate Texts in Mathematics,
    149, Springer, 1994.

	\bibitem{smi}
	W.D.~Smith.
	A lower bound for the simplexity of the $n$-cube via hyperbolic volumes.
	\emph{Europ. J.~Combinatorics} 21, pp.~131--137, 2000.	
	
  \bibitem{soma}
    T.~Soma.
    The Gromov invariant of links,
    Invetiones math. 64, pp.~445--454, 1981.

  \bibitem{thurston}
    W.~P.~Thurston.
    \emph{The Geometry and Topology of $3$-Manifolds}, 
    mimeographed notes, 1979.       
    Available online at
    \textsf{http://www.msri.org/publications/books/gt3m}.

\enlargethispage{1cm} 

  \bibitem{waeber}
    S.~Waeber.
    \emph{Kubisches simpliziales Volumen}, 
    Master thesis, Universit\"at Regensburg, 2015.

\end{thebibliography}
\end{document}